\def\B{{\boldsymbol B}}
\def\bb{\boldsymbol{b}}
\def\Be{{\boldsymbol B}^{\rm even}}
\def\Bo{{\boldsymbol B}^{\rm odd}}
\def\nice{\displaystyle}
\def\End{{\rm End}}
\def\oM{\overline{\mathcal{M}}}
\def\cM{{\mathcal{M}}}
\def\Z{\mathbb{Z}}
\def\C{\mathbb{C}}
\def\Q{\mathbb{Q}}
\def\d{\partial}
\def\qed{{\hfill $\Diamond$}}
\def\b1{{\mathbf 1}}
\def\hd{{\widehat \partial}}
\def\hhd{{\widetilde{\partial}}}
\def\Aut{{\rm Aut}}
\def\F{{\mathsf{F}}}
\def\HH{\mathcal{H}}
\def\W{\mathsf{W}}
\def\D{\mathsf{D}}
\def\slc{{\mathsf{sl}}_2(\mathbb{C})}
\def\sl{{\mathsf{sl}}}
\def\blangle{\big\langle}
\def\brangle{\big\rangle}
\def\Blangle{\Big\langle}
\def\Brangle{\Big\rangle}
\newtheorem{theorem}{Theorem}
\newtheorem{proposition}{Proposition}[section]
\newtheorem{lemma}[proposition]{Lemma}
\newtheorem{conjecture}[proposition]{Conjecture}
\theoremstyle{definition}
\newtheorem{definition}[proposition]{Definition}
\newtheorem{example}[proposition]{Example}
\newtheorem{remark}[proposition]{Remark}
\title{Tautological relations via $r$-spin structures}
\author{R. Pandharipande, A. Pixton, D. Zvonkine}
\date{February 2019}
\begin{document}

\maketitle

\begin{abstract}
Relations among tautological classes on $\oM_{g,n}$
are obtained via the study of Witten's $r$-spin theory
for higher $r$. In order to calculate the quantum product,
a new formula relating the $r$-spin correlators in genus 0
to the representation theory of $\slc$ is proven.
The Givental-Teleman classification of CohFTs is used
at two special semisimple points of the associated
Frobenius manifold. At the first semisimple point, the
$R$-matrix is exactly solved in terms of hypergeometric
series. As a result, 
an explicit formula for Witten's $r$-spin class is obtained (along
with tautological relations in higher degrees). As an application,
the $r=4$ relations are used to bound the Betti numbers of
$R^*(\cM_g)$. At the second semisimple point, the form of
the $R$-matrix 
implies a polynomiality property in $r$ of Witten's $r$-spin class.

In the Appendix (with F. Janda), a conjecture relating the\linebreak $r=0$ limit of 
Witten's $r$-spin class to the class of the moduli space
of holomorphic differentials is presented.
\end{abstract}

\setcounter{tocdepth}{1} 

\baselineskip=8pt
\tableofcontents

\baselineskip=15pt

%\setcounter{section}{-1}

%\begin{abstract}
%ABSTRACT
%\end{abstract}

\pagebreak
\setcounter{section}{-1}
\section{Introduction}

\subsection{Overview} \label{s000}
Let $\oM_{g,n}$ be the moduli space of stable genus $g$ curves with $n$ markings. Let
$$RH^*(\oM_{g,n}) \subset H^*(\oM_{g,n})$$
be the subring of tautological 
classes in cohomology{\footnote{All cohomology and Chow groups
will be taken with $\mathbb{Q}$-coefficients.
The tautological ring in Chow is denoted by $$R^*(\oM_{g,n})\subset A^*(\oM_{g,n})\, .$$
We will use the complex grading for $RH^*(\oM_{g,n})$, so
$R^i(\oM_{g,n}) \rightarrow RH^i(\oM_{g,n})$.}}.
The subrings  $$\Big\{\, RH^*(\oM_{g,n}) \subset H^*(\oM_{g,n})\, \Big\}_{g,n}$$ 
are defined together as {\em the smallest
system of $\mathbb{Q}$-subalgebras closed under push-forward via all
boundary and forgetful maps}, see \cite{FP1,FP3,GraPan}.
There has been substantial progress in the understanding of      
 $RH^*(\oM_{g,n})$ since the study began in the 1980s \cite{Mum}. 
The subject took  a new turn in 2012 with the
family of relations conjectured in \cite{Pixton}. We refer the
reader to \cite{SLC} for a survey of recent developments.

Witten's $r$-spin class defines a Cohomological Field Theory (CohFT) for each integer $r\geq 2$.
Witten's $2$-spin theory concerns only  
the fundamental classes of the moduli spaces
of curves (and leads to no new geometry).
In our previous paper \cite{PaPiZv}, we used Witten's 3-spin 
theory to construct a family of relations among tautological classes of $\oM_{g,n}$ equivalent (in cohomology) to the relations
proposed in \cite{Pixton}.
Our goal here is to extend our study of tautological relations 
to Witten's $r$-spin theory for all $r \geq 3$.

Taking \cite{PaPiZv} as a starting point, Janda has completed
 a formal study of 
tautological relations obtained from CohFTs.
Two results of Janda  are directly relevant
here:
\begin{enumerate}
\item[(i)] The relations for $r= 3$ are valid in Chow 
\cite{Janda14, Janda15*}.
\item[(ii)] The relations for $r\geq 4$ are implied by the relations 
for 
$r=3$ \cite{Janda14,Janda15}.
\end{enumerate}
By (i) and (ii) together, all of the $r$-spin relations that we find will be valid in Chow.
However, since our methods here are cohomological, we will use the
language of cohomology throughout the paper.

\pagebreak
Given Janda's results, why proceed with the higher $r$-spin analysis?
There are three basic reasons:
\begin{enumerate}
\item[$\bullet$] 
The $r=4$ relations are simpler and easier to use than the 3-spin
relations when restricted to $\cM_g$. From the 4-spin relations, 
we derive a new bound
on the rank of $R^d(\cM_g)$ which specializes in case $d\geq g-2$ to 
the basic results
\begin{equation}
\text{dim}_{\mathbb{Q}}\, R^{g-2}(\cM_g) \leq 1\, , \ \ \ \ \
\text{dim}_{\mathbb{Q}}\, R^{>g-2}(\cM_g)=0 \label{ftft}
\end{equation}
of Looijenga \cite{Looij}. By (ii)  above, we 
conclude that both statements in \eqref{ftft} follow from the restriction of the 3-spin relations to $\cM_g$.
The latter restriction equals the Faber-Zagier relations{\footnote{See \cite{SLC} for a
survey of the Faber-Zagier relations and related topics.}}. 

The outcome is
a proof that Looijenga's results 
\eqref{ftft} follow from the Faber-Zagier 
relations. Since Faber's conjectures \cite{Faber} governing the 
proportionalities of $\kappa$ monomials in $R^{g-2}(\cM_g)$
are known{\footnote{A proof (unpublished) was found
by Faber and Zagier in 2002. The result
is also derived in \cite[Section 3]{PiThesis}.}} 
to be compatible with the Faber-Zagier relations,
we can also conclude that the Faber-Zagier relations
{\em imply} these proportionalities.

Relations, by themselves, cannot prove non-vanishing results.
The nonvanishing
\begin{equation}\label{ltt2}
\text{dim}_{\mathbb{Q}}\, R^{g-2}(\cM_g)\geq 1
\end{equation}
is proven \cite{Faber,FP1} by Hodge integral evaluations. The results
\eqref{ftft} and \eqref{ltt2} together prove 
$$R^{g-2}(\cM_g)\cong\mathbb{Q}\, .$$

\item[$\bullet$] Another outcome is a much better understanding of Witten's $r$-spin
class for higher $r$. We obtain an exact formula for Witten's $r$-spin
correlators in genus 0
in terms of the
representation theory of ${\mathsf{sl}}_2(\mathbb{C})$. 
The genus 0 results and the Givental-Teleman classification of semisimple CohFTs together
provide two explicit approaches to Witten's $r$-spin class in all genera.
The first leads to a formula for all $r$ parallel to the $3$-spin
formula of \cite{PaPiZv}. 
As an application of the second approach, we prove a new {\em polynomiality} property in $r$
of Witten's $r$-spin
class.

\item[$\bullet$] In the Appendix with Janda, we present a new 
conjecture relating
an appropriate limit (defined by polynomiality) of Witten's $r$-spin class to 
the class of the moduli space of holomorphic differentials with prescribed
zero multiplicities.

\end{enumerate}

\noindent These are unexpected developments. There
is no {\em a priori} reason to believe the $4$-spin relations
would be algebraically simpler or that polynomiality in $r$
holds for Witten's class. The very simple connection with the class
of the moduli
space of differentials leads to a much more direct
calculation than in \cite[Appendix]{FarP},  but is available
only in the holomorphic case.

\subsection{Cohomological field theories} \label{cft}
We recall here the basic definitions of a cohomological field theory by Kontsevich and Manin~\cite{KonMan}, see also \cite{PPP} for a survey.

Let $V$ be a finite dimensional $\mathbb{Q}$-vector space with 
a non-degenerate symmetric 2-form $\eta$ and a 
distinguished element $\b1 \in V$.
The data $(V,\eta, \b1)$ is the starting point for defining 
a cohomological field theory.
Given a 
basis $\{e_i\}$ of $V$, we write the 
symmetric form as a matrix
$$\eta_{jk}=\eta(e_j,e_k) \ .$$ The inverse matrix is denoted by $\eta^{jk}$ as usual.

A {cohomological field theory} consists of 
a system $\Omega = (\Omega_{g,n})_{2g-2+n > 0}$ of elements 
$$
\Omega_{g,n} \in H^*(\oM_{g,n}) \otimes (V^*)^{\otimes n}.
$$
We view $\Omega_{g,n}$ as associating a cohomology class on $\oM_{g,n}$ 
to elements of $V$ assigned to the $n$ markings.
The CohFT axioms imposed on $\Omega$ are:

\begin{enumerate}
\item[(i)] Each $\Omega_{g,n}$ is $S_n$-invariant, where the action of the symmetric group $S_n$ permutes both the marked points of $\oM_{g,n}$ and the
copies of $V^*$.
\item[(ii)] Denote the basic gluing maps by
$$
q : \oM_{g-1, n+2} \to \oM_{g,n}\ ,
$$
$$
r: \oM_{g_1, n_1+1} \times \oM_{g_2, n_2+1} \to \oM_{g,n}\ .
$$
The pull-backs $q^*(\Omega_{g,n})$ and $r^*(\Omega_{g,n})$ are
 equal to the contractions of $\Omega_{g-1,n+2}$ and 
$\Omega_{g_1, n_1+1} \otimes \Omega_{g_2, n_2+1}$ by the bi-vector 
$$\sum_{j,k} \eta^{jk} e_j \otimes e_k$$ inserted at the two identified points. 
\item[(iii)] Let $v_1, \dots, v_n \in V$ be any vectors, and let $p: \oM_{g,n+1} \to \oM_{g,n}$ be the forgetful map. We require 
$$
\Omega_{g,n+1}(v_1 \otimes \cdots \otimes v_n \otimes \b1) = p^*\Omega_{g,n} (v_1 \otimes \cdots \otimes v_n)\ ,
$$
$$\Omega_{0,3}(v_1\otimes v_2 \otimes \b1) = \eta(v_1,v_2)\ .$$
\end{enumerate}

\begin{definition}\label{defcohft}
A system $\Omega= (\Omega_{g,n})_{2g-2+n>0}$ of elements 
$$
\Omega_{g,n} \in H^*(\oM_{g,n}) \otimes (V^*)^{\otimes n}
$$
satisfying properties~(i) and (ii) is a {\em cohomological field theory} or a {\em CohFT}. If (iii) is also satisfied, $\Omega$ is 
a {\em CohFT with unit}.
\end{definition}

A CohFT $\Omega$ yields a {\em quantum product} $\bullet$ on $V$ via 
$$\eta(v_1 \bullet v_2, v_3) = \Omega_{0,3}(v_1 \otimes v_2 \otimes v_3)\ .$$
Associativity of $\bullet$ follows from (ii). The element
$\b1\in V$ is the identity for $\bullet$ by (iii).

A CohFT $\omega$ composed only of degree~0 classes,
$$\omega_{g,n} \in H^0(\oM_{g,n}) \otimes (V^*)^{\otimes n}\ ,$$
 is called a {\em topological field theory}. 
Via property (ii), $\omega_{g,n}(v_1, \dots, v_n)$ 
is determined by considering stable curves with a maximal number of nodes. 
Such a curve is obtained by identifying several rational curves with three marked points. The value of $\omega_{g,n}(v_1 \otimes \cdots \otimes v_n)$ is 
thus uniquely specified by the values of $\omega_{0,3}$ and by the quadratic form~$\eta$. In other words, given $V$ and $\eta$, a topological field theory is uniquely determined by the associated quantum product.

\subsection{Witten's $r$-spin class} \label{wsc}
For every integer $r \geq 2$, there is a beautiful CohFT obtained from 
Witten's $r$-spin class. We review here the basic properties of
the construction. The integer $r$ is fixed once and for all.
  
Let $V_r$ be an $(r-1)$-dimensional $\mathbb{Q}$-vector space with basis
 $e_0, \dots, e_{r-2}$, bilinear form 
$$
\eta_{ab} = \eta(e_a, e_b) =\delta_{a+b,r-2} \, ,
$$
and unit vector $\b1 = e_0$. Witten's $r$-spin theory provides a family of classes
$$
W^r_{g,n}(a_1, \dots, a_n) \in H^*(\oM_{g,n}).
$$ 
for $a_1, \dots, a_n \in \{0, \dots, r-2 \}$.
These define a CohFT by
$$
\W^r_{g,n}: V^{\otimes n} \rightarrow H^*(\oM_{g,n}), 
\ \ \ 
\W^r_{g,n}( e_{a_1} \otimes \cdots \otimes e_{a_n}) =
W^r_{g,n}(a_1, \dots, a_n)\ .
$$
%To emphasize $r$, we will often refer to $V$ as $V_r$.
Witten's class $W^r_{g,n}(a_1, \dots, a_n)$ has (complex) degree given
by the  formula
\begin{eqnarray}
\label{gred}
\text{deg}_{\C}\ W^r_{g,n}(a_1, \dots, a_n) & = & 
\D^r_{g,n}(a_1, \dots, a_n) \\ \nonumber
& = & \frac{(r-2)(g-1) + \sum_{i=1}^n a_i}{r}\ .
\end{eqnarray}
If $\D^r_{g,n}(a_1, \dots, a_n)$ is not an integer, the corresponding
Witten's class vanishes.

In genus 0, the construction was first carried out by Witten \cite{Witten}
using $r$-spin structures ($r^{\rm th}$ roots of the canonical bundle)
and satisfies the following initial conditions:
\begin{equation}\label{fred}
W^r_{0,3}(a_1,a_2,a_3) = 
\left|
\begin{array}{cl}
1 & \mbox{ if } a_1+a_2+a_3 = r-2,\\
0 & \mbox{ otherwise.}
\end{array}
\right.
\end{equation}
$$
W^r_{0,4}(1,1,r-2,r-2) = \frac1{r}\cdot [{\mathrm{pt}}] \ \in H^2(\overline{M}_{0,4})\ .
$$
Uniqueness of Witten's $r$-spin theory in
genus~0 follows  from the initial conditions \eqref{fred}
and the axioms of a CohFT with unit.

The genus $0$ sector defines a quantum product $\bullet$
on $V$ with unit $e_0$,
$$ \eta(e_a\bullet e_b, e_c) = W^r_{0,3}(a,b,c) \ .$$
The resulting algebra, even after
extension to $\C$, is not semisimple.

The existence of Witten's class in higher genus is both remarkable and
highly non-trivial. 
An algebraic construction was first obtained by Polishchuk and Vaintrob~\cite{PolVai} defining 
$$W^r_{g,n}(a_1, \dots, a_n)\in A^*(\oM_{g,n})$$
as a cycle class. The algebraic approach was 
later simplified by Chiodo~\cite{Chiodo}. 
Analytic constructions have been given by Mochizuki ~\cite{Mochizuki} and
later by Fan, Jarvis, and Ruan \cite{fjr}.
As a consequence of the following result, the analytic
and algebraic approaches coincide and yield tautological classes
in cohomology.

\begin{theorem}[\cite{PaPiZv}] \label{Thm:Wittensclass}
For every $r\geq 2$, 
there is a unique CohFT which extends Witten's $r$-spin theory
in genus 0 and has pure dimension \eqref{gred}.
The unique extension takes values in the tautological ring
$$RH^*(\oM_{g,n})\subset H^*(\oM_{g,n}).$$
\end{theorem}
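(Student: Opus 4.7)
The plan is to reduce the question to the Givental--Teleman classification and then extract both uniqueness and tautologicality from it. The genus 0 CohFT is uniquely determined by the initial conditions \eqref{fred} and the axioms, but it fails to be semisimple at the origin, so the classification theorem cannot be applied directly. The remedy is to work at a shifted point.

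First, I would shift Witten's $r$-spin CohFT along a carefully chosen direction in $V_r$ (for instance, along $e_{r-2}$ — a shift which is known to make the underlying Frobenius algebra semisimple for $r$-spin, as $V_r$ acquires the structure of the Jacobian ring of $x^r/r$ deformed by a single linear term). Semisimplicity of the shifted algebra has to be verified from the quantum product determined by $W^r_{0,3}$ and $W^r_{0,4}$; this is a purely genus 0 computation and so depends only on the initial conditions \eqref{fred}, not on the (a priori non-unique) higher-genus extension.

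Second, with the shifted CohFT semisimple, the Givental--Teleman classification asserts that it is determined by two pieces of data: its topological part (the associated TQFT), which is reconstructed from genus 0 alone, and an $R$-matrix $R(z) = \mathrm{Id} + R_1 z + R_2 z^2 + \cdots \in \End(V_r)[[z]]$ satisfying the symplectic condition $R(z)R^*(-z)=\mathrm{Id}$. The uniqueness claim then reduces to showing that $R$ is uniquely pinned down by the degree constraint \eqref{gred}. Homogeneity translates into the fact that $R$ intertwines the Euler field (grading operator) on the two sides, which gives a system of linear equations on the coefficients $R_k$ order by order in $z$. Combined with the symplectic condition, I would show that this system has a unique solution at each order, proceeding by induction on $k$. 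This is the step I expect to be the main obstacle: one has to check that the degree operator acts with no resonant eigenvalues on the relevant subspaces of $\End(V_r)$, so that the homogeneity equation can actually be solved (and solved uniquely) at every order.

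Third, once $R$ is uniquely determined, Givental's explicit reconstruction formula builds $\Omega_{g,n}$ by summing over stable graphs, placing the topological CohFT at vertices, $\psi$-classes and coefficients of $R$ at half-edges, and a translation term on smooth markings, then pushing forward via the boundary gluing maps. Every ingredient of this formula lies in the tautological ring and all operations (pushforward by gluing, multiplication by $\psi$ and $\kappa$ classes) preserve tautologicality. Therefore the reconstructed shifted CohFT takes values in $RH^*(\oM_{g,n})$. Finally, unshifting back to the origin is a formal CohFT operation (translation by a vector, which is well defined since the shift is along an element in the CohFT's domain of convergence as a formal power series) and again preserves the tautological ring, giving the unique extension of Witten's class and proving both assertions of the theorem.
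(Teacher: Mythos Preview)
Your outline is correct and matches the approach of \cite{PaPiZv}, which is where the paper attributes the proof; the present paper does not re-prove Theorem~\ref{Thm:Wittensclass} but effectively re-runs your program in Sections~\ref{Sec:lastshift}--\ref{ttttt} (shift to $\tau=(0,\dots,0,r\phi)$, verify semisimplicity via explicit idempotents, solve the recursion $[R_{m+1},\xi]=(m+\mu)R_m$ for $R$, apply Teleman), arriving at Theorem~\ref{Thm:rWitten} and the tautologicality consequence~\eqref{fftt22}. One small correction: your worry about ``resonant eigenvalues'' is not the right obstruction---uniqueness of $R$ in the homogeneous (Euler-field) setting is a standard part of Teleman's theorem, coming from the fact that $\xi$ has simple spectrum at a semisimple point so $[\,\cdot\,,\xi]$ is invertible off the diagonal, with the diagonal part fixed by the symplectic condition; no separate resonance check on $\mu$ is needed.
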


\vspace{8pt}
Whether Witten's $r$-spin theory as an algebraic cycle
takes values in
$$R^*(\oM_{g,n})\subset A^*(\oM_{g,n})$$
 is an open question.

\subsection{Witten's $r$-spin class and representations of $\slc$.}
Consider the Lie algebra $\sl_2=\slc$.
Denote by $\rho_k$ the $k$-th symmetric power of the standard 2-dimensional representation of $\sl_2$,
$$\rho_k =\text{Sym}^k(\rho_1)\, , \ \ \   \dim \rho_k = k+1\, .$$
The complete list
of irreducible representations of $\sl_2$ is
 $\{ \rho_k\}_{k \geq 0}$, see \cite{FH}.
Let
$$
H = 
\frac12 
\begin{pmatrix}
1 & 0 \\
0 & -1
\end{pmatrix}
\in \sl_2\, .
$$
The trace of the exponential is  
$$
{\rm tr}_k e^{tH} = \frac{e^{(k+1)t/2} - e^{-(k+1)t/2}}{e^{t/2} - e^{-t/2}}\, .
$$
The formula for the tensor product of two irreducible representations
is then easily obtained: 
$$
\rho_k \otimes \rho_l = 
\rho_{|k-l|} \oplus \rho_{|k-l|+2} \oplus \cdots \oplus \rho_{k+l}\, .
$$
Our first result relates Witten's $r$-spin class in genus 0 with
the representation theory of $\sl_2$. 

\begin{theorem} \label{Thm:sl2}
Let  ${\bf{a}}=(a_1, \dots, a_{n\geq 3})$ with 
$a_i \in \{ 0, \dots, r-2 \}$  satisfy the degree constraint
$\D^r_{0,n}({\bf{a}})= n-3$.
Then, 
\begin{multline*}
W^r_{0,n}({\bf{a}}) = 
\frac{(n-3)!}{r^{n-3}} 
\dim \Bigl[ \rho_{r-2-a_1} \otimes \cdots \otimes \rho_{r-2-a_n} \Bigr]^{\sl_2} \cdot [{\rm pt}]\,   \in H^{2(n-3)}(\oM_{0,n}),
\end{multline*}
where the superscript $\sl_2$ denotes the $\sl_2$-invariant subspace and
the class 
$$[{\rm pt}]\in H^{2(n-3)}(\oM_{0,n})$$ is  Poincar\'e dual to a point.
\end{theorem}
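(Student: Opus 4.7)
My plan is to exploit the uniqueness of Witten's $r$-spin CohFT in genus $0$: by \eqref{fred} together with the CohFT axioms with unit, the classes $W^r_{0,n}({\bf a})$ are completely determined in genus $0$. Under the degree constraint $\D^r_{0,n}({\bf a})=n-3$, both sides of the claimed formula are numbers times $[\mathrm{pt}]\in H^{2(n-3)}(\oM_{0,n})$, so it suffices to show that the right-hand side, viewed as prescribing top-dimensional correlators on $\oM_{0,n}$, satisfies the same initial conditions and the same genus-$0$ WDVV/splitting recursion as Witten's correlators.

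The base cases reduce to standard Clebsch-Gordan arithmetic. For $n=3$, the constraint $a_1+a_2+a_3=r-2$ forces the shifted parameters $b_i:=r-2-a_i$ to satisfy $\sum b_i=2(r-2)$ (correct parity) and the triangle inequalities $b_i\leq b_j+b_k=2(r-2)-b_i$ (since $b_i\leq r-2$), giving $\dim[\rho_{b_1}\otimes\rho_{b_2}\otimes\rho_{b_3}]^{\sl_2}=1$ in agreement with $W^r_{0,3}({\bf a})=1$. For the $4$-point initial condition ${\bf a}=(1,1,r-2,r-2)$ one has $b_i=(r-3,r-3,0,0)$, and the multiplicity of $\rho_0$ in $\rho_{r-3}\otimes\rho_{r-3}$ equals $1$, so the right-hand side gives $\tfrac{1!}{r^{1}}\cdot 1=\tfrac{1}{r}$, matching \eqref{fred}.

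The substantive step for $n\geq 5$ is the inductive propagation. The central identity is the classical fusion rule for $\sl_2$-invariants,
\[
\dim\bigl[V_A\otimes V_B\bigr]^{\sl_2}\;=\;\sum_{c\geq 0}\dim\bigl[V_A\otimes\rho_c\bigr]^{\sl_2}\cdot\dim\bigl[V_B\otimes\rho_c\bigr]^{\sl_2},
\]
which is the representation-theoretic mirror of the CohFT splitting at a boundary divisor of $\oM_{0,n}$. Taking $V_A=\bigotimes_{i\in A}\rho_{r-2-a_i}$ and $V_B=\bigotimes_{i\in B}\rho_{r-2-a_i}$ and identifying the gluing index via $c\leftrightarrow r-2-a$ with $a\in\{0,\dots,r-2\}$, one must verify that (i) the $\sl_2$ sum effectively truncates to $c\leq r-2$, because summands with larger $c$ correspond to subcorrelators on which the degree condition $\D\in\Z$ (or the triangle inequality) fails, so they vanish on both sides, and (ii) the combinatorial prefactor $\frac{(n-3)!}{r^{n-3}}$ assembles correctly from the two smaller components. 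The cleanest way to execute (ii) is probably to package both sides into genus-$0$ prepotentials and show they satisfy identical WDVV equations with matching initial data, reducing everything to the uniqueness of the $A_{r-1}$ Frobenius prepotential from its 3-point and 4-point data.

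The main obstacle I anticipate is step (ii): a naive splitting with $|A|=m$ yields a product $\frac{(m-2)!}{r^{m-2}}\cdot\frac{(n-m-2)!}{r^{n-m-2}}$, which differs from the target $\frac{(n-3)!}{r^{n-3}}$ by a nontrivial binomial-type factor. This mismatch must be absorbed by the WDVV bookkeeping: the sum over partitions $A\sqcup B$ of the remaining markings supplies multinomial coefficients that balance the prefactor discrepancy, and the higher-derivative WDVV relations obtained by differentiating at $t=0$ encode precisely this combinatorics. Handling these prefactors uniformly across all splittings, and verifying the truncation $c\leq r-2$ compatibly with the degree constraint, is the delicate part of the argument.
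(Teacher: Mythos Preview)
Your overall strategy coincides with the paper's: verify the formula for $n\leq 4$ directly, invoke uniqueness of the genus-$0$ correlators from the initial data plus WDVV, and then check that the $\sl_2$ side satisfies WDVV. The base cases are fine. The gap is in your execution of the WDVV verification.

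The ``central identity'' you propose,
\[
\dim\bigl[V_A\otimes V_B\bigr]^{\sl_2}=\sum_{c\geq 0}\dim\bigl[V_A\otimes\rho_c\bigr]^{\sl_2}\cdot\dim\bigl[V_B\otimes\rho_c\bigr]^{\sl_2},
\]
is a tautology (insert a complete set of irreducibles) and does not mirror WDVV. WDVV is not a single factorization identity; it is a \emph{comparison between two different factorizations} (pairing $a,c$ versus $a,d$) summed over all partitions $I\sqcup J$ of the remaining $k$ markings with weights $|I|!\,|J|!$. Your fusion rule applies equally to both sides and cancels nothing. Moreover, the fusion sum runs over all $c\geq 0$ while the WDVV node index is truncated to $\{0,\dots,r-2\}$; you assert this truncation is harmless but give no mechanism for it, and it is not automatic (the degree constraint on each factor is what enforces it, but only after the full combinatorics is unpacked).

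The prefactor mismatch you flag is a genuine obstruction, and ``multinomial coefficients from the WDVV bookkeeping'' does not fix it: the sum over $I\sqcup J$ does not produce a single binomial $\binom{k}{i}$ per size, because the $\sl_2$ dimensions depend on \emph{which} $x_i$'s land in $I$. The paper's actual verification proceeds by expressing $\langle a_1,\dots,a_n\rangle^{\sl_2}$ as a coefficient in a rational generating function in auxiliary variables $t,u$, symmetrizing the resulting WDVV difference under a cyclic action in three variables $t,u,v$, and reducing everything to a concrete cyclic identity of the form
\[
\sum_{\text{cyc}}(t-u)v\sum_{I\sqcup J}|I|!\,|J|!\prod_{i\in I}\frac{t^{x_i}v-v^{x_i}t}{t-v}\prod_{j\in J}\frac{u^{x_j}v-v^{x_j}u}{u-v}=0,
\]
which is then proved by summing into a closed rational form whose numerator is $z_1+z_2+z_3=0$ for $z_1=t(u-v)$, $z_2=u(v-t)$, $z_3=v(t-u)$. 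This is the missing idea; nothing in your proposal approximates it.
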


\vspace{5pt}
The degree constraint $\D^r_{0,n}({\bf{a}})= n-3$ in the statement of
 Theorem \ref{Thm:sl2} can be written
 equivalently (using \eqref{gred}) as
$$\sum_{i=1}^n a_i = (n-2)r-2\, .$$
Since $a_i\leq r-2$, the bound $n\leq r+1$ is a simple consequence.

\subsection{Shifted Witten class}
\label{frrz}
Given a vector $\gamma \in V_r$, the {\em shifted Witten class} is defined by 
$$
\W_{g,n}^{r,\gamma}(v_1 \otimes \cdots \otimes v_n) = 
\sum_{m \geq 0}\frac{1}{m!}\,
p_{m*}\, \W^r_{g,n+m}(v_1 \otimes \cdots \otimes v_n \otimes \gamma^{\otimes m}),
$$
where $p_m \colon \oM_{g,n+m} \to \oM_{g,n}$ is the forgetful map.
The shifted Witten class $\W^{r,\gamma}$ determines a CohFT, see
\cite[Section 1.1]{PaPiZv}.

The vector space $V_r$ carries a {\em Gromov-Witten potential} $\F$ satisfying 
$$
\frac{\d^3 \F}{\d t^a \d t^b \d t^c}(\gamma) = 
\W^{r,\gamma}_{0,3}(e_a \otimes e_b \otimes e_c)
$$
which defines a Frobenius manifold structure on $V_r$. 

\begin{example}{\em
For $r=3$, the Gromov-Witten potential obtained from  Witten's class equals
$$
\F(x,y) = \frac12 x^2 y +\frac1{72} y^4,
$$
where $x = t^0$ and $y = t^1$.

For $r=4$, the potential is 
$$
\F(x,y,z) = \frac12 x^2 z + \frac12 xy^2 + \frac1{16} y^2z^2 + \frac1{960} z^5,
$$
where $x = t^0$, $y = t^1$, and  $z= t^2$.}
\end{example}

The tangent vector space to $\gamma \in V_r$ has a natural Frobenius (or fusion) algebra structure $\Phi^{r,\gamma}$ given by the structure constants
%$$
%\phi_{a,b}^c(\tau) = 
$$
 \eta( e_a\bullet_\gamma e_b, e_c) =
%\sum_{s} \eta^{s,c} 
\frac{\d^3 \F}{\d t^a \d t^b \d t^c}(\gamma).
$$

\begin{theorem} \label{Thm:Verlinde}
For $(0, \ldots, 0, r) \in V_r$,  the algebra 
$\Phi^{r,(0,\ldots,0,r)}$ is isomorphic to the Verlinde algebra of level~$r$ for $\sl_2$. 
\end{theorem}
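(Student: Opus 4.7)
The plan is to unfold the definition of the shifted Witten class at $\gamma = r\, e_{r-2}$, apply Theorem~\ref{Thm:sl2} to rewrite each genus-$0$ three-point correlator as the dimension of an $\sl_2$-invariant subspace, and then recognize the resulting structure constants as triple Verlinde fusion coefficients.

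First I would write
$$
\W^{r,\gamma}_{0,3}(e_a \otimes e_b \otimes e_c) = \sum_{m \geq 0} \frac{r^m}{m!}\, p_{m*}\, W^r_{0,3+m}\bigl(a, b, c, (r-2)^m\bigr),
$$
where $p_m\colon \oM_{0,3+m}\to\oM_{0,3}$ is the forgetful map. Because $\oM_{0,3}$ is a point, $p_{m*}$ kills everything outside the top degree of $\oM_{0,3+m}$, and the dimension formula~\eqref{gred} shows that the $m$-th integrand is top-degree for the single value
$$
m = \frac{a+b+c-r+2}{2}.
$$
The whole sum therefore collapses to one term, which contributes precisely when $m$ is a nonnegative integer.

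Next I would apply Theorem~\ref{Thm:sl2} to the surviving correlator. Since $e_{r-2}$ corresponds to the trivial representation $\rho_0$, its $m$ copies drop out of the tensor product and the theorem gives
$$
W^r_{0,3+m}\bigl(a,b,c,(r-2)^m\bigr) = \frac{m!}{r^m}\, \dim\bigl[\rho_{r-2-a}\otimes \rho_{r-2-b}\otimes \rho_{r-2-c}\bigr]^{\sl_2}\cdot[\mathrm{pt}].
$$
The factors $r^m$ and $m!$ cancel exactly against the prefactor in Step~1, yielding
$$
\W^{r,\gamma}_{0,3}(e_a\otimes e_b\otimes e_c) = \dim\bigl[\rho_{r-2-a}\otimes \rho_{r-2-b}\otimes \rho_{r-2-c}\bigr]^{\sl_2}
$$
whenever $a+b+c\geq r-2$ has the parity of $r$, and $0$ otherwise.

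Finally I would identify the right-hand side with a Verlinde triple product. Setting $i = r-2-a$, $j = r-2-b$, $k = r-2-c$, the nonvanishing condition $m\geq 0$ becomes $i+j+k\leq 2(r-2)$, which together with the triangle inequalities and the parity $i+j+k\in 2\Z$ is exactly the level-$(r-2)$ Verlinde fusion rule for $\slc$; this is the paper's ``level $r$'' in the Coxeter-shifted convention, producing the correct $r-1$ basis vectors as in $V_r$. Under the identification $e_a\leftrightarrow \rho_a$, the unit $e_0$ corresponds to the Verlinde unit $\rho_0$, and the matching of products reduces to the level-$(r-2)$ simple-current symmetry $N^{c}_{ab} = N^{c}_{(r-2)-a,\,(r-2)-b}$, an elementary check on the fusion ranges.

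The main obstacle lies in this last conventional identification: one has to pick the basis isomorphism so that units and the antidiagonal pairing $\eta_{ab} = \delta_{a+b,r-2}$ both match the standard Verlinde data, and to pin down precisely what is meant by ``level $r$''. Once that dictionary is fixed, the coincidence of structure constants is an immediate consequence of the computation in Steps~1--2.
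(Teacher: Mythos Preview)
Your approach is correct and essentially identical to the paper's: both compute the shifted three-point function via Theorem~\ref{Thm:sl2}, obtain the quantum product structure constants, and identify them with the $\sl_2$ Verlinde fusion rules. The paper just writes out $\d_a \bullet_\tau \d_b$ explicitly and matches the resulting tetrahedron conditions directly, rather than phrasing the final identification through the simple-current symmetry $N^c_{ab}=N^c_{r-2-a,\,r-2-b}$ as you do.

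One small correction: you should not try to match the bilinear form $\eta$ with the Verlinde pairing. The paper explicitly notes that the Frobenius metric is antidiagonal ($\eta_{a,b}=\delta_{a+b,r-2}$) while the standard Verlinde pairing is diagonal ($\delta_{a,b}$); only the \emph{algebra} structures coincide, not the full Frobenius data. So drop the clause about making the pairing match and simply check that your basis identification respects the product and the unit, which is exactly what the simple-current identity gives you.
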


Since the Verlinde algebra is semisimple,
 the algebra
$\Phi^{r,(0,\ldots,0,r)}$
is semisimple as a consequence of Theorem \ref{Thm:Verlinde}.
Proposition \ref{Prop:idempotents} of Section \ref{Ssec:TopFT1}
provides a  basis of idempotents for the algebras
$\Phi^{(0,\ldots,0,r\phi)}$ for all $0\neq \phi\in {\mathbb{Q}}$.

Theorems~\ref{Thm:sl2} and~\ref{Thm:Verlinde} relate  Witten's $r$-spin class and the corresponding Frobenius manifold to the representations of $\sl_2$ at level~$r$. On the other hand, the Frobenius manifold associated to Witten's
$r$-spin theory is usually constructed from the $A_{r-1}$ singularity and thus is related to the Lie algebra $\sl_r$ Lie algebra. 
Perhaps there is some form of rank-level duality for Frobenius manifolds, 
but we are not aware of other examples.

\subsection{Euler field and Hodge grading operator.}
The Frobenius manifold structure on~$V_r$ includes an Euler field and a 
conformal dimension which determine a Hodge grading operator. 
%We want to make them explicit.

The {\em Euler field} on the Frobenius manifold~$V_r$ is
$$
E = \sum_{a=0}^{r-2} \left(1 - \frac{a}r\right) t^a \d_a\, .
$$
The Lie derivatives with respect to $E$ of the basis vectors fields are easily
calculated:
$$
L_E(\d_a) = [E, \d_a] = -\left(1 - \frac{a}r\right) \d_a\, .
$$
The {\em conformal dimension} equals 
$$
\delta = \frac{r-2}{r}\, .
$$

Let $v$ be a tangent vector at a point of the Frobenius manifold. We define the {\em shifted degree operator} $\mu(v)$, also called the {\em Hodge grading operator}, by
$$
\mu(v) = [E,v] + \left(1-\frac{\delta}{2}\right)v\ .
$$
Here, the vector~$v$ is extended to a flat tangent vector field in 
order to compute the commutator.
We  have
$$
\mu(\d_a) = \frac{2a+2-r}{2r} \d_a\, .
$$

\subsection{Tautological relations} \label{trtr}
We will construct tautological relations using Givental's $R$-matrix action
on Coh\-FTs.{\footnote{A review of Givental's $R$-matrix
action  on CohFTS
in the form we require can be found in  \cite[Section 2]{PaPiZv}.}} 
The relations will be proven by studying Witten's $r$-spin class.
The point
$$\tau=(0,\ldots,0,r\phi)\in V_r\, $$
with respect to
 a nonzero parameter $\phi \in \mathbb{Q}$ will play a special role. Let
$$
\hd_a = \phi^{-(2a-r+2)/4} \d_a
$$
be a new tangent frame on the Frobenius manifold $V_r$ at the point
$\tau$.

 We define a multilinear
map 
$$\omega^{r,\tau}_{g,n}: V_r^{\otimes n} \rightarrow \mathbb{Q}$$
by the trigonometric formula
\begin{multline}\label{f99}
\omega^{r,\tau}_{g,n}\left(\hd_{a_1} \otimes \cdots \otimes \hd_{a_n}\right) 
= \\
\left(\frac{r}2\right)^{g-1}
\phi^{(r-2)(2g-2+n)/4} \; 
\sum_{k=1}^{r-1} 
\frac
{(-1)^{(k-1)(g-1)} \prod\limits_{i=1}^n \sin \left(
\frac{(a_i+1)k\pi}r \right)}
{\left(\sin(\frac{k\pi}{r})\right)^{2g-2+n}}\, .
\end{multline}
We will prove  $\omega^{r,\tau}$ is a CohFT with
the right side of \eqref{f99} interpreted as a multiple
of the identity $1\in H^0(\oM_{g,n})$. In fact, 
$\omega^{r,\tau}$ is the topological part of the $\tau$-shifted
$r$-spin
CohFT $\mathsf{W}^{r,\tau}$ defined in Section \ref{frrz}.

Our construction of tautological relations
depends upon the following hypergeometric
series.
For every $a \in \{0, \dots, r-2 \}$, we define 
$$
\B_{r,a}(T) = \sum_{m=0}^\infty
\left[
\prod_{i=1}^m 
\frac{\big((2i-1)r-2(a+1)\big)\big((2i-1)r+2(a+1)\big)}i 
\right] \!\!
\left( - \frac{T}{16r^2} \right)^{\! m} \!\!\! .
$$
For $r$ even and $a = \frac{r}2-1$, we have $\B_{r,a} = 1$. 
Otherwise, $\B_{r,a}$ is a power series
with all coefficients nonzero. We denote by $\Be_{r,a}$ and $\Bo_{r,a}$ the even and odd parts of the power series $\B_{r,a}$,
$$\B_{r,a}(T)= \Be_{r,a}(T)+\Bo_{r,a}(T)\, .$$

\begin{example}{\em
For $r=3$, we obtain a slight variation of the
series occurring in the Faber-Zagier relations:
\begin{align*}
\B_{3,0}(T) &= \sum_{m \geq 0} \frac{(6m)!}{(2m)! \, (3m)!} \left(- \frac{T}{1728} \right)^m\, ,\\
\B_{3,1}(T) &= \sum_{m \geq 0} \frac{1+6m}{1-6m} \frac{(6m)!}{(2m)! \, (3m)!} \left(- \frac{T}{1728} \right)^m\, .
\end{align*}
For $r=4$, we obtain:
\begin{align*}
\B_{4,0}(T) &= \sum_{m \geq 0} \frac{(4m)!}{m! \, (2m)!} \left(- \frac{T}{256} \right)^m,\\
\B_{4,1}(T) &= 1\, ,\\
\B_{4,2}(T) &= \sum_{m \geq 0} \frac{1+4m}{1-4m} \frac{(4m)!}{m! \, (2m)!} \left(- \frac{T}{256} \right)^m\, .
\end{align*}}
\end{example}

Consider the matrix-valued power series $R(z) \in \End(V_r)[[z]]$ with coefficients given by
$$
R^a_a = \Be_{r,r-2-a}(\phi^{-r/2}z)\, , 
\ \ \ \ a \in \{0, \dots, r-2 \}
$$
on the main diagonal,
$$
R^{r-2-a}_a =  -\Bo_{r,a}(\phi^{-r/2}z)\, , 
\ \ \ \ a \in \{0, \dots, r-2 \}
$$
on the antidiagonal (if $r$ is even, the coefficient at the intersection of both diagonals is 1), and $0$ everywhere else.

\begin{example}{\em
For $r=3$, the $R$ matrix is
$$
\begin{pmatrix}
\ \ \ \Be_{3,1}(\phi^{-3/2}z) & -\Bo_{3,1}(\phi^{-3/2}z) \\
-\Bo_{3,0}(\phi^{-3/2}z) &\ \ \ \Be_{3,0}(\phi^{-3/2}z)
\end{pmatrix}\, . $$
For $r=4$, the $R$ matrix is
$$
\begin{pmatrix}
\ \ \ \Be_{4,2}(\phi^{-2}z) & \ 0\  & -\Bo_{4,2}(\phi^{-2}z)   \\
0 & \ 1\  & 0 \\
-\Bo_{4,0}(\phi^{-2}z) &\ 0\  & \ \,  \Be_{4,0}(\phi^{-2}z)
\end{pmatrix}\, . $$}
\end{example}

\vspace{5pt}
We will prove that the inverse matrix $R^{-1}(z)$ has coefficients
$$
(R^{-1})^a_a = \Be_{r,a}(\phi^{-r/2}z)\, , 
\ \ \ \ a \in \{0, \dots, r-2 \}
$$
on the main diagonal,
$$
(R^{-1})^{r-2-a}_a = \Bo_{r,a}(\phi^{-r/2}z)\, , 
\ \ \ \ a \in \{0, \dots, r-2 \}
$$
on the anti-diagonal (if $r$ is even, the coefficient at the intersection of both diagonals is 1), and $0$ everywhere else.

%For $\phi\neq 0$, consider the semipoint of the Frobenius manifold $V_r$
%$$\tau = (0,\ldots, 0, r\phi)\, .$$
Let $\Omega^{r,\tau}$ be the stable graph expression for the CohFT obtained by the action  $\omega^{r,\tau}$ of the above $R$-matrix,
$$\Omega^{r,\tau}= R.\omega^{r,\tau},$$
see \cite[Definition 2.13]{PaPiZv}.

\begin{theorem} \label{Thm:TautRel1}
For every $d > \D^r_{g,n}(a_1, \dots, a_n)$, the degree $d$ part of
$$\Omega^{r,\tau}_{g,n}(e_{a_1} \otimes \cdots \otimes e_{a_n})\in H^*(\oM_{g,n})$$ 
vanishes.
\end{theorem}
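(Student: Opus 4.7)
The plan is to identify $\Omega^{r,\tau}$ with the shifted Witten CohFT $\W^{r,\tau}$ via the Givental-Teleman classification, and then read off the degree bound from the pure degree of Witten's class. By Theorem \ref{Thm:Verlinde}, the Frobenius algebra $\Phi^{r,\tau}$ at $\tau = (0,\ldots,0,r\phi)$ is the level-$r$ Verlinde algebra for $\slc$ and is in particular semisimple; together with the Euler field and Hodge grading operator $\mu$ of Section 0.6, this exhibits $\W^{r,\tau}$ as a semisimple homogeneous CohFT. The Givental-Teleman theorem then reconstructs $\W^{r,\tau}$ as the $R$-matrix action on its own topological part, where $R(z) \in \End(V_r)[[z]]$ is the unique power series satisfying $R(0) = \text{Id}$, the symplectic relation $R(z) R^*(-z) = \text{Id}$, and the homogeneity ODE driven by $E$ and $\mu$. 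The theorem thus reduces to two checks: that $\omega^{r,\tau}$ is the topological part of $\W^{r,\tau}$, and that the hypergeometric matrix of Section 0.7 is this unique $R$-matrix.

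The first check is a direct computation. The topological part of a semisimple CohFT is the TQFT of its Frobenius algebra. Using an explicit basis of idempotents and normalization constants $\Delta_k$ for $\Phi^{r,\tau}$ (to be supplied by Proposition \ref{Prop:idempotents}) together with the standard semisimple TQFT formula $\omega_{g,n}(v_1,\ldots,v_n) = \sum_k \Delta_k^{g-1}\prod_i v_i^k$ (where $v_i = \sum_k v_i^k \epsilon_k$), one recovers in the frame $\hd_a$ the Verlinde/Witten-Kac trigonometric sum on the right side of (\ref{f99}). The sign $(-1)^{(k-1)(g-1)}$ records the sign of $\eta$ on each idempotent, and the prefactor $\phi^{(r-2)(2g-2+n)/4}$ encodes the rescaling $\d_a \mapsto \hd_a$ at the point $\tau$.

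The main obstacle is the second check: matching the explicit hypergeometric matrix with the Givental-Teleman $R$-matrix for $\W^{r,\tau}$. The initial condition $R(0) = \text{Id}$ is immediate from $\B_{r,a}(0) = 1$. Because $R$ has nonzero entries only on the main diagonal and antidiagonal (with a $1\times 1$ block in the middle when $r$ is even), both the symplectic relation and the homogeneity ODE decouple into independent $2\times 2$ (or $1\times 1$) blocks indexed by the pair $\{a, r-2-a\}$. In each block the symplectic relation reduces to an algebraic identity of the form $\B_{r,a}(T)\B_{r,r-2-a}(-T) = 1$, verifiable directly from the product defining $\B_{r,a}$. The homogeneity ODE in each block becomes precisely the classical hypergeometric differential equation whose unique solution with initial value $1$ at $T = 0$ is $\B_{r,a}$. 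Uniqueness of solutions then identifies the given $R$-matrix with the Givental-Teleman $R$-matrix, so $\Omega^{r,\tau} = \W^{r,\tau}$.

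Finally, the degree bound follows directly from the definition of the shift. By construction,
$$
\W^{r,\tau}_{g,n}(e_{a_1}\otimes\cdots\otimes e_{a_n}) = \sum_{m\geq 0}\frac{(r\phi)^m}{m!}\, p_{m*}\, W^r_{g,n+m}(a_1,\ldots,a_n,r-2,\ldots,r-2).
$$
By (\ref{gred}) each $W^r_{g,n+m}(a_1,\ldots,a_n,r-2,\ldots,r-2)$ is pure of complex degree $\D^r_{g,n+m}(a_1,\ldots,a_n,r-2,\ldots,r-2)$, and $p_{m*}$ lowers complex degree by $m$, so the summand degree equals
$$
\D^r_{g,n+m}(a_1,\ldots,a_n,r-2,\ldots,r-2) - m = \D^r_{g,n}(a_1,\ldots,a_n) - \frac{2m}{r} \leq \D^r_{g,n}(a_1,\ldots,a_n).
$$
Hence no class of degree exceeding $\D^r_{g,n}(a_1,\ldots,a_n)$ can appear in $\Omega^{r,\tau}_{g,n}(e_{a_1}\otimes\cdots\otimes e_{a_n})$, which gives the required vanishing.
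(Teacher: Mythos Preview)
Your approach is the same as the paper's: use Teleman's classification at the semisimple point $\tau$ to identify $\Omega^{r,\tau}=R.\omega^{r,\tau}$ with the shifted CohFT $\W^{r,\tau}$, then bound the degree of each summand in the shift by $\D^r_{g,n}(a_1,\ldots,a_n)-\tfrac{2m}{r}$.

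One correction, though it does not affect the argument: the symplectic relation in each $2\times 2$ block is not $\B_{r,a}(T)\,\B_{r,r-2-a}(-T)=1$ --- that identity is already false at order~$T$ (for $r=3$ the left side is $1-\tfrac{T}{12}+\cdots$). The correct identity is $\Be_{r,a}\Be_{r,r-2-a}-\Bo_{r,a}\Bo_{r,r-2-a}=1$, equivalently $\B_{r,a}(T)\B_{r,r-2-a}(-T)+\B_{r,a}(-T)\B_{r,r-2-a}(T)=2$. In any case you do not need to verify the symplectic condition separately: at a semisimple homogeneous point the recursion $[R_{m+1},\xi]=(m+\mu)R_m$ with $R_0=\mathrm{Id}$ already determines $R$ uniquely, and the symplectic condition is then automatic.
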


The complexity of the topological field theory $\omega^{r,\tau}$ 
for higher $r$ leads to 
complicated relations on $\oM_{g,n}$. However, by multiplying by $\psi$ 
classes, pushing forward by forgetful maps, and then restricting to 
the interior, we obtain much simpler relations on $\cM_{g,n}$.
In order to write the resulting relations,
we  extend the definition of the power series $\B_{r,a}(T)$ to all 
$$a\ge 0 \ \ \ {\text{satisfying}}\ \ \  a\not\equiv r-1 \ {\text{ mod }} r$$
 by the formula
\[
\B_{r,a+rb}(T) = T^b\B_{r,a}(T)\, .
\]

The relations depend upon on a partition{\footnote{A partition
has positive integer parts. The empty partition is permitted.}} 
$$\sigma=(\sigma_1,\sigma_2,\ldots,\sigma_{\ell(\sigma)})$$ with
{\em no} part $\sigma_i$ congruent to $r-1$ mod $r$, and a vector
of non-negative integers $${\bf{a}}=(a_1,\ldots,a_n)$$ with
{\em no} $a_i$ congruent to $r-1$ mod $r$.

\begin{theorem}\label{Cor:Mg}
%Let $\sigma = (\sigma_1,\sigma_2,\ldots,\sigma_{\ell(\sigma)})$ be a partition with no parts congruent to $r-1$ mod $r$, let $a_1, \dots, a_n$ be nonnegative integers not congruent to $r-1$ mod $r$, and let $d$ be a positive integer such that
Let $\sigma$ and $\bf{a}$ avoid $r-1$ mod $r$, and let $d$
satisfy 
\begin{equation*}
rd > (r-2)(g-1) + |\sigma| + \sum_{i=1}^n a_i
\end{equation*}
and the parity condition
\begin{equation*}
rd \equiv (r-2)(g-1) + |\sigma| + \sum_{i=1}^n a_i \, \mod{2}.
\end{equation*}
Then, the degree $d$ part of 
\[
\prod_{i=1}^n \B_{r,a_i}(\psi_i)
\left(\sum_{m\ge 0}\frac{1}{m!} p_{\ell(\sigma)+m*}\prod_{j=1}^{\ell(\sigma)}\B_{r,\sigma_j+r}(\psi_{n+j})\prod_{k=1}^m(T-T\B_{r,0})(\psi_{n+\ell(\sigma)+k})\right)\]
vanishes in  $H^{2d}(\cM_{g,n})$.
\end{theorem}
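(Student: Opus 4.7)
The proof plan is to specialize the relations of Theorem \ref{Thm:TautRel1} and then restrict them to the interior $\cM_{g,n}$. I would start by applying Theorem \ref{Thm:TautRel1} on $\oM_{g,n+\ell(\sigma)}$ with the inputs $e_{a_1},\ldots,e_{a_n}$ replaced by $e_{a_i \bmod r}$ and with $\ell(\sigma)$ additional legs carrying $e_{\sigma_j \bmod r}$. Expanding $\Omega^{r,\tau} = R.\omega^{r,\tau}$ as a sum over stable graphs, with a vertex contribution $\omega^{r,\tau}$, leg contribution $R^{-1}(\psi)$, the standard edge contribution, and extra-leg translation $T(z) = z(\b1 - R^{-1}(z)\b1)$ followed by a forgetful pushforward, the restriction to the open locus $\cM_{g,n+\ell(\sigma)}$ annihilates every graph with at least one edge, since those contributions are supported on boundary strata. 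Only the one-vertex, no-edge graph survives, yielding an expression of the form $\sum_{m} \frac{1}{m!} p_{m*}\bigl[\omega^{r,\tau}_{g,n+\ell(\sigma)+m}(\cdots)\bigr]$ on $\cM_{g,n+\ell(\sigma)}$.

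The second step is to perform a forgetful pushforward from $\cM_{g,n+\ell(\sigma)}$ to $\cM_{g,n}$ that forgets the $\ell(\sigma)$ ``$\sigma$-markings''. For these markings to survive the pushforward I would multiply at each by $\psi_{n+j}^{\lfloor\sigma_j/r\rfloor + 1}$, and multiply each of the original $n$ markings by $\psi_i^{\lfloor a_i/r\rfloor}$. Using the extension $\B_{r,a+rb}(T) = T^b\B_{r,a}(T)$, these $\psi$-multiplications convert $\B_{r, a_i \bmod r}(\psi_i)$ into $\B_{r,a_i}(\psi_i)$ and $\B_{r,\sigma_j \bmod r}(\psi_{n+j})$ into $\B_{r,\sigma_j+r}(\psi_{n+j})$, matching the theorem's statement. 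Fusing the pushforward that forgets the $\sigma$-markings with the pushforward $p_{m*}$ that forgets the extra translation markings produces the single pushforward $p_{\ell(\sigma)+m*}$ of the theorem, with the translation term at each extra leg reading $T(\psi) = \psi(\b1 - R^{-1}(\psi)\b1)$, whose $e_0$ component is precisely $(T - T\B_{r,0})(\psi)$ in the combined notation. The degree condition $rd > (r-2)(g-1) + |\sigma| + \sum a_i$ is the direct translation of the bound $d > \D^r_{g,n+\ell(\sigma)}$ in Theorem \ref{Thm:TautRel1} after accounting for the $\psi$-degree shifts and the forgetful pushforwards.

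The main obstacle is to show that the $R^{-1}$-matrix and translation-term contributions, which naturally split into even ($\Be$) and odd ($\Bo$) parts through the diagonal and antidiagonal blocks of $R^{-1}$, reorganize into a single $\B_{r,a}$ factor at each leg once the parity condition is imposed. This requires a $\mathbb{Z}/2$ symmetry of the topological field theory $\omega^{r,\tau}$ under $e_a \leftrightarrow e_{r-2-a}$: from the explicit formula \eqref{f99}, the substitution $a_i \mapsto r-2-a_i$ multiplies each factor $\sin\bigl(\tfrac{(a_i+1)k\pi}{r}\bigr)$ by $(-1)^{k+1}$. Summing over diagonal-versus-antidiagonal choices at the legs and extras, and tracking these signs through the sum over $k$ in \eqref{f99}, one sees that the diagonal and antidiagonal contributions combine into a single $\B_{r,a}$ factor precisely in the parity class $rd \equiv (r-2)(g-1)+|\sigma|+\sum a_i \pmod 2$, and cancel in the opposite parity. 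Verifying this sign cancellation carefully, together with matching the $\phi$-scalings in \eqref{f99} and the definition of $R$ so that $\phi$ drops out of the final relation on $\cM_{g,n}$, is the core technical part of the argument.
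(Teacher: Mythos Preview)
Your overall strategy matches the paper's: apply Theorem~\ref{Thm:TautRel1} on $\oM_{g,n+\ell(\sigma)}$ with residues $b_i=a_i\bmod r$ and $b_{n+j}=\sigma_j\bmod r$, multiply by appropriate $\psi$-powers, push forward, restrict to the interior so that only the principal one-vertex graph survives, and then use the diagonal/antidiagonal structure of $R^{-1}$ together with the parity constraint to recombine $\Be$ and $\Bo$ into the full $\B$ series at each leg. Your observation that $a_i\mapsto r-2-a_i$ multiplies each $k$-term of \eqref{f99} by $(-1)^{k+1}$ is exactly the content of the paper's identity $\omega^{r,\tau}_{g,s+1}(\ldots,\hd_{r-2})=\omega^{r,\tau}_{g,s}(\ldots,\hd_{r-2-x_s})$, so that mechanism is correct.

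There are two genuine gaps. First, your order of operations is wrong: you restrict to $\cM_{g,n+\ell(\sigma)}$ \emph{before} pushing forward, but the forgetful map $\cM_{g,n+\ell(\sigma)}\to\cM_{g,n}$ is not proper, and a relation in $H^*(\cM_{g,n+\ell(\sigma)})$ does not push forward to a relation in $H^*(\cM_{g,n})$ --- a class supported on $\partial\oM_{g,n+\ell(\sigma)}$ can push forward to something supported over the interior $\cM_{g,n}$, precisely via rational tails carrying only $\sigma$- or translation-legs. The paper instead multiplies by $\prod_j\psi_{n+j}^{c_{n+j}+1}$, pushes forward along $\oM_{g,n+\ell(\sigma)}\to\oM_{g,n}$, and only then restricts; non-principal graphs are then killed either by the $\psi_{n+j}$ factors or because their image remains in $\partial\oM_{g,n}$. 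Second, and more essential: once you have the vanishing of the principal-graph contribution, it comes multiplied by a topological field theory scalar $\omega^{r,\tau}_{g,N}(\hd_{\hat b_1},\ldots)$, and to deduce the vanishing of the expression in the theorem (which carries no such scalar) you must know that this scalar is \emph{nonzero}. Your sign analysis shows the scalar is independent of which legs take the antidiagonal entry, but not that it is nonzero; the paper proves this separately as Lemma~\ref{Lem:nonzero} via a positivity argument based on the tetrahedron description of $\omega^{r,\tau}_{0,3}$ in Proposition~\ref{Prop:QP}.
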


In the statement of Theorem \ref{Cor:Mg}, $p_{\ell(\sigma)+m}$   is  (as before) the forgetful map
\[
p_{\ell(\sigma)+m}:\oM_{g,n+\ell(\sigma)+m}\to\oM_{g,n}
\]
forgetting the last $\ell(\sigma)+m$ points.

%We will prove Corollary~\ref{Cor:Mg} in Section~\ref{Subsec:Mg}.

%In Section~\ref{Subsec:betti}, 

We use the relations of Theorem \ref{Cor:Mg} to bound the Betti numbers of the tautological ring of $\cM_g$. Let $P(n,k)$ denote the set of partitions of $n$ of length at most $k$.
\begin{theorem}\label{Thm:betti}
For $g \ge 2$ and $d \ge 0$,
$
\ \dim_\Q RH^d(\cM_g) \le |P(d,g-1-d)|.
$
\end{theorem}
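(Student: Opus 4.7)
The plan is to use Theorem \ref{Cor:Mg} with $n=0$ and $r=4$ to produce, for every partition $\mu$ of $d$ with $\ell(\mu) > g-1-d$, a tautological relation in $RH^{2d}(\cM_g)$ whose leading term in the partition-length filtration is $\kappa_\mu$. Since $R^*(\cM_g)$ is generated by the $\kappa$-classes, a triangular system of this form yields the bound $\dim_\Q RH^d(\cM_g) \le |P(d, g-1-d)|$ by linear algebra on the $|P(d)|$-dimensional $\Q$-vector space of $\kappa$-monomials of degree $d$.

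First, I expand each relation explicitly. For a partition $\sigma$ with parts not congruent to $3 \bmod 4$ and $d$ satisfying $4d > 2(g-1) + |\sigma|$ (parity is automatic for $r=4$ since $r-2$ is even), Theorem \ref{Cor:Mg} provides a tautological relation in $RH^{2d}(\cM_g)$. Using $\B_{4,\sigma_j+4}(T) = T \cdot \B_{4,\sigma_j}(T)$, the fact that $(T - T\B_{4,0})(T) = O(T^2)$, and the identity $p_*\bigl(\prod_i \psi_i^{e_i}\bigr) = \prod_i \kappa_{e_i - 1}$ valid on $\cM_g$ (because the $\psi$-classes pull back cleanly on the open moduli), each relation becomes an explicit $\Q$-linear identity among $\kappa$-monomials of degree $d$.

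Next, I identify the leading term. For $\sigma = (4\lambda_1, \dots, 4\lambda_{\ell(\lambda)})$ with $\lambda$ a partition of $d - m$, extracting the minimal-order contribution of each hypergeometric factor gives leading $\kappa$-monomial $\kappa_\lambda \cdot \kappa_1^m = \kappa_{\lambda \sqcup 1^m}$ of partition length $\ell(\lambda) + m$. The size condition reduces to $m > (g-1)/2$, so this directly realizes as leading term every $\kappa_\mu$ for which $\mu$ contains at least $\lceil g/2 \rceil$ parts equal to $1$. To handle bad partitions $\mu$ with $\ell(\mu) > g-1-d$ but fewer than $\lceil g/2 \rceil$ ones, I take $\Q$-linear combinations of $\sigma$-relations that share a leading term, cancelling it to expose sub-leading contributions; flexibility in choosing $a_j \in \{0,1,2\}$ within $\sigma_j = a_j + 4 b_j$ and the nonzero higher coefficients of $\B_{4, a_j}(T)$ provide the independence needed for such combinations.

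The main obstacle is verifying this final combinatorial step: one must show that suitable $\Q$-linear combinations of the $\sigma$-relations realize $\kappa_\mu$ as a leading term (in a refined monomial ordering) for \emph{every} partition $\mu$ with $\ell(\mu) > g-1-d$, not merely those with many ones. A careful analysis of the coefficients of $\B_{4,0}(T)$ and $\B_{4,a}(T)$ at all orders should establish the required linear independence. Once triangularity is established, every bad $\kappa_\mu$ is expressible in terms of good $\kappa_\nu$ with $\ell(\nu) \le g-1-d$, yielding the bound.
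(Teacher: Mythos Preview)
Your overall strategy matches the paper's: use the $r=4$, $n=0$ relations of Theorem~\ref{Cor:Mg}, restrict to $\sigma$ with even parts, and show that the resulting system of relations in the $\kappa$-polynomials is rich enough to force the dimension bound. The reduction to a linear-algebra statement about $\kappa$-monomials is also correct.

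The genuine gap is precisely the step you flag as the ``main obstacle.'' Your direct argument only produces relations whose leading term (in the length filtration) is $\kappa_\mu$ for $\mu$ containing at least roughly $g/2$ parts equal to~$1$; but you need a relation for \emph{every} $\mu$ with $\ell(\mu)>g-1-d$, and such $\mu$ can have no $1$'s at all (e.g.\ $g=4$, $d=3$, $\mu=(3)$). Your proposed fix---take linear combinations to cancel the common leading term and expose sub-leading ones---is not carried out, and there is no reason to expect a naive version of it to succeed: the sub-leading terms of the relations are governed by the full coefficient sequence of $\B_{4,0}$ and $\B_{4,2}$, and extracting a triangular system from them requires a nontrivial identity, not just ``linear independence of coefficients.''

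The paper resolves this with two ideas you are missing. First, it re-indexes the relations by partitions $\sigma$ of $d$ itself, assigning to $\sigma$ the relation built from $\sigma_-$ (reduce each part by~$1$); then the existence condition $2d\ge g+|\sigma_-|$ is exactly $\ell(\sigma)\ge g-d$. Second, and crucially, it does \emph{not} attempt to show the coefficient matrix $\mathsf{M}$ is triangular in any naive monomial order. Instead it constructs an explicit auxiliary matrix $\mathsf{A}$ (a weighted sum over partition refinements) and proves that $\mathsf{MA}$ is triangular with respect to the ordering by number of $1$'s. The proof of triangularity rests on a hypergeometric identity: after a change of variables the generating series $\widehat{D}_i$ become $\sin\bigl((2i+1)\theta\bigr)$ with $\theta=\tfrac12\sin^{-1}\sqrt{t}$, and the vanishing of $[\sin^{-e}\theta]_{t^{-1}}$ for even $e\ge 6$ is what forces the off-triangular entries of $\mathsf{MA}$ to vanish. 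This analytic input is the heart of the argument, and nothing in your proposal points toward it.
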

The bound of Theorem \ref{Thm:betti} implies the results
\begin{equation}
\text{dim}_{\mathbb{Q}}\, RH^{g-2}(\cM_g) \leq 1\, , \ \ \ \ \
\text{dim}_{\mathbb{Q}}\, RH^{>g-2}(\cM_g)=0 \label{ftftft} \, .
\end{equation}
For $d < g-2$,  Theorem \ref{Thm:betti} is new (no
non-trivial bounds were known before),
but is not expected to be sharp. For example,
Theorem \ref{Thm:betti} yields
$$\text{dim}_{\mathbb{Q}}\, RH^{g-3}(\cM_g) \leq 1+\lfloor\frac{g-3}{2}\rfloor\, ,$$
while the expectation based on calculations is
$$\text{dim}_{\mathbb{Q}}\, RH^{g-3}(\cM_g) =1\, .$$

Though our proofs use cohomological
methods, results (i) and (ii) of Janda discussed in Section \ref{s000}
imply Theorems \ref{Thm:TautRel1}-\ref{Thm:betti} all are valid in Chow.
In particular, Theorem \ref{Thm:betti} yields the bound
$$
\ \dim_\Q R^d(\cM_g) \le |P(d,g-1-d)|\, ,
$$
and \eqref{ftftft} specializes to Looijenga's result \eqref{ftft}.

\subsection{Polynomiality}\label{polly}

%$$
%W_{g,n}(a_1, \dots, a_n) \in H^*(\oM_{g,n}).
%$$ 

Let $a_1, \dots, a_n$ be non-negative integers satisfying the condition
$$\sum_{i=1}^n a_i = 2g-2\, .$$
If $a_i\leq r-2$ for all $i$, 
then Witten's $r$-spin class $W^r_{g,n}(a_1, \dots, a_n)$ is well-defined
and of degree
{\em independent} of the choice of $r$,
$$\D^r_{g,n}(a_,\ldots,a_n) = \frac{(r-2)(g-1) + \sum_{i=1}^n a_i}{r} =g-1\, .$$
We may reasonably ask{\footnote{The question was posed to
us by P. Rossi.}} here about the dependence of
$W^r_{g,n}(a_1, \dots, a_n)$ on $r$.

\begin{theorem}\label{Thm:poly}
For $\sum_{i=1}^n a_i = 2g-2$,
$$
r^{g-1}\cdot W^r_{g,n}(a_1, \dots, a_n)\in RH^{g-1}(\oM_{g,n})
$$
is a polynomial in $r$ for all sufficiently large $r$.
\end{theorem}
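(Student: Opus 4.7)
The plan is to apply the Givental--Teleman reconstruction theorem at a semisimple shift $\tau'$ of the Frobenius manifold $V_r$ -- the ``second semisimple point'' mentioned in the abstract -- and then unshift back to the origin to recover Witten's $r$-spin class. Concretely, I would write
$$
\W^{r,\tau'}_{g,n} \;=\; R^{\tau'}.\omega^{r,\tau'}_{g,n}
$$
as a sum over stable graphs, where each vertex contributes a value of the topological field theory $\omega^{r,\tau'}$, each leaf/half-edge carries an insertion of $R^{\tau'}(\psi)^{\pm 1}$, and the Euler/grading data is encoded in the shape of the $R$-matrix. The Witten class at the origin is then recovered by
$$
\W^{r}_{g,n}(e_{a_1}\otimes\cdots\otimes e_{a_n}) \;=\; \sum_{m\ge 0}\frac{(-1)^m}{m!}\, p_{m*}\,\W^{r,\tau'}_{g,n+m}\!\left(e_{a_1}\otimes\cdots\otimes e_{a_n}\otimes (\tau')^{\otimes m}\right),
$$
which is the standard inverse-shift formula.

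The core of the argument is to verify that, for large enough $r$, every ingredient in this formula depends polynomially on $r$ after the normalization by $r^{g-1}$. The choice of $\tau'$ must be made so that its components are themselves (at worst) polynomial in $r$. For the topological part I would verify that, in a suitable basis, each value $\omega^{r,\tau'}_{g,n}$ is polynomial in $r$ after multiplication by $r^{g-1}$ -- the prefactor $(r/2)^{g-1}$ in equation~(\ref{f99}) suggests this is the correct renormalization. For the $R$-matrix I would use the explicit hypergeometric form: the coefficient of $T^m$ in $\B_{r,a}(T)$ is
$$
\frac{(-1)^m}{m!\, 16^m}\prod_{i=1}^m\!\left((2i-1)^2 - \tfrac{4(a+1)^2}{r^2}\right),
$$
which is a polynomial in $r^{-2}$; with the appropriate rescaling of the $z$-variable associated to the second semisimple point, this should convert to honest polynomial dependence in $r$. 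The entries of $R^{\tau'}(z)$ at each fixed $z$-order would then be polynomials in $r$ for $r$ large.

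A degree-counting step then closes the argument. The degree constraint $\sum a_i = 2g-2$ forces $\D^r_{g,n}(a_1,\ldots,a_n)=g-1$ independently of $r$, so only finitely many stable graphs contribute at fixed $(g,n)$, and within each graph only finitely many assignments of $\psi$-powers to half-edges. The Euler characteristic bookkeeping matches the total $(r/2)^{g-1}$ factor from the vertices with the $r^{g-1}$ normalization in the theorem. The unshift sum over $m$ is a priori infinite, but the pushforwards $p_{m*}$ drop degree by $m$, so only finitely many terms survive in degree $g-1$; each surviving term involves finitely many copies of $\tau'$, which contribute only polynomially in $r$.

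The main obstacle is identifying the correct ``second'' semisimple shift $\tau'$ and proving that the associated $R$-matrix and topological field theory really do have the required polynomial dependence. At the first semisimple point $\tau=(0,\ldots,0,r\phi)$ treated in the introduction, the topological part involves $\sin\!\big((a_i+1)k\pi/r\big)$ and $\sin(k\pi/r)^{-1}$, which are transparently \emph{not} polynomial in $r$, so a different shift is essential. Finding a shift whose Euler-field eigenstructure makes the Givental $R$-matrix diagonal (or block-diagonal) with polynomial-in-$r$ entries -- and then checking that the trigonometric tangle in the TFT collapses to a polynomial after imposing $\sum a_i = 2g-2$ -- is the delicate part. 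Once that is in place, polynomiality in $r$ of $r^{g-1}\cdot W^r_{g,n}(a_1,\ldots,a_n)$ is the aggregate of the polynomial dependences of the finitely many graph contributions.
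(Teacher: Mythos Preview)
Your overall strategy---shift to a second semisimple point, apply Givental--Teleman, and check that every ingredient is polynomial in $r$---is the paper's strategy. But the proposal has a genuine gap at the technical heart of the argument, and it also mixes up the two shifts.

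First, the shift: the paper uses $\widetilde{\tau}=(0,r\phi,0,\ldots,0)$, i.e.\ along $e_1$. At this point the topological field theory is essentially $(r-1)^g$ times a congruence indicator (Proposition~\ref{Prop:TopFT2}), and the $R$-matrix entries are $[-r(r-1)\phi^{r/(r-1)}]^{-m}P_m(r,a)$ for certain explicit \emph{polynomials} $P_m(r,a)$ (Proposition~\ref{Prop:Rmatrix2}). The hypergeometric series $\B_{r,a}$ that you quote belong to the \emph{first} shift $\tau=(0,\ldots,0,r\phi)$ and play no role here; your observation that their coefficients are polynomial in $r^{-2}$ is correct but does not help, since the accompanying TFT at $\tau$ is not polynomial. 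Also, no inverse-shift sum is needed: because $\sum a_i=2g-2$ forces $\D^r_{g,n}=g-1$, the degree $g-1$ part of the shifted theory already \emph{equals} $W^r_{g,n}(a_1,\ldots,a_n)$ (Theorem~\ref{Thm:SecondShift}).

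The real gap is the following. In the stable graph expansion of $R.\omega^{r,\widetilde{\tau}}$, each edge carries a sum over the bivector $\sum_{a+b=r-2}e_a\otimes e_b$, so each graph contributes a sum over \emph{weightings} $\mathbf{a}:\mathsf{H}(\Gamma)\to\{0,\ldots,r-2\}$. The number of such weightings grows with $r$. Even granting that $P_m(r,a)$ is polynomial in $r$ and $a$ and that the TFT value is $(r-1)^{g_v}$, the fact that a sum of $\sim (r-1)^{|\mathsf{E}(\Gamma)|}$ polynomial terms is again polynomial in $r$ is not automatic and is exactly what must be proved. The paper isolates these sums as $S_{\Gamma,\mathbf{m}}$ and proves their polynomiality (Lemma~\ref{Lem:poly}) by invoking Pixton's combinatorial polynomiality result from \cite[Appendix]{JPPZ}; it then shows $S_{\Gamma,\mathbf{m}}$ is divisible by $(r-1)^{h^1(\Gamma)}$ (Lemma~\ref{Lem:div}) to cancel the $(r-1)^{1-h^1(\Gamma)}$ prefactor. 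Your finiteness argument (``only finitely many stable graphs \ldots\ only finitely many assignments of $\psi$-powers'') overlooks this weighting sum entirely, and without it the proof does not close.
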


Our proof of Theorem \ref{Thm:poly} is obtained by the analysis
of the shifted CohFT $\mathsf{W}^{r,\widetilde{\tau}}$
at the point
$$\widetilde{\tau}=(0,r\phi,0,\ldots, 0)\in V_r\,. $$
The CohFTs
$\mathsf{W}^{r,{\tau}}$ and 
$\mathsf{W}^{r,\widetilde{\tau}}$ behave differently and
yield different insights.
While our knowledge of the $R$-matrix at $\widetilde{\tau}$ is {\em not}
as explicit as at $\tau$, the property of polynomiality is 
easier to see at $\widetilde{\tau}$. 

In the Appendix (with F. Janda), we conjecture
the constant term of the polynomial of Theorem~\ref{Thm:poly} is
$(-1)^g$ times the class of the closure of the locus of 
holomorphic differentials with zero multiplicities given
by $(a_1,\ldots,a_n)$. In addition to a precise formulation,
the evidence for the conjecture and the connection
to the conjectures of \cite[Appendix]{FarP} are discussed in
the Appendix.

\subsection{Plan of the paper}
We start in Section \ref{Sec:sl2}
 with the proof of Theorem \ref{Thm:sl2}. The result plays a basic role in our
analysis of the $r$-spin theory in genus $0$. The study of
the shift  
$$\tau=(0,\ldots,0, r\phi)\in V_r$$
is presented in Section \ref{Sec:lastshift}.
Theorem \ref{Thm:Verlinde} is proven in Section \ref{d11d},
and the corresponding $R$-matrix is solved in terms of hypergeometric
series in Section \ref{Ssec:Rmatrix1}.
Section \ref{ttttt} concerns the tautological relations obtained
from the shifted CohFT $\mathsf{W}^{r,{\tau}}$. Theorems 
\ref{Thm:TautRel1}, \ref{Cor:Mg}, and \ref{Thm:betti} are proven in Sections
\ref{dddd4}, \ref{Subsec:Mg}, and \ref{Subsec:betti} respectively.
The study of
the shift  
$$\widetilde{\tau}=(0,r\phi,0,\ldots,0)\in V_r$$
is presented in Section \ref{pfpfpf}.
The polynomiality of the $R$-matrix is derived in Section \ref{Ssec:Rmatrix2},
and Theorem \ref{Thm:poly} is proven in Section \ref{polypoly}.

Two formulas for Witten's $r$-spin class are given:
Theorem \ref{Thm:rWitten} of Section \ref{dddd4} via the
$\tau$-shift and  Theorem \ref{Thm:SecondShift} of Section \ref{Ssec:Rmatrix2} via the $\widetilde{\tau}$-shift. The Appendix (with F. Janda) conjectures a connection
between 
Witten's $r$-spin class and the class of the moduli space
of holomorphic differentials.

\subsection{Acknowledgments}

We are grateful to A.~Buryak,
A.~Chiodo, E.~Clader, C.~Faber, G.~Farkas,
 J.~Gu\'er\'e, F.~Janda, Y.-P.~Lee, P.~Rossi, Y.~Ruan, and S.~Shadrin for 
 discussions about
Witten's $r$-spin theory. 
The {\em Workshop on Pixton's conjectures} at
ETH Z\"urich  in October 2014 played in an important
role in our collaboration. A.~P. and D.~Z. have been
frequent guests of the 
Forschungsinstitut f\"ur Mathematik (FIM) at ETHZ.
The conjecture of the Appendix was formulated at the conference {\em Moduli spaces of
holomorphic differentials} at Humboldt University in  Berlin 
in February 2016.

%F.~J. was partially supported by the Swiss National Science Foundation
%grant SNF-200021143274. 
R.~P. was partially supported by 
SNF-200021143\-274, SNF-200020162928, ERC-2012-AdG-320368-MCSK,
ERC-2017-AdG-786580-MACI,
SwissMAP, and
the Einstein Stiftung. 
A.~P. was supported by a fellowship from the Clay Mathematics Institute.
D.~Z. was supported by the grant ANR-09-JCJC-0104-01.
This project has received funding from the European Research
  council (ERC) under the European Union Horizon 2020 research and
  innovation program (grant agreement No. 786580).

\section{Representations of $\sl_2(\mathbb{C})$}
\label{Sec:sl2}
\subsection{Correlators}
Our goal here is to prove Theorem~\ref{Thm:sl2} relating Witten's class 
and the representation theory of $\sl_2=\sl_2(\mathbb{C})$.

\begin{definition}
Let $a_1,\ldots,a_n \in \{ 0, \dots, r-2 \}$ satisfy
 $$\sum_{i=1}^n a_i = (n-2)r - 2\, .$$
 The associated genus 0 {\em correlator}
is
$$
\blangle a_1, \dots, a_n \brangle^r =\int_{\oM_{0,n}} W^r_{0,n}(a_1, \dots, a_n)
\,\in \mathbb{Q}\,  .
$$
\end{definition}

\vspace{6pt}
\noindent Since $r$ can be deduced from $a_1,\ldots,a_n$, we will often drop
the superscript and denote the {correlator} by 
$\langle a_1, \dots, a_n \rangle$.

\subsection{The WDVV equation.}
Let $a,b,c,d,x_1,\ldots,x_k \in \{ 0, \dots, r-2 \}$ satisfy
$$a + b + c + d + \sum_{i=1}^k x_i = (k+1)r-2\, ,$$
 so $W^r_{0,4+k}(a,b,c,d,x_1,\ldots,x_k)$ is a class of 
 degree
$k$ on $\oM_{0,4+k}$.
Since the dimension of $\oM_{0,4+k}$ is $k+1$,
 we can cut $W^r_{0,k+4}(a,b,c,d,x_1,\ldots,x_k)$
once  with  boundary divisors. 
If we pull-back the WDVV relation from $\oM_{0,4}$ 
to $\oM_{4+k}$ via
the map forgetting the last $k$ markings, 
$$p_k:\oM_{4+k} \rightarrow \oM_{4}\, ,$$
and apply the CohFT splitting axiom for Witten's class,   
we obtain
\begin{equation} \label{Eq:WDVV}
\sum_{\substack{I\sqcup J = \{1,\ldots,k\} \\ * + \widehat{*} = r-2}}\blangle x_I,a,c,*\brangle \blangle x_J,b,d,\widehat{*} \brangle = \sum_{\substack{I\sqcup J = \{1,\ldots,k\} \\ * + \widehat{*} = r-2}} \blangle x_I,a,d,*\brangle \blangle x_J,b,c,\widehat{*}\brangle\, ,
\end{equation}
where $x_I$ and $x_J$ are the insertions 
$$(x_i : i \in I) \ \ \ \text{and} \ \ \ (x_j : j \in J)\, ,$$  
and $*$ and $\widehat{*}$ are non-negative integers with sum $r-2$.

Using the values \eqref{fred} of the 3-points correlators, 
we can rewrite the WDVV equation~\eqref{Eq:WDVV} as
\begin{multline*}
\langle a,c, b+d, x_1, \dots, x_k \rangle
+
\langle b,d, a+c, x_1, \dots, x_k \rangle\\
=
\langle a,d, b+c, x_1, \dots, x_k \rangle
+
\langle b,c, a+d, x_1, \dots, x_k \rangle
+ \mathsf{Q},
\end{multline*}
where, by convention, a correlator vanishes if any insertion exceeds $r-2$.
Here, $\mathsf{Q}$ is a sum of products 
of correlators with $4$ to $k+2$ insertions each. 
In particular, $\mathsf{Q}$ vanishes unless $k\geq 2$.

\begin{lemma} \label{455t}
The $3$-point evaluations
 \begin{equation*}
\blangle a_1,a_2,a_3\brangle = 
\left|
\begin{array}{cl}
1 & \mbox{ {\em if} } a_1+a_2+a_3 = r-2,\\
0 & \mbox{ {\em otherwise}}
\end{array}
\right.
\end{equation*}
together
with  the WDVV equation \eqref{Eq:WDVV} force the vanishing of all 
genus 0 correlators 
$$\blangle a_1, \dots, a_n \brangle$$ with $n \geq 4$ 
and at least one $0$ insertion.
\end{lemma}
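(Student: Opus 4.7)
The approach will be induction on $n \ge 4$, using the rewritten WDVV equation specialized to $a = c = 0$. By $S_n$-symmetry of Witten's class it suffices to show that the correlator
\[
X := \langle 0, b, d, x_1, \dots, x_k \rangle
\]
vanishes, where $k = n - 3 \ge 1$ and the insertions satisfy the degree constraint $b + d + \sum_{i=1}^k x_i = (n-2)r - 2$.

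Setting $a = c = 0$ in the rewritten WDVV displayed in the excerpt gives
\begin{multline*}
\langle 0, 0, b+d, x_1, \dots, x_k \rangle + \langle b, d, 0, x_1, \dots, x_k \rangle \\
= \langle 0, d, b, x_1, \dots, x_k \rangle + \langle b, 0, d, x_1, \dots, x_k \rangle + \mathsf{Q}.
\end{multline*}
The first term on the left vanishes by the paper's convention: the degree constraint combined with $x_i \le r - 2$ forces
\[
b + d \ge (n-2)r - 2 - (n-3)(r-2) = r + 2n - 8 \ge r > r - 2,
\]
placing $b + d$ outside the admissible range of insertions. By $S_n$-invariance the three remaining ``main'' correlators all equal $X$, so the identity collapses to $X = -\mathsf{Q}$.

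For the base case $n = 4$ one has $k = 1$, and the excerpt explicitly records that $\mathsf{Q} = 0$ whenever $k < 2$; hence $X = 0$. For the inductive step $n \ge 5$, the term $\mathsf{Q}$ is a linear combination of products of correlators coming from the $|I|, |J| \ge 1$ contributions to the original WDVV sum. After the specialization $a = c = 0$, every product on the LHS of WDVV contains the factor $\langle x_I, 0, 0, *\rangle$, and every product on the RHS contains factors $\langle x_I, 0, d, *\rangle$ and $\langle x_J, b, 0, \widehat{*}\rangle$; in each case at least one factor of each product is a correlator with between $4$ and $k + 2 = n - 1$ insertions and at least one zero insertion. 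The inductive hypothesis kills this factor, hence the whole product, so $\mathsf{Q} = 0$ and $X = 0$.

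The only genuinely non-routine step is hitting on the specialization $a = c = 0$, which simultaneously makes the unwanted correlator $\langle 0, 0, b+d, \vec{x}\rangle$ disappear via the convention and collapses the other main correlators to the single unknown $X$ by symmetry. Everything else is bookkeeping, supported by the elementary inequality $b + d > r - 2$, and I do not foresee any substantive obstacle.
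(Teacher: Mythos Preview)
Your proof is correct and uses essentially the same WDVV specialization as the paper (your choice $a=c=0$ is equivalent to the paper's choice $b=d=0$ by the obvious symmetry of \eqref{Eq:WDVV}). The only difference is cosmetic: the paper phrases the argument as an iteration that increases the number of $0$ insertions until reaching $\langle x,0,\dots,0\rangle$ with $x=(n-2)r-2>r-2$, whereas you observe the inequality $b+d>r-2$ directly and finish in one step---a mild streamlining of the same idea.
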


\begin{proof} Apply the WDVV equation to the class
$W^r_{0,n+1}(a,0,b,0, c_1, \dots, c_{n-3})$ with nonzero $a$ and~$b$. We obtain
\begin{multline*}
\langle a+b,0, 0,c_1, \dots, c_{n-3} \rangle
+
\langle a, b, 0,c_1, \dots, c_{n-3} \rangle\\
=
\langle a, b, 0, c_1, \dots, c_{n-3} \rangle
+
\langle a, b, 0, c_1, \dots, c_{n-3} \rangle
+ \mathsf{Q}
\end{multline*}
or, equivalently,
\begin{equation} \label{f23f}
\langle a, b, 0,c_1, \dots, c_{n-3} \rangle = 
\langle a+b,0, 0,c_1, \dots, c_{n-3} \rangle - \mathsf{Q}.
\end{equation}

If $n=4$, the quadratic term $\mathsf{Q}$ vanishes. Hence, we can
use \eqref{f23f} to increase the number of $0$ insertions from 1
to 2 and then 2 to 3. The nonzero insertion of the correlator
$\blangle x, 0,0,0 \brangle$
must satisfy
$$ x = 2r-2> r-2\, .$$
Hence $\blangle x, 0,0,0 \brangle=0$, and the Lemma is proved for $n=4$.

Assume $n\geq 5$.
In every term of $\mathsf{Q}$ in \eqref{f23f},
 there is a correlator with fewer than $n$ 
insertions and with a 0 insertion. 
By induction, such correlators vanish, so $\mathsf{Q}$
again vanishes.
We may use \eqref{f23f} as before to increase the
number of 0 entries until 
we arrive at 
\begin{equation}\label{nvvn}
\blangle x, \underbrace{0,0,0}_{n-1} \brangle 
\end{equation}
%
%The correlator $\langle a+b,0, 0,c_1, \dots, c_{n-3} \rangle$ has more zero ent%ries than $\langle a, b, 0,c_1, \dots, c_{n-3} \rangle$. 
Then $x=(n-2)r-2$ exceeds $r-2$, and the
correlator \eqref{nvvn} vanishes.  
\end{proof}

\begin{lemma} \label{nn33}
The correlators 
 \begin{equation*}
\blangle a_1,a_2,a_3\brangle = 
\left|
\begin{array}{cl}
1 & \mbox{ {\em if} } a_1+a_2+a_3 = r-2,\\
0 & \mbox{ {\em otherwise} }
\end{array}
\right.
\end{equation*}
$$
\hspace{-180pt}\blangle r-2,r-2,1,1\brangle  = \frac1{r} 
$$
%
%\begin{align*}
%\langle a,b,c \rangle &= 1  \mbox{ for } a+b+c=r-2,\\
%\langle r-2, r-2, 1, 1 \rangle &= 1/r
%\end{align*}
and the WDVV equation \eqref{Eq:WDVV} uniquely determine the values of 
{\em all} the other correlators in genus 0.
\end{lemma}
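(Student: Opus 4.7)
The plan is to induct on $n$, with the $n=3$ data as the base. For each $n \geq 4$, the WDVV equation of the preceding paragraph, taken with $k = n-3$, reads
\[
\langle a, c, b+d, \mathbf{x}\rangle + \langle b, d, a+c, \mathbf{x}\rangle = \langle a, d, b+c, \mathbf{x}\rangle + \langle b, c, a+d, \mathbf{x}\rangle + \mathsf{Q},
\]
where $\mathbf{x} = (x_1,\ldots,x_{n-3})$ and $\mathsf{Q}$ is a polynomial in correlators of arity at most $n-1$, hence already determined by the induction hypothesis. It therefore suffices to show that these linear relations, together with Lemma~\ref{455t} and the convention that a correlator with any entry exceeding $r-2$ vanishes, uniquely determine all $n$-point correlators from the single additional value $\langle r-2, r-2, 1, 1\rangle = 1/r$, which is needed only at $n=4$.

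Given a target $\langle a_1,\ldots,a_n\rangle$ with $a_1 \geq \cdots \geq a_n \geq 1$ and $\sum a_i = (n-2)r-2$, averaging over all pairs gives $a_1 + a_2 \geq (2/n)\sum a_i \geq r-1 > r-2$ whenever $n \geq 4$. I would apply WDVV with $a = a_1$, $c = a_2$, $b = 1$, $d = a_3 - 1$, $\mathbf{x} = (a_4,\ldots,a_n)$. The case $a_3 = 1$ occurs only at $n = 4$ and forces $a_1 = a_2 = r-2$, which is the base correlator; otherwise $a_3 \geq 2$. Since $a_1 + a_2 > r-2$, the second term on the left vanishes and the equation becomes
\[
\langle a_1,a_2,a_3,\mathbf{x}\rangle = \langle a_1, a_3-1, a_2+1, \mathbf{x}\rangle + \langle 1, a_2, a_1+a_3-1, \mathbf{x}\rangle + \mathsf{Q},
\]
with any correlator containing an out-of-range entry interpreted as zero. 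A direct computation shows that $\sum_i a_i^2$ strictly increases on every nonvanishing right-hand term: the first term by $2(a_2-a_3) + 2 \geq 2$, the second by $2(a_1-1)(a_3-1) \geq 2$ (using $a_1 \geq a_3 \geq 2$). Since $\sum a_i^2$ is bounded above on the finite set of valid partitions, iterating the identity terminates. At any terminal correlator both right-hand side contributions must vanish (else $\sum a_i^2$ could increase further), giving the correlator's value directly in terms of $\mathsf{Q}$---or, at $n = 4$ where $\mathsf{Q} = 0$, in terms of $\langle r-2, r-2, 1, 1\rangle$.

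The main obstacle is the combinatorial verification that this reduction is well founded and lands at the correct terminal correlator. The key monotonicity facts are the two $\sum a_i^2$ increments computed above; the remaining point to check is that, at $n = 4$, the unique partition of $2r-2$ into four parts in $[1, r-2]$ maximizing $\sum a_i^2$ is $(r-2, r-2, 1, 1)$, which follows from a short comparison against competing candidates such as $(r-2, r-3, 2, 1)$ (the gap in $\sum a_i^2$ is $2r-8 \geq 0$, with equality only when the two partitions coincide). With these checks in place, the induction closes and uniqueness of all genus 0 correlators follows.
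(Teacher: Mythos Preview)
Your argument is correct and uses the same WDVV specialization as the paper (with $a,b,c,d$ taken to be $a_1,1,a_2,a_3-1$, which up to the $(c,d)$-symmetry of WDVV is exactly the paper's choice $(a,1,c-1,b)$). The only differences are cosmetic: the paper keeps all three resulting $n$-point correlators and shows each is strictly larger in lexicographic order, whereas you first observe that $a_1+a_2>r-2$ for $n\ge 4$ to kill one of them, and then use $\sum a_i^2$ as your well-founded monotone in place of lex order. Both monotones work. Your final paragraph about $(r-2,r-2,1,1)$ maximizing $\sum a_i^2$ is not actually needed: the induction terminates simply because $\sum a_i^2$ strictly increases and is bounded, and the only correlator to which the reduction cannot be applied (i.e.\ the one with $a_3=1$) is the base datum at $n=4$ and does not exist for $n\ge 5$.
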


\begin{proof} Consider a correlator with $n\geq 4$ insertions. 
If the correlator has at least one 0 insertion, 
the correlator vanishes by Lemma \ref{455t}. If not, all
insertions are {\em positive}.

Let $\langle a, b, c, d_1, \dots, d_{n-3} \rangle$ 
be a correlator with positive insertions ordered in a non-increasing sequence. 
Assume also $c \geq 2$. For $n \geq 5$, the condition
$c\geq 2$ is automatic, while for $n=4$, the condition excludes only the
 single correlator 
$$\langle r-2, r-2, 1,1 \rangle = \frac{1}{r}\, .$$

By applying the WDVV equation \eqref{Eq:WDVV}  to Witten's $r$-spin class
$$
W^r_{0,n+1}(a,1,c-1,b, d_1, \dots, d_{n-3})\, ,
$$
we obtain the relation
\begin{multline} \label{x33x} 
\langle a+c-1, b, 1, d_1, \dots, d_{n-3} \rangle
+
\langle a, b+1, c-1, d_1, \dots, d_{n-3} \rangle \\
=
\langle a+b, c-1, 1, d_1, \dots, d_{n-3} \rangle
+
\langle a, b, c, d_1, \dots, d_{n-3} \rangle
+\mathsf{Q}\, .
\end{multline}
If we order the insertions of the correlators in non-increasing order, 
then our original correlator $\langle a, b, c, d_1, \dots, d_{n-3} \rangle$ is
lesser than the three others principal correlators of \eqref{x33x}
in  lexicographic order. Thus every correlator except $\langle r-2, r-2, 1,1 \rangle$ can be expressed via correlators greater in 
 lexicographic order and correlators with fewer entries. 
\end{proof}

\subsection{Proof of Theorem~\ref{Thm:sl2}.} 
A straightforward calculation of the $\sl_2$ side shows that the formula of
Theorem \ref{Thm:sl2} is true for $n \le 4$.
By Lemma \ref{nn33},
  the WDVV equation \eqref{Eq:WDVV}
uniquely determines the correlators for $n \ge 5$. 
To finish the proof, we need only to show that the
$\sl_2$ side of the formula of Theorem \ref{Thm:sl2}
 satisfies the WDVV equation \eqref{Eq:WDVV}.

The WDVV equation \eqref{Eq:WDVV} starts with a choice of insertions
$$a,b,c,d,x_1,\ldots,x_k\, .$$
For convenience, we replace the above insertions
by their  $r-2$ complements:
$$a \mapsto r-2-a\, ,\ \ \ b \mapsto r-2-b\, , \ \ \ c\mapsto r-2-c\, ,
\ \ \ d \mapsto r-2-d\, , \ \ \ x_i \mapsto r-2-x_i\, .$$
Then, we have
\[
a + b + c + d + \sum_{i = 1}^kx_i = 3r-6-2k\, .
\]
We must prove the identity 
\begin{equation*}
\begin{split}
\sum_{I\sqcup J = \{1,\ldots,k\}} |I|!&|J|!\Blangle x_I,a,c,2r-4-a-c-2|I|-\sum_{i\in I}x_i\Brangle^{\sl_2}\\
&\hspace{8pt} \cdot \Blangle x_J,b,d,2r-4-b-d-2|J|-\sum_{j\in J}x_j\Brangle^{\sl_2}\\
= \sum_{I\sqcup J = \{1,\ldots,k\}} |I|!&|J|!\Blangle x_I,a,d,2r-4-a-d-2|I|-\sum_{i\in I}x_i\Brangle^{\sl_2}\\
&\hspace{8pt} \cdot \Blangle x_J,b,c,2r-4-b-c-2|J|-\sum_{j\in J}x_j\Brangle^{\sl_2},
\end{split}
\end{equation*}
where we define
\[
\Blangle a_1,\ldots,a_k\Brangle^{\sl_2} = \dim \Bigl[ \rho_{a_1} \otimes \cdots \otimes \rho_{a_k} \Bigr]^{\sl_2}\, .
\]

The following formula for the dimensions of the $\sl_2$-invariants follows easily from the multiplication rule for the $\rho_a$:
\[
\Blangle a_1,\ldots,a_n,2s-a_1-\cdots-a_n \Brangle^{\sl_2} = \left[(1-t^{-1})\prod_{i=1}^n\frac{1-t^{a_i+1}}{1-t}\right]_{t^s}.
\]
Applying the above formula, we can rewrite the desired identity as
{\footnotesize{
\begin{equation*}
\begin{split}
&\sum_{I\sqcup J = \{1,\ldots,k\}} \!\!\!\! |I|!|J|!\left[
\frac{(1-t^{a+1})(1-t^{c+1})(1-u^{b+1})(1-u^{d+1})}{(1-t)(1-u)}\prod_{i\in I}\frac{t-t^{x_i+2}}{1-t}
\prod_{j\in J}\frac{u-u^{x_j+2}}{1-u}
\right]_{t^{r-1}u^{r-1}}
\\
&= \!\!\!\! \sum_{I\sqcup J = \{1,\ldots,k\}}  \!\!\!\! |I|!|J|!\left[
\frac{(1-t^{a+1})(1-t^{d+1})(1-u^{b+1})(1-u^{c+1})}{(1-t)(1-u)}\prod_{i\in I}\frac{t-t^{x_i+2}}{1-t}
\prod_{j\in J}\frac{u-u^{x_j+2}}{1-u}
\right]_{t^{r-1}u^{r-1}}.
\end{split}
\end{equation*}}}

Subtracting one side from the other and moving factors outside the sum, we 
must, equivalently, show that the coefficient of $t^{r-1}u^{r-1}$ in
\begin{equation*}
\begin{split}
(1-t^{a+1})&(1-u^{b+1})\frac{(1-t^{c+1})(1-u^{d+1})-(1-t^{d+1})(1-u^{c+1})}{(1-t)(1-u)} \\
&\cdot \sum_{I\sqcup J = \{1,\ldots,k\}} |I|!|J|!\prod_{i\in I}\frac{t-t^{x_i+2}}{1-t}
\prod_{j\in J}\frac{u-u^{x_j+2}}{1-u}
\end{split}
\end{equation*}
vanishes. Interchanging $t$ and $u$ and adding, we can replace the
latter polynomial by a symmetric one:
{\footnotesize{
\begin{equation*}
\left((1-t^{a+1})(1-u^{b+1})-(1-t^{b+1})(1-u^{a+1})\right) \!
 \frac{(1-t^{c+1})(1-u^{d+1})-(1-t^{d+1})(1-u^{c+1})}{(1-t)(1-u)}
\end{equation*}}}
$$\cdot \sum_{I\sqcup J = \{1,\ldots,k\}} |I|!|J|!\prod_{i\in I}
\frac{t-t^{x_i+2}}{1-t}
\prod_{j\in J}\frac{u-u^{x_j+2}}{1-u}\, .
$$
Next, we apply the change of variables
 $$t\mapsto \frac{t}{v} \ \ \ {\text{and}} \ \ \ u \mapsto \frac{u}{v}$$ and 
multiply the outcome by $v^m$ where 
$$m = a+b+c+d+3+\sum_{i=1}^k(x_i+2) = 3r-3\, .$$
Then, we replace $a,b,c,d$ by $a-1,b-1,c-1,d-1$ and $x_i$ by $x_i-2$. 
After these transformations,
the required identity is that the coefficient of 
$t^{r-1}u^{r-1}v^{r-1}$ vanishes in the polynomial
{\footnotesize{
$$
\frac{(t^au^b-u^at^b-t^av^b+v^at^b+u^av^b-v^au^b)(t^cu^d-u^dt^c-t^cv^d+v^ct^d+u^cv^d-v^cu^d)}{t^2u-u^2t-t^2v+v^2t+u^2v-v^2u}
$$}}
$$\cdot (t-u)v\sum_{I\sqcup J = \{1,\ldots,k\}} |I|!|J|!\prod_{i\in I}
\frac{t^{x_i}v-v^{x_i}t}{t-v}
\prod_{j\in J}\frac{u^{x_j}v-v^{x_j}u}{u-v}\, .
$$

The initial factor is invariant under cyclically permuting $t,u,v$. 
So we can instead prove the vanishing of the
 coefficient of $t^{r-1}u^{r-1}v^{r-1}$ in the polynomial
{\footnotesize{
$$
\frac{(t^au^b-u^at^b-t^av^b+v^at^b+u^av^b-v^au^b)(t^cu^d-u^dt^c-t^cv^d+v^ct^d+u^cv^d-v^cu^d)}{t^2u-u^2t-t^2v+v^2t+u^2v-v^2u}
$$}}
$$
\cdot \sum_{\text{cyc}}(t-u)v\sum_{I\sqcup J = \{1,\ldots,k\}} |I|!|J|!\prod_{i\in I}\frac{t^{x_i}v-v^{x_i}t}{t-v}
\prod_{j\in J}\frac{u^{x_j}v-v^{x_j}u}{u-v}\,  ,
$$
where the sum in the second term is
over the three cyclic permutations of $t,u,v$.
In fact, we claim a stronger vanishing:
\begin{equation}\label{cyclic identity}
\sum_{\text{cyc}}(t-u)v\sum_{I\sqcup J = \{1,\ldots,k\}} |I|!|J|!
\prod_{i\in I}\frac{t^{x_i}v-v^{x_i}t}{t-v}
\prod_{j\in J}\frac{u^{x_j}v-v^{x_j}u}{u-v} = 0
\end{equation}
for any $x_i\in\Z$.

The last step in the proof of Theorem \ref{Thm:sl2} is to
show the vanishing \eqref{cyclic identity}.
Let $A\sqcup B\sqcup C = \{1,\ldots,k\}$ be a partition of the $\{x_i\}$ into three sets of 
sizes 
$$a = |A|\,,\ \ \ b = |B|\, ,\ \ \  c = |C|\, .$$
 We can compute the coefficient of $\prod_{i\in A}t^{x_i}\prod_{i\in B}u^{x_i}\prod_{i\in C}v^{x_i}$ on
the left side \eqref{cyclic identity}:
\begin{equation*}
\sum_{\text{cyc}}(t-u)v\sum_{i+j = c}\binom{c}{i}(i+a)!(j+b)!\left(\frac{v}{t-v}\right)^a\left(\frac{v}{u-v}\right)^b\left(\frac{-t}{t-v}\right)^i\left(\frac{-u}{u-v}\right)^j,
\end{equation*}
where now $a,b,c$ are also cyclically permuted in correspondence with $t,u,v$.
Multiplying by $(-1)^{a+b+c}(t-u)^{a+b}(u-v)^{b+c}(v-t)^{c+a}$ and dividing by $a!b!c!$ then yields
\begin{equation} \label{kqq2}
\sum_{\text{cyc}}\sum_{i+j = c}(-1)^{b+i}\binom{a+i}{i}\binom{b+j}{j}t^iu^jv^{a+b+1}(t-u)^{a+b+1}(u-v)^i(v-t)^j\, .
\end{equation}

Now set $z_1 = t(u-v)$, $z_2 = u(v-t)$, $z_3 = v(t-u)$, multiply by $y_1^ay_2^by_3^c$, and sum 
 \eqref{kqq2} over all non-negative integers $a,b,c$. The result is
\begin{equation*}
\sum_{\text{cyc}}\frac{z_3}{(1+z_1y_3-z_3y_1)(1+z_3y_2-z_2y_3)}\, ,
\end{equation*}
which expands to
\begin{equation} \label{eeww}
\frac{z_1+z_2+z_3}{(1+z_1y_3-z_3y_1)(1+z_2y_1-z_1y_2)(1+z_3y_2-z_2y_3)}\, .
\end{equation}
Since $z_1+z_2+z_3 = tu-tv+uv-ut+vt-vu = 0$, so \eqref{eeww} vanishes, and the
identity \eqref{cyclic identity} holds. \qed

\bigskip

%Later we will also need the following statement.

\begin{proposition} \label{Cor:vanishing}
A genus 0, $n$-point correlator vanishes if there is an insertion  less than $n-3$.
\end{proposition}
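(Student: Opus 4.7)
The plan is to deduce the vanishing directly from Theorem \ref{Thm:sl2} by reading off a size constraint on the tensor factors. Set $b_i = r-2-a_i \in \{0,\ldots,r-2\}$. The dimension condition $\sum_i a_i = (n-2)r - 2$ translates into
\[
\sum_{i=1}^n b_i \;=\; n(r-2) - (n-2)r + 2 \;=\; 2(r-n+1)\, .
\]
I would first record this identity, as it controls the total ``weight'' available in the tensor product on the $\sl_2$ side.

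Next, suppose some insertion satisfies $a_i < n-3$; equivalently $b_i \ge r-n+2$. Then
\[
\sum_{j\ne i} b_j \;=\; 2(r-n+1) - b_i \;\le\; 2(r-n+1) - (r-n+2) \;=\; r-n\, ,
\]
so in particular $b_i > \sum_{j\ne i} b_j$. I would state this arithmetic bound as the key step.

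The rest is an application of the Clebsch-Gordan rule recalled just before Theorem \ref{Thm:sl2}: $\rho_k\otimes\rho_l = \rho_{|k-l|}\oplus\cdots\oplus\rho_{k+l}$. Iterating, every irreducible summand of $\bigotimes_{j\ne i}\rho_{b_j}$ is a $\rho_m$ with $m\le \sum_{j\ne i} b_j$. Tensoring with $\rho_{b_i}$, every summand of $\bigotimes_{j=1}^n\rho_{b_j}$ is a $\rho_m$ with
\[
m \;\ge\; b_i - \sum_{j\ne i} b_j \;\ge\; (r-n+2)-(r-n) \;=\; 2\, ,
\]
so the trivial representation $\rho_0$ does not appear. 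Hence
\[
\dim\bigl[\rho_{b_1}\otimes\cdots\otimes\rho_{b_n}\bigr]^{\sl_2} = 0\, ,
\]
and Theorem \ref{Thm:sl2} yields $\langle a_1,\ldots,a_n\rangle^r = 0$.

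There is essentially no obstacle here; Theorem \ref{Thm:sl2} converts the whole question into a one-line weight count, and the only delicate point is making sure the inequality $b_i > \sum_{j\ne i} b_j$ is strict enough to force the lowest $\rho_m$ in the product to have $m\ge 1$, which is exactly what the hypothesis $a_i < n-3$ (rather than $a_i \le n-3$) provides.
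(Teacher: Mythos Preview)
Your proof is correct and follows essentially the same route as the paper's: both pass to $b_i=r-2-a_i$, compute $\sum b_i = 2(r+1-n)$, observe that $a_i<n-3$ forces $b_i$ to exceed the sum of the remaining $b_j$, and conclude via Theorem~\ref{Thm:sl2} that the tensor product has no $\sl_2$-invariants. The only difference is that you spell out the Clebsch--Gordan step explicitly, whereas the paper leaves it as a one-line assertion.
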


\begin{proof} Consider the correlator 
\begin{equation}\label{ppaapp}
\blangle a_1, \dots, a_n \brangle\,, \ \   \ \ \ \ \sum_{i=1}^n a_i = r(n-2)-2\, .
\end{equation}
 Let $b_i = r-2-a_i$, so  
$\sum_{i=1}^n b_i = 2(r+1-n)$. If one of the $b_i$ is greater than the sum of all others, 
then the representation
$\otimes_{i=1}^n \rho_{b_i}$
has no invariant vectors and the correlator \eqref{ppaapp} vanishes by Theorem~\ref{Thm:sl2}. 
Thus the greatest possible value of $b_i$ in a nonzero correlator is equal to $r+1-n$. In other words, the smallest possible value of $a_i$ is $n-3$. 
\end{proof}

\section{The semisimple point $\tau = (0, \dots, 0, r \phi)$}
\label{Sec:lastshift}

\subsection{Special shift}
The last basis vector $e_{r-2}\in V_r$ plays a special role in Witten's $
r$-spin theory: $e_{r-2}$  corresponds,
 via the formula of Theorem~\ref{Thm:sl2},
to the trivial representation
of $\sl_2$. The shift along the last coordinate yields simpler expressions for the quantum product and the $R$-matrix which are calculated
in Sections \ref{d11d} and \ref{Ssec:Rmatrix1} respectively.
The associated topological field theory is
calculated in Sections \ref{Ssec:TopFT1} and \ref{Ssec:idempotents}.

\subsection{The quantum product (proof of Theorem~\ref{Thm:Verlinde})}
\label{d11d}

We study the Frobenius algebra at the point
$$\tau = (0, \dots, 0, r \phi)\in V_r$$
and find an isomorphism with the Verlinde fusion algebra for $\sl_2$ of level $r$. 
%This will prove Theorem~\ref{Thm:Verlinde}.

Let $a+b+c = r-2+2k$. By Theorem~\ref{Thm:sl2}, we have
\begin{equation}\label{nnee33}
W^r_{0,3+k}(a,b,c,\underbrace{r-2, \dots, r-2}_k) = 
\left|
\begin{array}{ll}
\frac{k!}{r^k} & \mbox{if} \quad  \min(a,b,c) \geq k,\\
0 & \mbox{otherwise}.
\end{array}
\right.
\end{equation}
The {\em quantum product} at $\tau$ is given  by
$$
\d_a \bullet_\tau \d_b = 
\sum_{k=\max(0,a+b-r+2)}^{\min(a,b)}
\phi^k \d_{a+b-2k}\, .
$$
The $k!$ in the evaluation \eqref{nnee33} is cancelled by the $k!$ in the denominator of the Gromov-Witten potential,
 and  the $r^k$ in the evaluation is cancelled by $r$ factor in the last coordinate of $\tau$. 

To simplify the computations, we introduce the new frame{\footnote{The metric
is $\eta( \hd_a,\hd_b) = \delta_{a+b,r+2}$.}}
$$
\hd_a = \phi^{-(2a-r+2)/4} \d_a\, .
$$
Then, the quantum product assumes a slightly simpler form:
\begin{equation} \label{Eq:QuantumProd1}
\hd_a \bullet_\tau \hd_b = 
\phi^{(r-2)/4} \sum_{k=\max(0,a+b-r+2)}^{\min(a,b)}
\hd_{a+b-2k}.
\end{equation}
We have proven the following result.

\begin{proposition}\label{Prop:QP}
The coefficient of $\hd_c$ in the quantum product 
$\hd_a \bullet_\tau \hd_b$ at the point $\tau$ is equal to either to $\phi^{(r-2)/4}$ or to~$0$. 
The coefficient is equal to $\phi^{(r-2)/4}$ if and only if
 $a+b+c$ is even and the point $(a,b,c)$ lies inside the tetrahedron 
\begin{align*}
& a+b \geq c, \\
& a+c \geq b, \\
& b+c \geq a, \\
& a+b+c \leq 2r-4.
\end{align*}
\end{proposition}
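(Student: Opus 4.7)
The proposition is an unpacking of the explicit quantum product formula \eqref{Eq:QuantumProd1} that was just established. My plan is to read off the coefficient of $\hd_c$ in the sum on the right-hand side of \eqref{Eq:QuantumProd1}, and then translate each of the four bounds on the summation index $k$ into a linear inequality on the triple $(a,b,c)$.

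The first observation I would make is that the terms in \eqref{Eq:QuantumProd1} are indexed by an integer $k$, and each $k$ contributes $\phi^{(r-2)/4}\hd_{a+b-2k}$ with a value of $a+b-2k$ determined by $k$. Distinct $k$ give distinct values, so the coefficient of $\hd_c$ is either $0$ or $\phi^{(r-2)/4}$, depending on whether the equation $c = a+b-2k$ has a (unique) solution in the allowed range. Solving for $k$ gives $k=(a+b-c)/2$, which is an integer precisely when $a+b+c$ is even. This already yields the dichotomy asserted in the proposition and the parity condition.

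Next I would substitute $k=(a+b-c)/2$ into each of the four constraints $0\le k$, $a+b-r+2\le k$, $k\le a$, and $k\le b$, and rewrite each as an inequality on $(a,b,c)$. A direct calculation gives the correspondence
\begin{align*}
k\ge 0 &\ \Longleftrightarrow\ a+b\ge c,\\
k\le b &\ \Longleftrightarrow\ b+c\ge a,\\
k\le a &\ \Longleftrightarrow\ a+c\ge b,\\
k\ge a+b-r+2 &\ \Longleftrightarrow\ a+b+c\le 2r-4,
\end{align*}
which are exactly the four tetrahedron inequalities of the proposition. Conversely, if all four inequalities and the parity condition hold, then $k=(a+b-c)/2$ is a nonnegative integer satisfying the bounds of \eqref{Eq:QuantumProd1}, and the coefficient of $\hd_c$ is $\phi^{(r-2)/4}$.

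There is essentially no obstacle here beyond careful bookkeeping; the only mild subtlety is pairing the two bounds $k\le a$ and $k\le b$ with the two symmetric inequalities $a+c\ge b$ and $b+c\ge a$ in the correct way, since these swap under $a\leftrightarrow b$. Once that matching is made, the proposition follows from \eqref{Eq:QuantumProd1} by rearrangement of inequalities.
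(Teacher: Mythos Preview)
Your proof is correct and is exactly the unpacking the paper intends: the paper simply states the proposition immediately after deriving \eqref{Eq:QuantumProd1} with the remark ``We have proven the following result,'' leaving the translation of the summation bounds into the tetrahedron inequalities implicit. Your explicit matching of each bound on $k$ with the corresponding inequality on $(a,b,c)$ is precisely what is needed, and the bookkeeping is accurate.
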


The tetrahedron is represented in Figure~\ref{Fig:tetrahedron}.
To help the reader visualize the constraint in Propositions \ref{Prop:QP},
 we have represented on the right the values of $a$ and $b$ for which
 the quantum product $\hd_a \bullet \hd_b$ contains the term $\hd_c$ for a fixed $c$ (a horizontal section of the tetrahedron with the parity condition taken into account).

\begin{figure}[h]
\begin{center}
\ 
\input{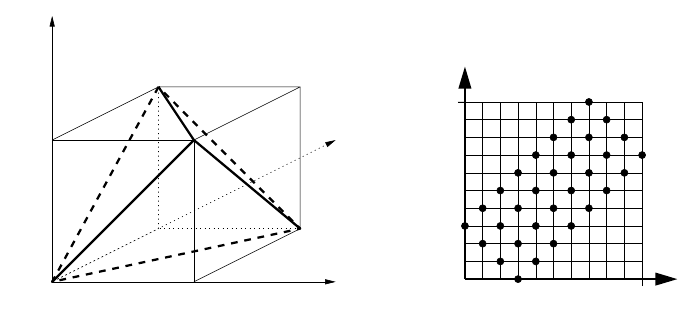tex_t}
\end{center}
\caption{Quantum product: nonzero coefficients} \label{Fig:tetrahedron}
\end{figure}

The {\em Verlinde algebra} of level~$r$ for $\sl_2$ is spanned by the weights of $\sl_2$ from $0$ to $r-2$. The coefficient of $c$ in the product $a \bullet b$ is equal to the dimension of the $\sl_2$-invariant subspace of the 
representation $\rho_a \otimes \rho_b \otimes \rho_c$ provided the inequality 
$$a+b+c \leq 2r-4$$ is satisfied. 
The invariant subspace is easily seen to always have dimension 0 or 1. It is of dimension 1 if
and only if  $a+b+c$ is even and $(a,b,c)$ satisfy the triangle inequalities. 
The structure constants of the Verlinde algebra therefore coincide with those of the Frobenius algebra at 
$\tau = (0, \dots, 0, r)$. We have completed the proof of Theorem \ref{Thm:Verlinde}. \qed

\vspace{10pt}

Although the algebra structure of the Verlinde and Frobenius algebras are the same, 
the quadratic forms are different. The quadratic form for the Verlinde algebra is given by 
$$\eta^{\text{Ver}}_{a,b} = \delta_{a,b}\, ,$$
 while for the Frobenius manifold $V_r$ we have
 $$\eta^{\text{Frob}}_{a,b} = \delta_{a+b,r-2}\, .$$
The difference is related to the large automorphism group of the tetrahedron. The symmetry under permutations of $a, b, c$ of the structure constants of the algebra is expected. However, the tetrahedron has an extra symmetry obtained by replacing every $a$ by $r-2-a$ (central symmetry) and then changing the metric from $\eta^{\text{Ver}}$ to $\eta^{\text{Frob}}$ (a vertical flip).

\subsection{The $R$-matrix} \label{Ssec:Rmatrix1}
We compute here the $R$-matrix of the Frobenius manifold $V_r$ 
at the point $$\tau = (0, \dots, 0, r \phi)\, .$$
We do not know closed form expressions for the 
 coefficients of the $R$-matrix at {\em any} other point of $V_r$. 

The operator of quantum multiplication by the Euler field at $\tau$,
$$E= 2 \phi^{(r+2)/4}\, \hd_{r-2}\, ,$$ 
is  given 
in the frame $\{\hd_a\}$ by the matrix
$$
\xi = 
2 \phi^{r/2} 
\begin{pmatrix}
\nice 0 & \nice \cdots & \nice \cdots & \nice 0  & \nice 1\\
 \qquad  & \qquad & \qquad & \qquad & \qquad \\
\nice 0 & & \nice 0 & \nice 1 & 0 \\
\\
\nice \vdots & \nice \iddots & \nice \iddots  & \nice \iddots & \nice \vdots \\
\\
\nice 0 & \nice 1 & \nice 0  &  & \nice 0 \\
\\
\nice 1 & \nice 0 & \nice \cdots  & \nice \cdots & \nice 0 
\end{pmatrix}\, .
$$

\vspace{5pt}
\noindent The nonvanishing coefficients{\footnote{In our convention
for matrix coefficients, the superscript is the row
index and the subscript is the column index.}}  are
$$
\xi^a_{r-2-a} = 2 \phi^{r/2}\, .
$$
In the same frame, the shifted degree operator is
$$
\mu = \frac1{2r}
\begin{pmatrix}
\nice -(r-2) & \nice 0 & \nice \cdots & \nice \cdots & \nice 0  \\
 \qquad & \qquad  & \qquad  & \qquad & \qquad \\
\nice 0 & -(r-4) & \nice 0 &  & 0 \\
\\
\nice \vdots & \nice \ddots & \nice \ddots  & \nice \ddots & \nice \vdots \\
\\
\nice 0 &  & \nice 0  & r-4 & \nice 0 \\
\\
\nice 0  & \nice \cdots  & \nice \cdots & \nice 0 & \nice r-2 
\end{pmatrix}\, .
$$

\vspace{5pt}
\noindent The nonvanishing coefficients are
$$
\mu^a_a = \frac{2a-r+2}{2r}\, .
$$

Recall the hypergeometric series defined in Section \ref{trtr} for 
every integer $a \in \{0, \dots, r-2 \}$, 
$$
\B_{r,a}(T) = \sum_{m=0}^\infty
\left[
\prod_{i=1}^m 
\frac{\big((2i-1)r-2(a+1)\big)\big((2i-1)r+2(a+1)\big)}i 
\right] \!\! 
\left( - \frac{T}{16r^2} \right)^{\! m} \!\!\!.
$$
%$$
%\B_{r,a}(T) = \sum_{m=0}^\infty
%%\left[
%\prod_{i=1}^m 
%\frac{[(2i-1)r-2(a+1)][(2i-1)r+2(a+1)]}i 
%\right]
%\left( - \frac{T}{16r^2} \right)^m
%$$
%for $a \in \{0, \dots, r-2 \}$. 
We denote by $\Be_{r,a}$ and $\Bo_{r,a}$ the even and odd parts of the 
series~$\B_{r,a}$.

\begin{proposition}
The unique solution $R(z)=\sum_{m=0}^\infty R_m z^m \in \End(V_r)[[z]]$ of the equations
$$
[R_{m+1},\xi] = (m+\mu) R_m
$$
with the initial condition $R_0=1$ has coefficients
$$
R^a_a = \Be_{r,r-2-a}(\phi^{-r/2}z)\,, \ \ \ \  a \in \{0, \dots, r-2 \}
$$
on the main diagonal,
$$
R^{r-2-a}_a =  -\Bo_{r,a}(\phi^{-r/2}z)\,, \ \ \ \  
a \in \{0, \dots, r-2 \}
$$
on the antidiagonal (if $r$ is even, the coefficient at the intersection of both diagonals is 1), and $0$ everywhere else. 

The inverse matrix $R^{-1}(z)$ has coefficients
$$
(R^{-1})^a_a = \Be_{r,a}(\phi^{-r/2}z)\,, \ \ \ \ 
 a \in \{0, \dots, r-2 \}
$$
on the main diagonal,
$$
(R^{-1})^{r-2-a}_a = \Bo_{r,a}(\phi^{-r/2}z)\,, \ \ \ \ 
 a \in \{0, \dots, r-2 \}
$$
on the antidiagonal (if $r$ is even, the coefficient at the intersection of both diagonals is 1), and $0$ everywhere else. 
\end{proposition}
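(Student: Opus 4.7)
The plan is to exploit the block structure of $\xi$ and $\mu$ with respect to the decomposition $V_r = \bigoplus_{0 \le a < (r-1)/2} V^{(a)} \oplus V^{\mathrm{mid}}$, where $V^{(a)} = \mathrm{span}(e_a, e_{r-2-a})$ is two-dimensional and $V^{\mathrm{mid}} = \mathrm{span}(e_{(r-2)/2})$ is a one-dimensional block occurring only when $r$ is even. Both $\xi$ and $\mu$ preserve this decomposition: on each two-dimensional block, $\xi$ is the off-diagonal matrix with entries $2\phi^{r/2}$ and $\mu$ is the diagonal matrix $\mathrm{diag}(\nu_a,-\nu_a)$ with $\nu_a = (2a-r+2)/(2r)$. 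Hence the recursion $[R_{m+1},\xi]=(m+\mu)R_m$ decouples into independent $2\times 2$ systems, one per block, and it suffices to establish the formula block by block.

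On a two-dimensional block, I parameterize $R_m$ by its four entries $A_m,B_m,C_m,D_m$ (with $A_m,D_m$ on the diagonal and $B_m,C_m$ on the anti-diagonal) and expand the matrix equation into four scalar relations. Summing pairs of diagonally opposite relations yields the consistency conditions $D_m = -\frac{m+\nu_a}{m-\nu_a}A_m$ and $C_m = -\frac{m+\nu_a}{m-\nu_a}B_m$ for $m\ge 1$, and substituting them back produces the coupled two-term recursion
\[
A_{m+1} = \kappa_m B_m,\qquad B_{m+1} = \kappa_m A_m,\qquad \kappa_m = \frac{(m+\nu_a)(m+1-\nu_a)}{4\phi^{r/2}(m+1)}.
\]
With $A_0=D_0=1$ and $B_0=C_0=0$, this uniquely determines $R_m$ at every order, settling the uniqueness claim. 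Iteration gives $A_{2k+2}/A_{2k} = \kappa_{2k+1}\kappa_{2k}$, and substituting $\nu_a$ one verifies this product matches the ratio of consecutive even-index coefficients of $\Be_{r,r-2-a}(\phi^{-r/2}z)$ read off from the product formula defining $\B_{r,r-2-a}$. The parallel computation handles $B_{2k+1}$, and the one-dimensional block (where $\nu_a=0$ makes the recursion trivial and $\B_{r,(r-2)/2}=1$ by direct inspection) is immediate.

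For the inverse matrix I would use a determinantal shortcut rather than a term-by-term check. The recursion packages as the closed ODE $[R(z),\xi] = z^2 R'(z) + z\mu R(z)$; left-multiplying by $R^{-1}$ gives $\xi - R^{-1}\xi R = z^2 R^{-1}R' + z R^{-1}\mu R$, and taking traces cancels the $\xi$ contribution by cyclicity and reduces the $\mu$ contribution to $\mathrm{tr}(\mu)$, which vanishes since $\mu^a_a + \mu^{r-2-a}_{r-2-a}=0$ for every pair. Hence $z^2\,\mathrm{tr}(R^{-1}R')=0$, so by Jacobi's formula $\det R$ is constant, and since $R(0)=I$ we obtain $\det R \equiv 1$. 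The $2\times 2$ cofactor formula applied block by block then immediately yields the claimed entries of $R^{-1}$, with the diagonal entries swapped ($\Be_{r,r-2-a}\leftrightarrow \Be_{r,a}$) and the anti-diagonal signs flipped. The main obstacle I anticipate is the coefficient-level bookkeeping comparing $\kappa_{2k+1}\kappa_{2k}$ to the hypergeometric ratio (after the substitution $a\mapsto r-2-a$, which converts the factor $(2i-1)r\pm 2(a+1)$ into an interleaved product involving $(2i\pm 1)r$ and $(2i\mp 3)r$ shifted by $2(a+1)$); this is elementary but easy to bungle. Once done, the determinantal step delivers the inverse formula essentially for free.
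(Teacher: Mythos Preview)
Your approach is correct and genuinely different from the paper's. The paper verifies the claimed formulas directly: it writes out $[R_{m+1},\xi]^a_b$ and $((m+\mu)R_m)^a_b$ in the two nontrivial cases $a=b$ and $a+b=r-2$, plugs in the hypergeometric coefficients of $\B_{r,a}$, and checks equality by elementary manipulations; uniqueness is invoked as a black box from semisimplicity. For $R^{-1}$, the paper appeals to the general symplectic condition $R^{-1}(z)=R^*(-z)$ from Givental theory. Your route is more constructive and self-contained: the $2\times 2$ block reduction turns the matrix recursion into a scalar two-term recursion $A_{m+1}=\kappa_m B_m$, $B_{m+1}=\kappa_m A_m$, from which the hypergeometric series are \emph{derived} rather than verified, and the Jacobi/cofactor argument for the inverse avoids invoking the symplectic condition altogether. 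Your uniqueness argument is also more explicit than the paper's: you use the compatibility constraints at each level (which follow from solvability of the recursion at the next level) to pin down $R_{m+1}$ beyond what the commutator equation alone gives.

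One small gap to close: you only explicitly treat the diagonal blocks $V^{(a)}\otimes (V^{(a)})^*$. For the off-diagonal blocks $V^{(a)}\otimes (V^{(a')})^*$ with $a\ne a'$, the restrictions of $\xi$ to source and target still coincide (both are $c\sigma_1$), and $\mu$ still acts from the left with eigenvalues $\pm\nu_a$, so your four scalar equations and the consistency conditions go through verbatim; starting from $R_0=0$ on these blocks they force $R_m=0$ for all $m$. This is what confirms that $R$ has only diagonal and antidiagonal entries, and it also ensures that your determinantal argument (which in fact applies block by block, since each block satisfies the same ODE with restricted $\xi$ and $\mu$) gives $\det R|_{\text{block}}=1$ on each $2\times 2$ block separately, which is exactly what the cofactor step needs.
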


%R(z)  IN  THE  MATRIX FORM.
%$$
%R(z) = 
%\begin{pmatrix}
%\nice \Be_{r,r-2} & \nice 0 & \nice \cdots & \nice \cdots & %\nice 0 & \nice -\Bo_{r,r-2}  \\
% \qquad \quad & \qquad \quad & \qquad  \quad & \qquad %\quad & \qquad \quad \\
%\nice 0 & \Be_{r,r-3} & \nice 0 & \nice 0& -\Bo_{r,r-3} & 0 \\
%\\
%\nice \vdots & \nice 0 & \nice \ddots  & \nice \iddots & \nice %0 & \nice \vdots \\
%\\
%\nice \vdots & \nice 0 & \nice \iddots  & \nice \ddots & \nice %0 & \nice \vdots \\
%\\
%\nice 0 & \nice -\Bo_{r,1}  & \nice 0 & \nice 0 & \nice %\Be_{r,1} & \nice 0 \\
%\\
%\nice -\Bo_{r,0} & \nice 0  & \nice \cdots  & \nice \cdots & %\nice 0 & \nice \Be_{r,0} 
%\end{pmatrix}.
%$$
%$$
%R^{-1}(z) = 
%$$

\begin{proof}
The {\em uniqueness} of the solution follows from the semisimplicity of 
the Frobenius manifold $V_r$ at $\tau$ (proven
in Section \ref{Ssec:TopFT1}). We verify here that the $R$-matrix described in 
the Proposition is indeed a solution of the recursion
\begin{equation}\label{fpp2}
[R_{m+1},\xi] = (m+\mu) R_m
\end{equation}
with initial condition{\footnote{Here, $R_0=1$ denotes the
identity in $\End(V_r)$.}} $R_0=1$.
%This check is just a computation that can be skipped in first reading.

The coefficients of the commutator on the left side of
\eqref{fpp2} are given by
\begin{equation}\label{vvww}
[R_{m+1}, \xi]^a_b = 2\phi^{r/2} \Big( 
(R_{m+1})^a_{r-2-b} - (R_{m+1})^{r-2-a}_b \Big)\, .
\end{equation}
The right side of \eqref{vvww} vanishes unless $a=b$ or $a+b=r-2$. 
The same is true of $R_m$ and therefore of $(m+\mu)R_m$, since $\mu$ is a diagonal matrix. Thus, we have two cases to consider.

\vspace{10pt}
\noindent {\bf Case $a=b$.} We have
\begin{eqnarray*}
[R_{m+1}, \xi]_a^a& = & 
2\phi^{r/2} \big( (R_{m+1})^a_{r-2-a} - (R_{m+1})^{r-2-a}_a\big) \\
&
=&
2\phi^{r/2} [z^{m+1}] \big( -\Bo_{r,r-2-a}(\phi^{-r/2}z) + \Bo_{r,a}(\phi^{-r/2} z) \big) \, . 
\end{eqnarray*}
Using the definition of the $\Bo$ series, the last expression is
\begin{multline*}
  \delta^{\rm even}_m\, 2\phi^{r/2}  
\prod_{i=1}^{m+1}
\frac{\Big((2i-1)r-2(a+1)\Big)\Big((2i-1)r+2(a+1)\Big)}i 
\\
\cdot 
\left(\frac{(3+2m)r-2(a+1)}{(2m+1)r+2(a+1)}+1 \right)
\left( - \frac1{16r^2\,\phi^{r/2}} \right)^{m+1}\, ,
\end{multline*}
or, equivalently,
\begin{multline*}
\delta^{\rm even}_m \, 2\phi^{r/2}  \prod_{i=1}^{m+1}
\frac{\Big((2i-1)r-2(a+1)\Big)\Big((2i-1)r+2(a+1)\Big)}i  \\
\cdot 
\left(\frac{4 (m+1)r}
{(2m+1)r+2(a+1)}\right) \left( - \frac1{16r^2\,\phi^{r/2}} \right)^{m+1}\, .
\end{multline*}
 After further simplification, we conclude
%$$
%= - \delta^{\rm even}_m \frac{8  \phi^{r/2} (m+1)r}
%{(2m+1)r+2(a+1)} 
%\frac{[(2m+1)r-2(a+1)][(2m+1)r+2(a+1)]}{16 r^2 \phi^{r/2} (m+1)}
%\times
%$$
%$$
%\times \prod_{i=1}^m
%\frac{[(2i-1)r-2(a+1)][(2i-1)r+2(a+1)]}i 
%\left( - \frac1{16r^2\,\phi^{r/2}} \right)^m=
%$$
%$$
$$
[R_{m+1}, \xi]_a^a=
 - \delta^{\rm even}_m \,  
\left(\frac{(2m+1)r-2(a+1)}{2r}\right)
\cdot [z^m] \B_{r,a}\, .
$$
On the other hand, we have
\begin{eqnarray*}
[(m+\mu)R_m]^a_a  = \hspace{-6em}\\
&=& \left( m+\frac{2a-r+2}{2r} \right) (R_m)^a_a \\
&=& \delta_m^{\rm even}\, \left( \frac{(2m-1)r+2a+2}{2r}\right) 
\cdot [z^m] \B_{r,r-2-a} \\
&=&\delta_m^{\rm even} \, \left( \frac{(2m-1)r+2a+2}{2r}\right)  
\left(\frac{2(a+1)-(1+2m)r}{2(a+1)-(1-2m)r}\right)
\cdot [z^m] \B_{r,a}  \\
&=& - \delta^{\rm even}_m\, 
\left(\frac{(2m+1)r-2(a+1)}{2r}\right)
\cdot [z^m] \B_{r,a}\, .
\end{eqnarray*}
Therefore, equation \eqref{fpp2} is satisfied.

\vspace{10pt}
\noindent{\bf Case $a+b=r-2$.} We have
\begin{eqnarray*}
[R_{m+1}, \xi]^{r-2-a}_a & =& 
2\phi^{r/2} \big((R_{m+1})^{r-2-a}_{r-2-a} - (R_{m+1})^a_a\big) \\
&=& 
2\phi^{r/2} [z^{m+1}] \big( \Be_{r,r-2-a}(\phi^{-r/2}z) - \Be_{r,a}(\phi^{-r/2} z) \big)\, .
\end{eqnarray*}
Using the definition of the $\Bo$ series, the last expression is
\begin{multline*}
\delta^{\rm odd}_m\, 2\phi^{r/2}  
 \prod_{i=1}^{m+1}
\frac{\Big((2i-1)r-2(a+1)\Big)\Big((2i-1)r+2(a+1)\Big)}i \\
\cdot \left(\frac{2(a+1)-(3+2m)r}{2(a+1)+(2m+1)r}-1 \right)
\left( - \frac1{16r^2\,\phi^{r/2}} \right)^{m+1}\, ,
\end{multline*}
or, equivalently,
\begin{multline*}
-\delta^{\rm odd}_m \, 2\phi^{r/2}  \prod_{i=1}^{m+1}
\frac{\Big((2i-1)r-2(a+1)\Big)\Big((2i-1)r+2(a+1)\Big)}i  \\
\cdot 
\left(\frac{4 (m+1)r}
{(2m+1)r+2(a+1)}\right) \left( - \frac1{16r^2\,\phi^{r/2}} \right)^{m+1}\, .
\end{multline*}
 After further simplification, we conclude
%$$
%= - \delta^{\rm even}_m \frac{8  \phi^{r/2} (m+1)r}
%{(2m+1)r+2(a+1)} 
%\frac{[(2m+1)r-2(a+1)][(2m+1)r+2(a+1)]}{16 r^2 \phi^{r/2} (m+1)}
%\times
%$$
%$$
%\times \prod_{i=1}^m
%\frac{[(2i-1)r-2(a+1)][(2i-1)r+2(a+1)]}i 
%\left( - \frac1{16r^2\,\phi^{r/2}} \right)^m=
%$$
%$$
$$
[R_{m+1}, \xi]_a^a=
  \delta^{\rm odd}_m \,  
\left(\frac{(2m+1)r-2(a+1)}{2r}\right)
\cdot [z^m] \B_{r,a}\, .
$$
On the other hand, we have
\begin{eqnarray*}
[(m+\mu)R_m]^{r-2-a}_a &=& \left( m-\frac{2a-r+2}{2r} \right) (R_m)^{r-2-a}_a
\\
&=& \delta_m^{\rm odd} \frac{(2m+1)r-2(a+1)}{2r} \cdot [z^m] \B_{r,a}\, .
\end{eqnarray*}
Therefore, equation \eqref{fpp2} is satisfied.

\vspace{10pt}
The expression for $R^{-1}$ is obtained from the symplectic 
condition{\footnote{$R^*$ denotes the adjoint with respect
to the metric $\eta$ on $V_r$.}} 
$$R^{-1}(z) = R^*(-z)\, $$
for the $R$-matrix.
The symplectic condition implies the identities
$$
\B_{r,a}(T) \B_{r,r-2-a}(-T) + \B_{r,a}(-T) \B_{r,r-2-a}(T) =2,
$$
or, equivalently,
$$
\Be_{r,a}(T) \Be_{r,r-2-a}(T) - \Bo_{r,a}(T) \Bo_{r,r-2-a}(T) = 1.
$$
Of course, these identities can also be proved directly. 
\end{proof}

\subsection{The topological field theory} \label{Ssec:TopFT1}
We study next the topological field theory arising from the Frobenius algebra of $V_r$ at the point
 $$\tau = (0, \dots, 0, r \phi)\, .$$

\begin{proposition} \label{Prop:idempotents}
The basis of normalized idempotents of the quantum product~\eqref{Eq:QuantumProd1} is given by
$$
v_k = \sqrt{\frac2r} \; \sum_{a=0}^{r-2} \sin \left(
\frac{(a+1)k\pi}r \right) \; \hd_a, \qquad k\in\{ 1, \dots, r-1\}\, .
$$
More precisely, we have
$$
\eta(v_k, v_l) = (-1)^{k-1} \delta_{k,l}\, ,
\qquad \ \
v_k \bullet_\tau v_l = \phi^{(r-2)/4} 
\frac{\sqrt{r/2}}{\sin(\frac{k \pi}{r})} v_k \, \delta_{k,l}\, .
$$
\end{proposition}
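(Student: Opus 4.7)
The plan is to exhibit each $v_k$ as a joint eigenvector for quantum multiplication by every $\hd_a$, with eigenvalue $\phi^{(r-2)/4}\chi_a(k)$ where
\[
\chi_a(k) := \frac{\sin((a+1)k\pi/r)}{\sin(k\pi/r)}
\]
is the normalized $\sl_2$ character of $\rho_a$ at $e^{2k\pi i H/r}$. Both assertions will then reduce to the discrete sine orthogonality
\[
\sum_{c=1}^{r-1}\sin\!\Bigl(\frac{ck\pi}{r}\Bigr)\sin\!\Bigl(\frac{cl\pi}{r}\Bigr)=\frac{r}{2}\,\delta_{k,l},\qquad k,l\in\{1,\ldots,r-1\}.
\]

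I would first verify the metric identity. The rescaling $\hd_a=\phi^{-(2a-r+2)/4}\d_a$ gives $\eta(\hd_a,\hd_b)=\delta_{a+b,r-2}$, so substituting $b=r-2-a$ and setting $c=a+1$ collapses $\eta(v_k,v_l)$ to a single sum indexed by $c\in\{1,\ldots,r-1\}$. The reflection $\sin((r-c)l\pi/r)=(-1)^{l+1}\sin(cl\pi/r)$ together with discrete orthogonality then yields $\eta(v_k,v_l)=(-1)^{l+1}\delta_{k,l}=(-1)^{k-1}\delta_{k,l}$.

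For the eigenvector claim, I would begin with $a=1$. By formula~\eqref{Eq:QuantumProd1}, $\hd_1\bullet_\tau\hd_c=\phi^{(r-2)/4}(\hd_{c-1}+\hd_{c+1})$ under the convention $\hd_{-1}=\hd_{r-1}=0$, so the $\hd_c$-coefficient of $\hd_1\bullet_\tau v_k$ equals $\sqrt{2/r}\,\phi^{(r-2)/4}[\sin(c\theta)+\sin((c+2)\theta)]$ with $\theta=k\pi/r$. The sum-to-product identity $\sin(c\theta)+\sin((c+2)\theta)=2\cos\theta\sin((c+1)\theta)$ immediately gives $\hd_1\bullet_\tau v_k=2\phi^{(r-2)/4}\cos(k\pi/r)\,v_k$; the boundary cases $c=0$ and $c=r-2$ are absorbed because $\sin 0=\sin(k\pi)=0$. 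Since $\hd_1\bullet_\tau$ has $r-1$ distinct eigenvalues and the Frobenius algebra is commutative, each $v_k$ is then automatically an eigenvector of every $\hd_a\bullet_\tau$. To extract the eigenvalue $\lambda_a^k$, I would compute the $\hd_0$-component of both sides of $\hd_a\bullet_\tau v_k=\lambda_a^k v_k$: the only $b$ for which $\hd_a\bullet_\tau\hd_b$ has nonzero $\hd_0$-coefficient is $b=a$ (with coefficient $\phi^{(r-2)/4}$), yielding $\lambda_a^k=\phi^{(r-2)/4}\chi_a(k)$.

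Finally, expanding
\[
v_l\bullet_\tau v_k=\sqrt{\tfrac{2}{r}}\sum_{a=0}^{r-2}\sin\!\Bigl(\tfrac{(a+1)l\pi}{r}\Bigr)\hd_a\bullet_\tau v_k = \phi^{(r-2)/4}\frac{\sqrt{2/r}}{\sin(k\pi/r)}\sum_{c=1}^{r-1}\sin\!\Bigl(\tfrac{cl\pi}{r}\Bigr)\sin\!\Bigl(\tfrac{ck\pi}{r}\Bigr)\,v_k
\]
and applying the discrete orthogonality produces the scalar $\phi^{(r-2)/4}\sqrt{r/2}/\sin(k\pi/r)\cdot\delta_{k,l}$, matching the statement. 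The main bookkeeping subtlety is that $\hd_0$ is \emph{not} the algebra unit — the unit is $\d_0=\phi^{-(r-2)/4}\hd_0$, so $\hd_0\bullet_\tau v=\phi^{(r-2)/4}v$ in accord with $\chi_0(k)=1$ — and the various $\phi^{(r-2)/4}$ factors must be tracked carefully; once this is done the computation becomes purely trigonometric.
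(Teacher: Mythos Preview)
Your proposal is correct and takes a genuinely different route from the paper's own argument. The paper proceeds by brute force: it computes each of $\eta(v_k,v_k)$, $\eta(v_k,v_l)$, $v_k\bullet_\tau v_k$, and $v_k\bullet_\tau v_l$ directly. For the quantum products this means writing the coefficient of $\hd_c$ as a double sum over the tetrahedral region of Proposition~\ref{Prop:QP}, reparametrizing by $p=a+b+2$ and $q=a-b$, summing the resulting geometric progressions of complex exponentials, and simplifying; the $k\ne l$ case in particular requires introducing four auxiliary sums $A,B,C,D$ and checking $AB-CD=0$.

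Your approach is more structural. You single out the operator $\hd_1\bullet_\tau$, which by~\eqref{Eq:QuantumProd1} acts as the discrete Laplacian $\hd_c\mapsto\phi^{(r-2)/4}(\hd_{c-1}+\hd_{c+1})$ with Dirichlet boundary conditions. Its eigenvectors are the discrete sine waves $v_k$ with distinct eigenvalues $2\phi^{(r-2)/4}\cos(k\pi/r)$, and commutativity forces simultaneous diagonalization of all $\hd_a\bullet_\tau$. Extracting the general eigenvalue via the $\hd_0$-component (using that $\hd_a\bullet_\tau\hd_b$ contains $\hd_0$ only when $b=a$) is clean and avoids the double sums entirely; the remaining work is a single application of the discrete sine orthogonality. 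The paper's direct computation has the virtue of being completely self-contained and mechanical, but your argument is shorter, exposes the reason the sines appear (they diagonalize the discrete Laplacian), and makes the connection to the $\sl_2$ character $\chi_a(k)=\sin((a+1)k\pi/r)/\sin(k\pi/r)$ transparent.
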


\begin{proof} The idempotents of the Verlinde algebras are known for all semisimple Lie algebras and all levels~\cite{Verlinde}. The idempotents are automatically orthogonal to each other, so we 
only need to compute their scalar squares. Since the extraction of the 
elementary formulas for $\sl_2$ from the general case is not so simple, 
we check the statements of the Proposition independently by a series of 
elementary computations with 
trigonometric functions in Section~\ref{Ssec:idempotents}. 
\end{proof}

\begin{proposition} \label{Prop:QuantumProd1}
For $a_1, \dots, a_n \in \{0, \dots, r-2 \}$,
we have
\begin{multline*}
\omega^{r,\tau}_{g,n}(\hd_{a_1} \otimes \cdots \otimes \hd_{a_n}) =\\
\left(\frac{r}2\right)^{g-1}
\phi^{(r-2)(2g-2+n)/4} \; 
\sum_{k=1}^{r-1}
\frac
{(-1)^{(k-1)(g-1)}\prod\limits_{i=1}^n \sin \left(
\frac{(a_i+1)k\pi}r \right)}
{\left(\sin(\frac{k\pi}{r})\right)^{2g-2+n}}\, .
\end{multline*}
\end{proposition}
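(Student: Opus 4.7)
The plan is to reduce to the classical computation of a two-dimensional TFT in an idempotent basis, using Proposition~\ref{Prop:idempotents} as input.

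\textbf{Step 1: Inverse change of basis.} Starting from the expression
\[
v_k = \sqrt{2/r}\,\sum_{a=0}^{r-2}\sin\!\left(\frac{(a+1)k\pi}{r}\right)\hd_a
\]
given in Proposition~\ref{Prop:idempotents}, I would invert it using the standard orthogonality
\[
\sum_{a=0}^{r-2}\sin\!\left(\frac{(a+1)k\pi}{r}\right)\sin\!\left(\frac{(a+1)l\pi}{r}\right) = \frac{r}{2}\,\delta_{k,l},\qquad 1\le k,l\le r-1,
\]
to obtain
\[
\hd_a = \sqrt{2/r}\,\sum_{k=1}^{r-1}\sin\!\left(\frac{(a+1)k\pi}{r}\right)v_k.
\]

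\textbf{Step 2: TFT in the idempotent basis.} Set $\sigma_k=(-1)^{k-1}$ and $\mu_k = \phi^{(r-2)/4}\sqrt{r/2}/\sin(k\pi/r)$, so that by Proposition~\ref{Prop:idempotents}, $\eta(v_k,v_l)=\sigma_k\delta_{k,l}$ and $v_k\bullet_\tau v_l=\mu_k\,\delta_{k,l}\,v_k$. The true idempotents of $\bullet_\tau$ are $\epsilon_k = v_k/\mu_k$, with $\eta(\epsilon_k,\epsilon_k)=\Delta_k:=\sigma_k/\mu_k^2$. A standard argument (recursively applying the gluing axiom to decompose the genus $g$ surface into pairs of pants and handles, and using that $\epsilon_k\bullet_\tau\epsilon_k=\epsilon_k$ together with the fact that the handle operator is $\sum_k\Delta_k^{-1}\epsilon_k$) gives
\[
\omega^{r,\tau}_{g,n}(\epsilon_k^{\otimes n}) = \Delta_k^{1-g}, \qquad \omega^{r,\tau}_{g,n}(\epsilon_{k_1}\otimes\cdots\otimes\epsilon_{k_n})=0 \text{ if the } k_i \text{ are not all equal.}
\]
Converting via $v_k = \mu_k\epsilon_k$ yields
\[
\omega^{r,\tau}_{g,n}(v_{k_1}\otimes\cdots\otimes v_{k_n}) = \delta_{k_1\cdots k_n}\,\sigma_k^{1-g}\,\mu_k^{2g-2+n} = \delta_{k_1\cdots k_n}\,(-1)^{(k-1)(g-1)}\mu_k^{2g-2+n},
\]
since $(-1)^{(k-1)(1-g)}=(-1)^{(k-1)(g-1)}$.

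\textbf{Step 3: Substitute and simplify.} Plugging the expansion of $\hd_a$ into $\omega^{r,\tau}_{g,n}$ and applying multilinearity, the diagonal vanishing collapses the $n$-fold sum to a single sum over $k$:
\[
\omega^{r,\tau}_{g,n}(\hd_{a_1}\otimes\cdots\otimes\hd_{a_n}) = (2/r)^{n/2}\sum_{k=1}^{r-1}(-1)^{(k-1)(g-1)}\mu_k^{2g-2+n}\prod_{i=1}^n\sin\!\left(\frac{(a_i+1)k\pi}{r}\right).
\]
Inserting the explicit value of $\mu_k$ gives a factor $\phi^{(r-2)(2g-2+n)/4}$ and $(r/2)^{(2g-2+n)/2}/\sin(k\pi/r)^{2g-2+n}$; combining with the prefactor $(2/r)^{n/2}$ yields $(r/2)^{(2g-2+n)/2-n/2}=(r/2)^{g-1}$, reproducing \eqref{f99} exactly.

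\textbf{Expected difficulty.} There is no real obstacle once Proposition~\ref{Prop:idempotents} is granted; this is a bookkeeping calculation. The only delicate point is handling the sign $\sigma_k=(-1)^{k-1}$ in the metric — one must verify that the standard semisimple TFT formula still applies when the metric is only orthogonal (rather than orthonormal) in the idempotent basis, and then track the sign through the transition to get $(-1)^{(k-1)(g-1)}$ in the exponent of $(2g-2+n)$-type terms. The arithmetic identity $(2/r)^{n/2}\cdot(r/2)^{(2g-2+n)/2}=(r/2)^{g-1}$ is the clean reason the $n$-dependence in the overall power of $r/2$ disappears, recovering the CohFT-like Euler characteristic exponent $g-1$.
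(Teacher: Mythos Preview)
Your proposal is correct and takes essentially the same approach as the paper: compute $\omega^{r,\tau}_{g,n}$ on the idempotent basis using Proposition~\ref{Prop:idempotents} and the degeneration/gluing axioms, then invert the self-inverse change of basis $\hd_a\leftrightarrow v_k$. The only cosmetic difference is that the paper works directly with the normalized idempotents $v_k$ and tracks the sign $(-1)^{k-1}$ separately through the $3g-3+n$ nodes and $2g-2+n$ trinion pieces, whereas you pass to the true idempotents $\epsilon_k=v_k/\mu_k$ and quote the standard semisimple TFT formula $\omega_{g,n}(\epsilon_k^{\otimes n})=\Delta_k^{1-g}$; the two computations are literally the same once unpacked.
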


\begin{proof}
The topological field theory $\omega^{r,\tau}_{g,n}$ can be computed
by restricting the $\tau$-shifted $r$-spin theory $\W^{r,\tau}_{g,n}$ 
to $$[C]\in \oM_{g,n}\, ,$$
where $C$ is a completely degenerate curve with 
$2g-2+n$ rational components  
and $3g-3+n$ nodes.

By Proposition \ref{Prop:idempotents}, 
$\omega^{r,\tau}_{0,3}(v_i \otimes v_j \otimes v_k)$ vanishes unless the three indices coincide. 
For equal indices, we have 
\begin{equation}\label{fppaa}
\omega^{r,\tau}_{0,3}(v_k \otimes v_k \otimes v_k) = (-1)^{k-1}
\phi^{(r-2)/4} \frac{\sqrt{r/2}}{\sin(\frac{k\pi}{r})}\, .
\end{equation}
After applying the splitting axioms of the CohFT to
$$\omega^{r,\tau}_{g,n}(v_k \otimes \cdots \otimes v_k)\Big|_{[C]}\, ,$$
each node contributes the sign $(-1)^{k-1}$.
Including the sign in \eqref{fppaa}  associated
to each rational component, the total sign is
$$
(-1)^{(k-1)(2g-2+n)} \cdot (-1)^{(k-1)(3g-3+n)} = (-1)^{(k-1)(g-1)}\, .
$$
Hence, for $k\in \{1,\ldots, r-1\}$,
\begin{equation}\label{qwqw}
\omega^{r,\tau}_{g,n}(v_k \otimes \cdots \otimes v_k) =
\left(\sqrt{\frac{r}2}\right)^{2g-2+n}
\phi^{(r-2)(2g-2+n)/4} \; 
\frac
{(-1)^{(k-1)(g-1)}}
{\left(\sin(\frac{k\pi}{r})\right)^{2g-2+n}}\, .
\end{equation}

The change of basis matrix from $\hd_a$ to $v_k$ is self-inverse.{\footnote{The
self-inverse property follows easily from the trigonometric
identities used to calculate $\eta(v_k,v_l)$ in Section \ref{Prop:idempotents}.}} Therefore,
 we have
$$
\hd_a = \sqrt{\frac2r} \; \sum_{k=1}^{r-1} \sin \left(
\frac{(a+1)k\pi}r \right) \; v_k\, .
$$
The Proposition then follows from \eqref{qwqw}.
\end{proof}

\subsection{Proof of Proposition \ref{Prop:idempotents}.}
\label{Ssec:idempotents}

We provide here direct proofs of the claims of Proposition
 \ref{Prop:idempotents}. The methods are via simple manipulation
of trigonometric functions, but we were not able to find an
elementary reference.

\paragraph{The scalar square $\eta(v_k,v_k)$.} 
We have
\begin{eqnarray*}
\eta(v_k, v_k) &=& 
\frac{2}{r} \sum_{a=0}^{r-2} 
\sin\left( \frac{(a+1)k\pi}r \right)
\sin\left( \frac{(r-1-a)k\pi}r \right)\\
&=&
(-1)^{k-1}\, \frac{2}{r} \sum_{a=0}^{r-2} 
\sin^2\left( \frac{(a+1)k\pi}r \right) \\
&=& (-1)^{k-1}\, 
\frac{1}{r} \sum_{a=0}^{r-2} 
\left( 1 - \cos\left( \frac{2(a+1)k\pi}r \right) \right) \\
&=&{(-1)^{k-1}}\, \frac{1}{r} \Big( (r-1) - (-1)\Big) = (-1)^{k-1}\, .
\end{eqnarray*}

\paragraph{The scalar product $\eta(v_k,v_l)$ for $k\neq l$.}
We have
\begin{eqnarray*}
\eta(v_k, v_l) & \! = \!&
\frac{2}r \sum_{a=0}^{r-2} 
\sin\left( \frac{(a+1)k\pi}r \right) \sin\left( \frac{(r-1-a)l\pi}r \right)
\\ & \!= \!&
(-1)^{l-1}\, \frac{2}r \sum_{a=0}^{r-2} 
\sin\left( \frac{(a+1)k\pi}r \right) \sin\left( \frac{(a+1)l\pi}r \right)
\\
&\! =\! &
(-1)^{l-1}\,  \frac{1}r \sum_{a=0}^{r-2} 
\left[ \cos \left( \! \frac{(a+1)(k-l)\pi}r \!\right) - \cos\left(\! \frac{(a+1)(k+l)\pi}r \!\right) \right] \\
&\! = \!&
{(-1)^{l-1} }\, \frac{1}{r} \Big( (-1) - (-1)\Big) = 0\, .
\end{eqnarray*}

\paragraph{The quantum square $v_k \bullet_\tau v_k$.} 
The coefficient of $\hd_c$ in $v_k \bullet_\tau v_k$ is equal to
$$
\frac{2 \phi^{(r-2)/4}}r \sum_{a,b} \sin\left( \frac{(a+1)k\pi}r \right) \sin\left( \frac{(b+1)k\pi}r \right),
$$
where the sum runs over the set shown in Figure~\ref{Fig:tetrahedron}. 
The sum can be conveniently reparameterized by the
change of variables
 $$p = a+b+2\, , \ \ \ q = a-b\, .$$
 We obtain the following expression for the coefficient of $\hd_c$ in~$v_k \bullet_\tau v_k$:
\begin{multline}\label{kqqk}
\frac{2\phi^{(r-2)/4}}r \, \sum_{p=c+2}^{2r-2-c} \sum_{q=-c}^c 
\sin\left( \frac{(p+q)k\pi}{2r} \right) \sin\left( \frac{(p-q)k\pi}{2r} \right)
\\
= \frac{\phi^{(r-2)/4}}r \, \sum_{p=c+2}^{2r-2-c} \sum_{q=-c}^c \left(
\cos\left( \frac{qk\pi}r \right)- \cos \left( \frac{pk\pi}r \right)
\right),
\end{multline}
where the sums run over only over integers $p$ and $q$
which have the same parity as~$c$. The second
line of \eqref{kqqk} can be rewritten as
\begin{equation*}
(r-c-1) \frac{\phi^{(r-2)/4}}r  \sum_{q=-c}^{c} 
\Re \exp\left(\frac{iqk\pi}r \right)
-
 (c+1) \frac{\phi^{(r-2)/4}}r \sum_{p=c+2}^{2r-2-c} 
\Re \exp\left(\frac{ipk\pi}r \right),
\end{equation*}
where $\Re$ denotes the real part. The first term equals
$$
(r-c-1)\frac{\phi^{(r-2)/4}}{r}\ \Re \frac
{\exp\left(\frac{i(c+1)k\pi}{r}\right) - \exp\left(-\frac{i(c+1)k\pi}{r}\right)}
{\exp(\frac{ik\pi}{r}) - \exp(-\frac{ik\pi}{r})}\, ,
$$
and the second term equals
$$
-(c+1)  \frac{\phi^{(r-2)/4}}{r}\ \Re \frac
{\exp\left(\frac{i(2r-1-c)k\pi}{r}\right) - \exp\left(\frac{i(c+1)k\pi}{r}\right)}
{\exp(\frac{ik\pi}{r}) - \exp(-\frac{ik\pi}{r})}.
$$
Since $\exp\left(\frac{i(2r-1-c)k\pi}{r}\right)$ and 
$\exp\left(-\frac{i(c+1)k\pi}{r}\right)$ are equal, the
first and second terms  
 combine as
$$
\phi^{(r-2)/4}\ \Re  \frac
{\exp\left(\frac{i(c+1)k\pi}{r}\right) - 
\exp\left(-\frac{i(c+1)k\pi}{r}\right)}
{\exp(\frac{ik\pi}{r}) - \exp(-\frac{ik\pi}{r})}
= \phi^{(r-2)/4} \frac{\sin\frac{(c+1)k\pi}{r}}{\sin(\frac{k\pi}{r})}.
$$
Hence, we obtain
$$
v_k \bullet_\tau v_k = \frac{\phi^{(r-2)/4}}{\sin(\frac{k\pi}{r})} \sum_{c=0}^{r-2}
\sin\left(\frac{(c+1)k\pi}{r}\right)  \hd_c = 
\phi^{(r-2)/4}\frac{\sqrt{r/2}}{\sin(\frac{k\pi}{r})} v_k.
$$

\paragraph{The quantum product $v_k \bullet_\tau v_l$ for $k\neq l$.}
The coefficient of $\hd_c$ in $v_k \bullet_\tau v_l$ is 
\begin{equation}\label{dvvd}
\frac{2 \phi^{(r-2)/4}}r \sum_{a,b} \sin\left( \frac{(a+1)k\pi}r \right) \sin\left( \frac{(b+1)l\pi}r \right)\, ,
\end{equation}
where the sum runs over the set shown in Figure~\ref{Fig:tetrahedron}. As before, 
we set $$p = a+b+2\, , \ \ \ q = a-b$$ and write \eqref{dvvd} as
{
\small{
\begin{multline*}
\frac{2\phi^{\frac{r-2}4}}r \sum_{p=c+2}^{2r-2-c} \sum_{q=-c}^c 
\sin\left( \frac{(p+q)k\pi}{2r} \right) \sin\left( \frac{(p-q)l\pi}{2r} \right)= \\
 \frac{\phi^{\frac{r-2}4}}r \sum_{p=c+2}^{2r-2-c} \sum_{q=-c}^c \left[
\cos\left( \frac{q(k+l)+p(k-l)}{2r}\pi \right)- \cos \left( \frac{p(k+l)+q(k-l)}{2r} \pi \right)
\right]\, ,
\end{multline*}
}}

\noindent where the sums run over only over integers $p$ and $q$
which have the same parity (mod 2) as $c$.
The result can be rewritten as
{\small{$$
\frac{\phi^{\frac{r-2}4}}r\, \sum_{p=c+2}^{2r-2-c} \sum_{q=-c}^c \Re \left[
\exp\left(\! \frac{q(k+l)+p(k-l)}{2r}i\pi \!\right)- \exp \left(\! \frac{p(k+l)+q(k-l)}{2r} i\pi \!\right)
\right],
$$}}
where $\Re$ denotes the real part as before. 
After further transformation, we obtain
{\small{
\begin{multline*}
\frac{\phi^{(r-2)/4}}r\,  \Re 
\sum_{p=c+2}^{2r-2-c} \sum_{q=-c}^c 
\exp\left( \frac{q(k+l)}{2r}i\pi \right) \exp\left( \frac{p(k-l)}{2r}i\pi \right)\\
% \hspace*{7em} 
% \hspace*{7em} 
- 
\frac{\phi^{(r-2)/4}}r\,  \Re 
\sum_{p=c+2}^{2r-2-c} \sum_{q=-c}^c
\exp \left( \frac{p(k+l)}{2r} i\pi \right) \exp \left( \frac{q(k-l)}{2r} i\pi \right)\, .
\end{multline*}}}
Consider the following four functions:
\begin{eqnarray*}
A =\sum_{q=-c}^c 
\exp \!\left( \frac{q(k+l)}{2r}i\pi \right)\ \ & \!\!=& 
\frac{\exp\left(\frac{(c+1)(k+l)i\pi}{2r}\right) - \exp\left(-\frac{(c+1)(k+l)i\pi}{2r}\right)}{\exp\left(\frac{(k+l)i\pi}{2r}\right) - \exp\!\left(-\frac{(k+l)i\pi}{2r}\right)}\, , \\
B =\sum_{p=c+2}^{2r-2-c} 
\exp\!\left( \frac{p(k-l)}{2r}i\pi \right)  &\!\!=& 
\frac
{\exp\left(\frac{(2r-1-c)(k-l)i\pi}{2r}\right) -
\exp\left(\frac{(c+1)(k-l)i\pi}{2r}\right)}
{\exp\left(\frac{(k-l)i\pi}{2r}\right) - 
\exp\left(-\frac{(k-l)i\pi}{2r}\right)}\\
& \!\! =& - \frac
{\exp\left(\frac{(c+1)(k-l)i\pi}{2r}\right) 
- \exp\left(-\frac{(c+1)(k-l)i\pi}{2r}\right)}
{\exp\left(\frac{(k-l)i\pi}{2r}\right) 
- \exp\left(-\frac{(k-l)i\pi}{2r}\right)}\, ,\\
C=\sum_{q=-c}^c 
\exp\!\left( \frac{q(k-l)}{2r}i\pi \right) \ \ &\!\!=& 
\frac{\exp\left(\frac{(c+1)(k-l)i\pi}{2r}\right) - \exp\left(-\frac{(c+1)(k-l)i\pi}{2r}\right)}
{\exp\left(\frac{(k-l)i\pi}{2r}\right) 
- \exp\left(-\frac{(k-l)i\pi}{2r}\right)}\, , \\
D=\sum_{p=c+2}^{2r-2-c} 
\exp\!\left( \frac{p(k+l)}{2r}i\pi \right)  &\!\!=& 
\frac
{\exp\left(\frac{(2r-1-c)(k+l)i\pi}{2r}\right) -
\exp\left(\frac{(c+1)(k+l)i\pi}{2r}\right)}
{\exp\left(\frac{(k+l)i\pi}{2r}\right) - 
\exp\left(-\frac{(k+l)i\pi}{2r}\right)}  \\
& \!\!= & - \frac
{\exp\left(\frac{(c+1)(k+l)i\pi}{2r}\right) 
- \exp\left(-\frac{(c+1)(k+l)i\pi}{2r}\right)}
{\exp\left(\frac{(k+l)i\pi}{2r}\right) 
- \exp\left(-\frac{(k+l)i\pi}{2r}\right)}\, .
\end{eqnarray*}
Since $AB - CD = 0$, we conclude
$$v_k \bullet_\tau v_l=0$$ for $k\neq l$.

\section{Tautological relations} \label{ttttt}
\subsection{Proof of Theorem \ref{Thm:TautRel1}}
\label{dddd4}
By Teleman's classification~\cite{Teleman}, the semisimple CohFT 
$\W^{r,\tau}$ for
$$\tau=(0,\ldots, 0, r\phi) \in V_r$$
defined in Section \ref{frrz}
is given by Givental's action \cite{Givental} of the $R$-matrix computed in Section~\ref{Ssec:Rmatrix1} on the topological field theory $\omega^{r,\tau}$ 
computed in Section~\ref{Ssec:TopFT1}. 
By the degree bound, 
\begin{multline*}
\deg
\Bigl[ p_{m*} \W^r_{g,n+m}(e_{a_1} \otimes \cdots \otimes e_{a_n}\otimes
\tau \otimes \cdots \otimes \tau) \Bigr] \\
 \leq \ 
\frac{(g-1)(r-2) + \sum a_i + m(r-2)}{r} -m \\
 = \ \D_{g,n}(a_1, \dots, a_n) - \frac{2m}r\, , 
{ \hspace{+154 pt} }
\end{multline*} 
$
\W^{r,\tau}_{g,n}(e_{a_1}\otimes \cdots \otimes e_{a_n})\in H^*(\oM_{g,n})
$
is a cohomology class with top degree 
$$
\D^r_{g,n}(a_1, \dots, a_n) = \frac{(g-1)(r-2)+\sum a_i}{r}
$$
equal to $W^r_{g,n}(a_1,\ldots,a_n)\in H^{2\D_{g,n}^r({\bf{a}})}(\oM_{g,n})$
and lower degree terms.

Hence, for any $d>\D^r_{g,n}(a_1,\ldots,a_n)$, the degree~$d$ part of the 
stable graph expression 
of $$\Omega^{r,\tau}=R.\omega^{r,\tau}$$ vanishes. The
proof of Theorem \ref{Thm:TautRel1} is complete. \qed

\vspace{10pt}
The proof of Theorem \ref{Thm:TautRel1} also yields an explicit
calculation of Witten's $r$-spin class.

\begin{theorem} \label{Thm:rWitten}
$W^r_{g,n}(a_1, \dots, a_n)$
equals the degree $\D^r_{g,n}(a_1,\ldots,a_n)$ part of the stable graph
expression of
$$\Omega^{r,\tau}= R.\omega^{r,\tau}\, $$
in $H^*(\oM_{g,n})$.
\end{theorem}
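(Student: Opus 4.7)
The plan is essentially to read off Theorem \ref{Thm:rWitten} from the same degree analysis that gave Theorem \ref{Thm:TautRel1}, but now looking at the top degree piece rather than the vanishing above it.

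First I would invoke Teleman's classification \cite{Teleman}: since the Frobenius algebra at $\tau=(0,\ldots,0,r\phi)$ is semisimple (via Theorem \ref{Thm:Verlinde} and Proposition \ref{Prop:idempotents}), the shifted CohFT $\W^{r,\tau}$ is completely determined by its topological part $\omega^{r,\tau}$ together with an $R$-matrix satisfying the symplectic and homogeneity conditions recorded in Section \ref{Ssec:Rmatrix1}. The unique such $R$-matrix is the hypergeometric one computed there, so
\[
\W^{r,\tau} \;=\; R.\omega^{r,\tau} \;=\; \Omega^{r,\tau}
\]
as CohFTs with values in $H^*(\oM_{g,n})$.

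Next I would expand the left-hand side by the definition of shift from Section \ref{frrz}:
\[
\W^{r,\tau}_{g,n}(e_{a_1}\otimes\cdots\otimes e_{a_n})
= \sum_{m\ge 0}\frac{1}{m!}\,p_{m*}\W^{r}_{g,n+m}\bigl(e_{a_1}\otimes\cdots\otimes e_{a_n}\otimes \tau^{\otimes m}\bigr).
\]
Because $\tau$ is a multiple of $e_{r-2}$, each summand on the right is a push-forward of Witten's $r$-spin class with $m$ extra insertions all equal to $e_{r-2}$. Using the dimension formula \eqref{gred} and the fact that $p_m$ drops complex degree by $m$, the degree of the $m$-th term is
\[
\D^r_{g,n+m}(a_1,\ldots,a_n,\underbrace{r-2,\ldots,r-2}_{m}) - m \;=\; \D^r_{g,n}(a_1,\ldots,a_n) - \frac{2m}{r}.
\]
Thus the $m=0$ summand contributes $W^r_{g,n}(a_1,\ldots,a_n)$ in the top degree $\D^r_{g,n}(a_1,\ldots,a_n)$, and every summand with $m\geq 1$ lives in strictly smaller degree. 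Extracting the degree $\D^r_{g,n}(a_1,\ldots,a_n)$ part therefore gives exactly $W^r_{g,n}(a_1,\ldots,a_n)$.

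I do not expect a serious obstacle; the content sits in the preceding sections. The one point to be careful about is the application of Teleman: Teleman's theorem requires generic semisimplicity together with the homogeneity data (Euler field and grading operator) to pin down $R(z)$, and one must check that the $R$-matrix of Section \ref{Ssec:Rmatrix1} indeed solves the prescribed ODE $[R_{m+1},\xi]=(m+\mu)R_m$ with $R_0=1$ and satisfies the symplectic condition $R^{-1}(z)=R^*(-z)$ --- both of which are verified in Section \ref{Ssec:Rmatrix1}. Given that, the identification $\W^{r,\tau}=\Omega^{r,\tau}$ is immediate and the theorem follows from the degree bookkeeping above.
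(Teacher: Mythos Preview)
Your proposal is correct and follows essentially the same approach as the paper: invoke Teleman's classification to identify $\W^{r,\tau}$ with $\Omega^{r,\tau}=R.\omega^{r,\tau}$, then use the degree computation $\D^r_{g,n+m}(\mathbf{a},r-2,\ldots,r-2)-m=\D^r_{g,n}(\mathbf{a})-2m/r$ to see that only the $m=0$ summand contributes in top degree. The paper presents this degree analysis while proving Theorem~\ref{Thm:TautRel1} and simply notes that the same argument yields Theorem~\ref{Thm:rWitten}.
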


As a consequence of Theorem \ref{Thm:rWitten}, we see
\begin{equation}\label{fftt22}
 W^r_{g,n}(a_1, \dots, a_n)\in RH^*(\oM_{g,n})\, . 
\end{equation}
Since Theorem \ref{Thm:rWitten} and the implication \eqref{fftt22}
do  {\em not} concern relations, Janda's results do {\em not}
apply: we do {\em not} have a proof of the lifts to Chow of these
two statements.

\subsection{An example: $g=1$, $n=1$}
Let $r\geq 2$.
Let $g=1$, $n=1$, $a_1 = a\in \{0,\ldots, r-2\}$, $d=1$. We have
$$
\D^r_{1,1}(a) = \frac{a}r < 1\, .
$$
Thus, the degree~1 part of the stable graph expression of 
$$\Omega^{r,\tau}_{1,1}(e_a)=\phi^{(2a-r+2)/4} \, \Omega^{r,\tau}_{1,1}(\hd_a)$$
is a tautological relation for every $a$.
To write the relations, we will require the coefficient 
$$
\bb_{r,a} = \frac{(2a+2+r)(2a+2-r)}{16r^2}
$$
of $T$ in $\B_{r,a}$.  

There are exactly three stable graphs{\footnote{We refer the
reader to \cite[Section 2]{PaPiZv} for a review of Givental's action of
$R$ on $\omega^{r,\tau}$.}}
which contribute to the degree $1$ part of $\Omega_{1,1}^{r,\tau}(\hd_a)$: the graph $\Gamma_1$ with one genus~1 vertex and one marked leg, the graph $\Gamma_2$ with one genus~1 vertex, one marked leg, and one $\kappa$-leg, and the graph $\Gamma_3$ with one genus~0 vertex, a leg, and a loop.
\begin{center}
\ 
\begin{picture}(0,0)%
\includegraphics{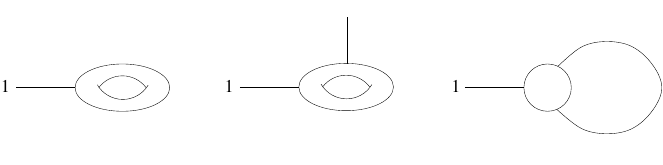}%
\end{picture}%
%
%  Created by WinFIG version 4.1 
%  METADATA <version>1.0</version> 
%
\setlength{\unitlength}{1579sp}%
\begingroup\makeatletter\ifx\SetFigFont\undefined%
\gdef\SetFigFont#1#2#3#4#5{%
  \reset@font\fontsize{#1}{#2pt}%
  \fontfamily{#3}\fontseries{#4}\fontshape{#5}%
  \selectfont}%
\fi\endgroup%
\begin{picture}(13248,3281)(151,-2591)
%  METADATA <id>15</id> 
\put(6787,386){\makebox(0,0)[lb]{\smash{{\SetFigFont{10}{12.0}{\rmdefault}{\mddefault}{\updefault}{\color[rgb]{0,0,0}$\kappa$}%
}}}}
%  METADATA <id>21</id> 
\put(1655,-2468){\makebox(0,0)[lb]{\smash{{\SetFigFont{10}{12.0}{\rmdefault}{\mddefault}{\updefault}{\color[rgb]{0,0,0}$\Gamma_1$}%
}}}}
%  METADATA <id>23</id> 
\put(6379,-2468){\makebox(0,0)[lb]{\smash{{\SetFigFont{10}{12.0}{\rmdefault}{\mddefault}{\updefault}{\color[rgb]{0,0,0}$\Gamma_2$}%
}}}}
%  METADATA <id>24</id> 
\put(10867,-2468){\makebox(0,0)[lb]{\smash{{\SetFigFont{10}{12.0}{\rmdefault}{\mddefault}{\updefault}{\color[rgb]{0,0,0}$\Gamma_3$}%
}}}}
\end{picture}%

\end{center}

\begin{enumerate}
\item[$\bullet$]
$\Gamma_1$ contributes
$
\phi^{-r/2} \; \bb_{r,a} \, \omega^{r,\tau}_{1,1}(\hd_{r-2-a}) \;  \psi_1\, ,
$
\item[$\bullet$]
$\Gamma_2$ contributes
$
- \phi^{-r/2}\; \bb_{r,0}  \, \omega^{r,\tau}_{1,2}(\hd_a \otimes \hd_{r-2}) \; \kappa_1\, ,
$
\item[$\bullet$]
$\Gamma_3$ contributes
$
- \phi^{-r/2}\; \sum_{a'=0}^{r-2}
\bb_{r,r-2-a'} \, \omega^{r,\tau}_{0,3}(\hd_a \otimes \hd_{a'} \otimes \hd_{a'}) \; \delta\, .
$
\end{enumerate}
Here,  $\delta$ is the class Poincar\'e dual to the boundary. 
The factor $\phi^{-r/2}$ comes from the series $\B_{r,a}(\phi^{-r/2} z)$ in the $R$-matrix.

Using Proposition~\ref{Prop:QuantumProd1}, we obtain the evaluations:
\begin{eqnarray*}
\omega^{r,\tau}_{1,1}(\hd_{r-2-a}) &=& (a+1) \phi^{(r-2)/4} \delta^{\rm even}_{r-a}\, ,\\
\omega^{r,\tau}_{1,2}(\hd_a \otimes \hd_{r-2}) &=& (a+1) \phi^{(r-2)/4} \delta^{\rm even}_{r-a}\, , 
\end{eqnarray*}
$$
\omega^{r,\tau}_{0,3}(\hd_a \otimes \hd_{a'} \otimes \hd_{a'}) = 
\left|
\begin{array}{cl}
\phi^{(r-2)/4} & \mbox{ if }  r-a \mbox{ is even and } \frac{r-2-a}2 \leq a' \leq \frac{r-2+a}2,\\
0 & \mbox{otherwise}.
\end{array}
\right.
$$
Hence, we have
\begin{eqnarray*}
\phi^{-\frac{r}2}\, \bb_{r,a} \, \omega^{r,\tau}_{1,1}(\hd_{r-2-a}) &\!=\!&
\phi^{-\frac{r+2}4} \, \frac{(2a+2+r)(2a+2-r)(a+1)}{16r^2}  \delta^{\rm even}_{r-a}\, , \\
- \phi^{-\frac{r}2}\, \bb_{r,0}  \, \omega^{r,\tau}_{1,2}(\hd_a \otimes \hd_{r-2})
&\! =\! &
\phi^{-\frac{r+2}4}\; \frac{(r+2)(r-2)(a+1)}{16r^2}  \delta^{\rm even}_{r-a}\, ,
\end{eqnarray*}
\begin{multline*}
-\phi^{-\frac{r}2}\; \sum_{a'=0}^{r-2}
\bb_{r,r-2-a'} \omega^{r,\tau}_{0,3}(\hd_a \otimes \hd_{a'} \otimes \hd_{a'})\\ 
=
-\phi^{-\frac{r+2}4}\; \delta^{\rm even}_{r-a}
\sum_{a' = (r-2-a)/2}^{(r-2+a)/2} \frac{(2a+2-r)(2a+2-3r)}{16r^2} \\
= -\phi^{-\frac{r+2}4}\;\frac{a(a+1)(a+2)}{48 r^2} \delta^{\rm even}_{r-a}\, .
\end{multline*}
After dividing by the common factor 
$\displaystyle -\phi^{-(r+2)/4}\, \frac{(a+1)}{16 r^2}$, we obtain the following statement.

\begin{proposition}\label{xxww}
For $a\in \{0,\ldots, r-2\}$ of the same parity as $r$, we have
$$
(r-2a-2)(r+2a+2) \cdot \psi_1 - (r-2)(r+2) \cdot \kappa_1 + \frac{a(a+2)}3 \cdot  \delta = 0 \in H^2(\oM_{1,1}).
$$
\end{proposition}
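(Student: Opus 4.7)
The plan is to derive the relation as a direct consequence of Theorem \ref{Thm:TautRel1} applied with $g=1$, $n=1$, $a_1=a$, and $d=1$, using the three explicit graph contributions already identified in the text above.

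First I would verify that Theorem \ref{Thm:TautRel1} applies: since $\D^r_{1,1}(a) = a/r < 1$, the degree $1$ part of the stable graph expression $\Omega^{r,\tau}_{1,1}(e_a) = \phi^{(2a-r+2)/4}\,\Omega^{r,\tau}_{1,1}(\hd_a)$ vanishes in $H^2(\oM_{1,1})$. Next I would enumerate the stable graphs contributing in degree $1$: only $\Gamma_1$, $\Gamma_2$, $\Gamma_3$ can appear, since an $R$-matrix insertion on a leg contributes a $\psi$, one on the $\kappa$-type dilaton leg contributes a $\kappa_1$, and an $R$-matrix insertion across a loop contributes to the boundary class $\delta$. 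The coefficient of $T$ in $\B_{r,a}$ is $\bb_{r,a} = (2a+2+r)(2a+2-r)/(16r^2)$, and the $R$-matrix scale factor $\phi^{-r/2}$ (from the argument $\phi^{-r/2}z$) is the same for each graph.

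Then I would evaluate the three contributions using Proposition~\ref{Prop:QuantumProd1}. For the topological values, observe that each of $\omega^{r,\tau}_{1,1}(\hd_{r-2-a})$, $\omega^{r,\tau}_{1,2}(\hd_a\otimes\hd_{r-2})$, and $\omega^{r,\tau}_{0,3}(\hd_a\otimes\hd_{a'}\otimes\hd_{a'})$ vanishes unless $r-a$ is even, which gives the parity condition in the statement. A short trigonometric sum in Proposition~\ref{Prop:QuantumProd1} yields the factor $(a+1)\phi^{(r-2)/4}$ in the first two cases. For $\Gamma_3$, the $\omega^{r,\tau}_{0,3}$-value is $\phi^{(r-2)/4}$ exactly when $(a,a',a')$ lies in the tetrahedron of Proposition~\ref{Prop:QP}, which amounts to $(r-2-a)/2 \leq a' \leq (r-2+a)/2$ with the correct parity.

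The main bookkeeping step --- and the only real obstacle --- is to evaluate the $\Gamma_3$ sum
\[
\sum_{a'=(r-2-a)/2}^{(r-2+a)/2} \bb_{r,r-2-a'} = \sum_{a'} \frac{(2a'+2-r)(2a'+2-3r)}{16r^2}.
\]
A reindex $j = a'-(r-2)/2$ makes this a sum of a quadratic in $j$ over a symmetric range, which collapses to $\frac{a(a+1)(a+2)}{48r^2}$. Assembling the three contributions gives
\[
\phi^{-(r+2)/4}\,\delta^{\rm even}_{r-a}\,\frac{a+1}{16r^2}\!\left[(2a+2+r)(2a+2-r)\psi_1+(r+2)(r-2)\kappa_1-\frac{a(a+2)}{3}\delta\right]=0.
\]
Finally, dividing by the common nonzero factor $-\phi^{-(r+2)/4}(a+1)/(16r^2)$ and rewriting $(2a+2+r)(2a+2-r) = -(r-2a-2)(r+2a+2)$ produces the stated identity in $H^2(\oM_{1,1})$.
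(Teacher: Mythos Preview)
Your proposal is correct and follows essentially the same route as the paper's own argument: apply Theorem~\ref{Thm:TautRel1} at $g=n=d=1$, list the three stable graphs $\Gamma_1,\Gamma_2,\Gamma_3$, evaluate their contributions via Proposition~\ref{Prop:QuantumProd1} and Proposition~\ref{Prop:QP}, sum the $\Gamma_3$ term to $\frac{a(a+1)(a+2)}{48r^2}$, and divide out the common nonzero factor $-\phi^{-(r+2)/4}(a+1)/(16r^2)$. The only cosmetic difference is your reindexing of the $\Gamma_3$ sum, which the paper carries out directly.
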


After regrouping the terms, we 
write the relation of Proposition \ref{xxww} as
$$(r^2- 4)(\psi_1 - \kappa_1) + (2a+a^2) \left(\frac{\delta}{3} - 4 
\psi_1\right)  = 0\, ,$$ 
which can only be satisfied for all the possible choices of $a$ and $r$  if
$$
\frac{\delta}{12} = \kappa_1 = \psi_1.
$$

\subsection{Restriction to $\cM_{g,n}$}\label{Subsec:Mg}
The tautological relations of Theorem \ref{Thm:TautRel1} become much simpler when restricted to the interior $\cM_{g,n} \subset \oM_{g,n}$ as the graph sum is 
then reduced to a single term. In order to prove Theorem \ref{Cor:Mg},
 we will alter the relations slightly before restricting to the interior.

Let $\sigma, a_1, \ldots, a_n$ and $d$ be as in the statement of 
Theorem \ref{Cor:Mg}, so
\[
rd > (r-2)(g-1) + |\sigma| + \sum_{i=1}^n a_i\, .
\]
Since $a_i$ and $\sigma_j$ avoid $r-1$ mod $r$, we can write
$$a_i = b_i + rc_i\, , \ \  \ \ \ \ \sigma_j = b_{n+j} + rc_{n+j}$$
with $0 \le b_i \le r-2$ for all $1\leq i \leq n+\ell(\sigma)$.
For $C = \sum_{i=1}^{n+\ell(\sigma)} c_i$, we have 
\[
r(d-C) > (r-2)(g-1) + \sum_{i=1}^{n+\ell(\sigma)}b_i\, .
\]
By Theorem~\ref{Thm:TautRel1} the degree $d-C$ part of
\[
\Omega^{r,\tau}_{g,n+\ell(\sigma)}(\hd_{b_1} \otimes \cdots \otimes \hd_{b_{n+\ell(\sigma)}})
\]
yields a tautological relation on $\oM_{g,n+\ell(\sigma)}$ which
we write as
\begin{equation}\label{gtt12}
 X = 0\in H^{2(d-C)}(\oM_{g,n+\ell(\sigma)})\, .
\end{equation}
Push-forward yields a
tautological relation of degree $d$ on $\oM_{g,n}$,
\begin{equation}\label{nnwww}
p_{\ell(\sigma)*} \left(
%\prod_{i=1}^{n+\ell(\sigma)}\psi_{i}^{c_i}
\prod_{j=1}^{\ell(\sigma)}\psi_{n+j}^{c_{n+j}+1}\cdot X\right) \, =\, 0\ \in \ H^{2d}(\oM_{g,n})\, .
\end{equation}
We restrict \eqref{nnwww} to the interior to obtain
 a tautological relation of degree $d$ on $\cM_{g,n}$.

The only stable graph for $\oM_{g,n+\ell(\sigma)}$ which contributes to the 
relation \eqref{gtt12} is the principal graph $\Gamma_\bullet$ with a single vertex
and no edges. 
All other strata classes are either annihilated by multiplying by 
$\prod_j\psi_{n+j}$ or  remain supported on the boundary after push-forward 
by the forgetful map to $\oM_{g,n}$.

All the factors in the contribution of $\Gamma_\bullet$
 match up exactly with the formula given in Theorem
\ref{Cor:Mg} except for the values of the topological field theory 
$\omega^{r,\tau}$ which are absent. 
A leg (or $\kappa$-leg) assigned vector $\hd_a$ produces the series $\B_{r,a}$ by combining the two nonzero entries in column $a$ of the matrix $R^{-1}$. 
The topological field theory value which appears is
\[
\omega^{r,\tau}_{g,n+\ell(\sigma)+m}(\hd_{\, \widehat{b}_1},\ldots,\hd_{\, \widehat{b}_{n+\ell(\sigma)}},
\hd_{\, \widehat{0}},\ldots,\hd_{\, \widehat{0}}),
\]
where $\widehat{x}$ represents either $x$ or $r-2-x$ depending on whether the diagonal or antidiagonal entry in $R^{-1}$ was 
chosen.
%\footnote{By $\omega_{g,n}(a_1,\ldots,a_n)$ we will always mean $\omega_{g,n}(\%hd_{a_1},\ldots,\hd_{a_n})$.}. 
The number of times an antidiagonal{\footnote{The parity
condition here is because the antidiagonal entries are constructed from
{\em odd} functions.}} entry is chosen is congruent mod $2$ to the degree $d-C$. After specialization to $\phi=1$, the easily checked identity
\[
\omega^{r,\tau}_{g,s+1}(\hd_{x_1},\ldots,\hd_{x_s},\hd_{r-2}) = 
\omega^{r,\tau}_{g,s}(\hd_{x_1},\ldots,\hd_{x_{s-1}},\hd_{r-2-x_s}),
\]
then implies that the topological field theory value does not depend on any of the choices made (and can be divided out by the nonvanishing result below).

The parity condition in the statement of Theorem \ref{Cor:Mg},
\begin{equation*}
rd \equiv (r-2)(g-1) + |\sigma| + \sum_{i=1}^n a_i \, \mod{2}\, ,
\end{equation*}
together with parity constraint on the number of times an
antidiagonal is chosen, implies
$$\sum_{i=1}^{n+\ell(\sigma)} \widehat{b}_j \equiv r(g-1) \mod 2\, .$$

\begin{lemma}\label{Lem:nonzero}
Let $g \ge 1$, $s\ge 0$ with $2g-2+s > 0$. Let $x_1,\ldots,x_s \in 
\{0,\ldots,r-2\}$ satisfy
\[
\sum_{i=1}^sx_i \equiv r(g-1) \mod{2}.
\]
Then, $\omega^{r,\tau}_{g,s}(x_1,\ldots,x_s)\neq 0$.
\end{lemma}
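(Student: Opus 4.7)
The plan is to use Proposition \ref{Prop:QuantumProd1} to reduce the lemma to nonvanishing of the explicit trigonometric sum
$$
S = \sum_{k=1}^{r-1}(-1)^{(k-1)(g-1)}\frac{\prod_{i=1}^s\sin((x_i+1)k\pi/r)}{\sin(k\pi/r)^{2g-2+s}}
$$
under the parity hypothesis $\sum x_i \equiv r(g-1) \pmod 2$, and then to identify $S$, up to a positive scalar, with a Verlinde dimension for the $\sl_2$ WZW theory at level $r-2$. For such Verlinde dimensions, positivity under the even-weight constraint is established by a direct trace computation.

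I would split into two cases according to the parity of $g$. When $g$ is odd, the sign $(-1)^{(k-1)(g-1)}$ is trivially $1$, and $S$ is exactly $(r/2)^{1-g}$ times the Verlinde formula $\dim V^{(r-2)}_{g,s}(x_1,\ldots,x_s)$. The hypothesis reads $\sum x_i \equiv 0 \pmod 2$, which is the standard Verlinde parity constraint. When $g$ is even, the key move is the elementary identity
$$
(-1)^{k-1} \;=\; \frac{\sin((r-1)k\pi/r)}{\sin(k\pi/r)}
$$
which follows from $\sin(k\pi - \theta) = -(-1)^k\sin\theta$. Substituting this into $S$ absorbs the alternating sign as an extra sine factor and produces
$$
S \;=\; (r/2)^{1-g}\,\dim V^{(r-2)}_{g,s+1}(x_1,\ldots,x_s,\,r-2).
$$
The hypothesis $\sum x_i \equiv r \pmod 2$ becomes $\sum x_i + (r-2) \equiv 0 \pmod 2$, once again the Verlinde parity condition on the full list of insertions.

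It then remains to show the general positivity statement: for $g\ge 1$, $2g-2+t > 0$, and $y_j \in \{0,\ldots,r-2\}$ with $\sum y_j$ even, $\dim V^{(r-2)}_{g,t}(\vec y) > 0$. I would work directly on the Verlinde algebra $V = \mathbb{Z}\langle \rho_0,\ldots,\rho_{r-2}\rangle$, whose fusion coefficients are non-negative integers. The TQFT/trace formula reads
$$
\dim V^{(r-2)}_{g,t}(\vec y) \;=\; \operatorname{Tr}_V\bigl(L_{\rho_{y_1}}\cdots L_{\rho_{y_t}} \cdot L_h^{g-1}\bigr),
$$
where $h = \sum_{a=0}^{r-2}\rho_a^2$ is the handle element, which also has non-negative structure constants. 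Hence $M = \rho_{y_1}\cdots\rho_{y_t}\cdot h^{g-1}$ is a $\mathbb{Z}_{\ge 0}$-linear combination of basis elements, and the parity assumption forces it to lie in the $\mathbb{Z}/2$-even sector. An elementary count yields $\operatorname{Tr}_V(L_{\rho_c}) = r-1-c$ for even $c\in[0,r-2]$ and $0$ otherwise, so it suffices to see that $M$ has a strictly positive coefficient on some even $\rho_c$. To see $M\neq 0$, I would pass to the idempotent basis of Proposition \ref{Prop:idempotents}: the eigenvalue of $M$ at the idempotent indexed by $k=1$ is a positive multiple of $\prod_i \sin((y_i+1)\pi/r)$, and each factor is strictly positive since $0\le y_i\le r-2$.

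The main obstacle is the sign manipulation in the $g$-even case and its identification with a Verlinde dimension; this is exactly what the identity $(-1)^{k-1} = \sin((r-1)k\pi/r)/\sin(k\pi/r)$ is designed to handle. Once $S$ is recognized as a Verlinde dimension, the positivity step reduces to the non-negative integrality of the $\sl_2$ fusion rules together with the elementary evaluation $\operatorname{Tr}_V(L_{\rho_c}) = r-1-c$, both of which are standard.
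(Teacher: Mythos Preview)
Your argument is correct and takes a genuinely different route from the paper's proof.

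The paper argues by induction on $g$ and $n$ directly from the splitting axiom and Proposition~\ref{Prop:QP}: the base case $\omega^{r,\tau}_{1,1}(\hd_{x_1})$ is a sum of non-negative $\omega^{r,\tau}_{0,3}$ values, one of which (at $y=x_1/2$) is checked to land inside the tetrahedron; the inductive steps split off one marking (for $g=1$, $n>1$) or one handle (for $g>1$) and again exhibit a positive summand. The proof never touches the trigonometric formula of Proposition~\ref{Prop:QuantumProd1} and works entirely with the non-negativity of the structure constants in Proposition~\ref{Prop:QP}.

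Your approach instead evaluates the closed trigonometric sum globally. The key move---absorbing the alternating sign $(-1)^{(k-1)(g-1)}$ for even $g$ via $(-1)^{k-1}=\sin((r-1)k\pi/r)/\sin(k\pi/r)$ as an extra insertion $r-2$---is exactly the identity $\omega^{r,\tau}_{g,s+1}(\ldots,\hd_{r-2})=\omega^{r,\tau}_{g,s}(\ldots,\hd_{r-2-x_s})$ that the paper itself invokes just before stating the lemma, now read at the level of the Verlinde TQFT rather than the $\tau$-shifted one. Your positivity argument (non-negative fusion plus $\operatorname{Tr}_V(L_{\rho_c})=r-1-c>0$ for even $c\le r-2$, together with nonvanishing of $M$ at the $k=1$ idempotent) is clean and makes Theorem~\ref{Thm:Verlinde} do real work. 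The paper's induction is shorter and stays entirely within the structures already set up in Section~\ref{Sec:lastshift}; your approach is more conceptual and identifies $\omega^{r,\tau}_{g,s}$ (up to a $\phi$ power) with an honest Verlinde dimension, which is arguably what one wants to know anyway.
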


\begin{proof}
We will use the formula for $\omega^{r,\tau}_{0,3}$ given by Proposition~\ref{Prop:QP} to induct on $g$ and $n$. In fact, the argument will prove 
 $$\omega^{r,\tau}_{g,s}(\hd_{x_1},\ldots,\hd_{x_s})> 0\, $$
for $\phi > 0$. 

Our base case is $\omega^{r,\tau}_{1,1}(x_1)$ with $x_1$ even. 
After applying the splitting axiom of the CohFT, we obtain
\[
\omega^{r,\tau}_{1,1}(\hd_{x_1}) = \sum_{x=0}^{r-2}
\omega^{r,\tau}_{0,3}(\hd_{x_1},\hd_{y},\hd_{r-2-y})
\]
and all of the terms are non-negative by Proposition~\ref{Prop:QP}.
For $y=\frac{x_1}{2}$, the point $(x_1,y,y)$ lies in the tetrahedron described 
in Proposition \ref{Prop:QP}, 
$$\omega^{r,\tau}_{0,3}\left(\hd_{x_1},\hd_{\frac{x_1}{2}},
\hd_{r-2-\frac{x_1}{2}}\right)>0\, .$$

Next, we prove the case $g=1$, $n>1$ by induction on $n$. We have 
\[
\omega^{r,\tau}_{1,n}(\hd_{a_1},\ldots,\hd_{a_{n-1}},\hd_{a_n}) = \sum_{x=0}^{r-2}\omega^{r,\tau}_{1,n-1}(\hd_{a_1},\ldots,\hd_{a_{n-2}},\hd_{x})\cdot 
\omega_{0,3}(\hd_{a_{n-1}},\hd_{a_n},\hd_{r-2-x}),
\]
and we may assume $a_{n-1}\equiv a_n \mod 2$.
As before, non-negativity means we  need only  find a single value of $x$ such that $\omega_{0,3}(a_{n-1},a_n,r-2-x)$ is nonzero. 
The tetrahedron constraints are satisfied for $x = |a_n-a_{n-1}|$.

Finally, we treat the case $g>1$ by induction on $g$. We have 
\[
\omega^{r,\tau}_{g,n}(\hd_{a_1},\ldots,\hd_{a_n}) = 
\sum_{x=0}^{r-2}\omega^{r,\tau}_{g-1,n+2}(\hd_{a_1},\ldots,\hd_{a_n},\hd_{x},
\hd_{r-2-x})
\]
and all of the terms on the right are positive by the inductive hypothesis (since the parity condition is preserved).
\end{proof}

\subsection{Proof of Theorem~\ref{Thm:betti}}\label{Subsec:betti}
We will now use the relations of Theorem \ref{Cor:Mg}
with $r=4$ to bound the Betti numbers of the tautological ring of 
$\cM_g$.{\footnote{The analysis here was completed by A. Pixton
before  our study of $r$-spin relations started and appears 
in \cite{PiThesis}. Since there is no published reference (and for the
convenience of the reader) we have included the short argument here.
Several aspects are parallel to the 
linear algebra required in \cite{kap}.}} 

In the case $r=4$ and $n=0$, the relations of Corollary~\ref{Cor:Mg} are 
parameterized by partitions $\sigma$ with no parts congruent to $3$ mod $4$ and positive integers $d$ satisfying
\[
4d > 2(g-1) + |\sigma| \ \ \ {\text{and}}\ \ \ 
|\sigma| \equiv 0 \pmod{2}.
\]

We discard the relations coming from partitions $\sigma$ containing an odd part and then halve all parts of $\sigma$. The remaining relations are then 
simply indexed by partitions $\sigma$ and positive integers $d$ satisfying
\[
2d \ge g + |\sigma|.
\]

For
$
D_s(T) = \B_{4,2s}(T)
$,
the relations of Theorem \ref{Cor:Mg} are obtained
by taking the degree $d$ part of 
\begin{equation}\label{Eq:FZ2}
\sum_{m\ge 0}\frac{1}{m!}\, p_{m+\ell(\sigma)*}\, \prod_{j=1}^{\ell(\sigma)}(TD_{\sigma_j})(\psi_j)\prod_{k=1}^m(T-TD_0)(\psi_{\ell(\sigma)+k})\, .
\end{equation}
We will alter the definition of $D_1$ to kill the  constant term:
\[
D_1(T) = \B_{4,2}(T) - \B_{4,0}(T).
\]
A straightforward check shows the span of the relations \eqref{Eq:FZ2}
is unchanged by the new definition of $D_1$.

\vspace{10pt}
\noindent{\em Proof of Theorem~\ref{Thm:betti}.} 
The push-forward kappa polynomials
\begin{equation}\label{zzqq}
p_{{\ell(\tau)}*}\, \prod_{i=1}^{\ell(\tau)}\psi_i^{\tau_i+1}\, , \ \ \ \ \ \ \
p_{\ell(\tau)}: \oM_{g,\ell(\tau)} \rightarrow \oM_{g}\, ,
\end{equation}
where $\tau$ is a partition of $d$, form a basis for the vector space of (formal) kappa polynomials of degree $d$. We will use the push-forward basis 
\eqref{zzqq} to obtain a lower bound for the rank of the relations given in \eqref{Eq:FZ2}.

Given any two partitions $\sigma$ and $\tau$, let 
$\mathsf{K}(\sigma,\tau)$ be the coefficient of the push-forward kappa polynomial corresponding to $\tau$ in \eqref{Eq:FZ2}. Define a matrix 
$\mathsf{M}$ with rows and columns indexed by partitions of $d$ by 
 $$\mathsf{M}_{\sigma\tau} = \mathsf{K}(\sigma_{-},\tau)\, ,$$ 
where $\sigma_{-}$ is the partition formed by reducing each part of $\sigma$ by 1 and discarding the parts of size 0.
By Proposition \ref{qq99qq} below,  $\mathsf{M}$ is invertible.

%Before proving the claim, we note that the claim implies Theorem~\ref{Thm:betti}. 
The invertibility of $\mathsf{M}$ implies Theorem \ref{Thm:betti}
by the following argument.
If $|\sigma| = d$ and $\ell(\sigma) \ge g-d$, then 
$$|\sigma_{-}| \le d - (g-d) = 2d-g\, , $$ so
 the row corresponding to $\sigma$ actually contains 
the coefficients of a relation obtained from Theorem \ref{Cor:Mg}.
Since $\mathsf{M}$ is invertible, 
all such  relations are linearly independent, 
so the quotient of the space of degree $d$ kappa polynomials 
by such relations has dimension at most the number of partitions of $d$ of length at most $g-1-d$, as desired. \qed
\vspace{10pt}

\begin{proposition} The matrix $\mathsf{M}$ is invertible. \label{qq99qq}
\end{proposition}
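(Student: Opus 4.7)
The plan is to translate the problem into the formal kappa polynomial ring $\mathbb{Q}[\kappa_1,\kappa_2,\ldots]$ and combine a $\kappa$-length filtration with the rigidity specific to $r=4$. First, the standard Arbarello--Cornalba push-forward identity
\[
\sum_{m\ge 0}\frac{1}{m!}\,p_{m*}\prod_{k=1}^{m}G(\psi_k) \,=\, \exp\!\Bigl(\sum_{i\ge 1}g_i\kappa_i\Bigr), \qquad G(T)=\sum_{i\ge 1}g_iT^{i+1},
\]
identifies $\mathsf{M}_{\sigma\tau}=\mathsf{K}(\sigma_-,\tau)$ with the coefficient of $\kappa_\tau$ in the degree-$d$ part of
\[
R_{\sigma_-} \,=\, P_\sigma\cdot \exp(\widehat{D}_1), \qquad P_\sigma := \prod_{j}\widehat{D}_{(\sigma_-)_j}, \qquad \widehat{D}_s := \sum_{k}d_{s,k}\kappa_k.
\]
The crucial identity $\widehat{D}_1 = -\sum_{i\ge 1}d_{0,i}\kappa_i$ follows from the redefinition $D_1 = 1-\B_{4,0}$ together with $d_{0,i}=[T^i]\B_{4,0}$; this is precisely the reason for the redefinition.

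Next, because $P_\sigma$ is a product of linear forms in the $\kappa_i$'s and $\widehat{D}_1$ is itself linear, they are $\kappa$-length-homogeneous of lengths $L(\sigma):=\ell(\sigma_-)$ and $1$. Expanding the exponential gives the length decomposition
\[
R_{\sigma_-} \,=\, \sum_{L\ge L(\sigma)}\frac{1}{(L-L(\sigma))!}\,P_\sigma\,\widehat{D}_1^{\,L-L(\sigma)},
\]
whose $L$-th summand is supported on $\kappa_\tau$ with $\ell(\tau)=L$. Consequently $\mathsf{M}_{\sigma\tau}=0$ whenever $\ell(\tau)<L(\sigma)$, so after ordering the column basis by $\ell(\tau)$ the matrix $\mathsf{M}$ acquires a block upper-triangular shape.

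Finally, the $r=4$ rigidity enters through the fact that $\widehat{D}_{2t+1}=\kappa_t$ is a pure monomial for $t\ge 1$, while $\widehat{D}_{2t}$ has leading $\kappa_t$-term with coefficient $1$ and $\widehat{D}_1$ has leading $(3/64)\kappa_1$-term. Using these leading terms, I would construct a bijection $\sigma\mapsto\tau(\sigma)$ between partitions of $d$---pairing $\sigma$ with the partition obtained by combining the $\kappa$-leading terms of the factors of $P_\sigma\,\widehat{D}_1^{m_1(\sigma)}$, where $m_1(\sigma)$ is the number of $1$'s in $\sigma$---so that $\mathsf{M}_{\sigma,\tau(\sigma)}$ becomes an explicit product of nonzero leading coefficients. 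Invertibility of $\mathsf{M}$ then follows from triangularity in this refined ordering.

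The principal obstacle is this final step: the length filtration alone produces block upper-triangularity but with non-square diagonal blocks, so a more refined ordering is required. The combinatorial verification that $\sigma\mapsto\tau(\sigma)$ is a bijection $P(d)\to P(d)$, that the diagonal entries do not vanish, and that the entries above the new diagonal either vanish by the length argument or can be eliminated by within-block row operations, is the technical heart of the argument and parallels the linear-algebra analysis in \cite{PiThesis}.
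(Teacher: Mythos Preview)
Your translation to the $\kappa$-polynomial ring is legitimate: the generating-function identity
$\sum_{m\ge 0}\tfrac{1}{m!}p_{m+\ell*}\prod_j F_j(\psi_j)\prod_k G(\psi_{\ell+k})
= \bigl(\prod_j\hat F_j\bigr)\exp(\hat G)$
does hold (extract the $s_1\cdots s_\ell$ coefficient from the exponential with $G$ replaced by $G+\sum s_jF_j$), and the length filtration does give block upper-triangular shape with non-square blocks. But the proof has a genuine gap at exactly the point you identify as the ``principal obstacle,'' and the rigidity you invoke to close it is not there.

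Specifically, your claim that $\widehat D_{2t+1}=\kappa_t$ is a pure monomial is false: for $t\ge 1$ one has $D_{2t+1}(T)=T^t\B_{4,2}(T)$, and $\B_{4,2}$ has all coefficients nonzero, so $\widehat D_{2t+1}=\kappa_t+c_1\kappa_{t+1}+c_2\kappa_{t+2}+\cdots$ with all $c_i\ne 0$. The same holds for $\widehat D_{2t}$. Hence the ``leading $\kappa$-term'' bijection $\sigma\mapsto\tau(\sigma)$ you propose does not produce a triangular matrix: off-diagonal terms of the same length survive in abundance, and there is no obvious finer grading that kills them.

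The paper's argument is quite different and does not attempt to make $\mathsf M$ itself triangular. Instead it constructs an explicit auxiliary matrix $\mathsf A$ (built from partition refinements with factorial weights) and shows that $\mathsf{MA}$ is upper-triangular with respect to the ordering by the number of parts equal to~$1$. The factorization \eqref{eq:KA} reduces this to computing $\sum_\tau\mathsf K(\sigma_-,\tau)\mathsf A_{\tau,(k)}$, which after a generating-function manipulation becomes the $t^{-1}$ coefficient of $\widehat D_0^{-2k-2}\prod_{i\in\sigma_-}\widehat D_i/\widehat D_0$ for a modified series $\widehat D_i$. The key analytic input---special to $r=4$---is the closed form $\widehat D_i=\sin\bigl(\tfrac{2i+1}{2}\sin^{-1}\sqrt t\bigr)$ (up to a triangular change) together with the residue vanishing $[\sin^{-e}\theta]_{t^{-1}}=0$ for even $e\ge 6$. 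This vanishing is what makes the diagonal-block analysis collapse; it is not visible from leading terms and is unlikely to be recovered by the combinatorial bijection you sketch.
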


\begin{proof} We will show the
invertibility of $\mathsf{M}$ by constructing another matrix $\mathsf{A}$ 
of the same size and checking that the product $\mathsf{MA}$ is 
upper-triangular (with nonvanishing diagonal entries)
with respect to any ordering of the partitions of $d$ which
 places partitions containing more parts of size 1 after partitions containing 
fewer parts of size 1.

First, we compute the coefficient $\mathsf{K}(\sigma,\tau)$ as a sum over injections from the set of parts of $\sigma$ to the set of parts of $\tau$ describing which factors in \eqref{Eq:FZ2} produce which psi powers.
 We write such an injection as 
$$\phi:\sigma\hookrightarrow\tau\, .$$
The parts of $\tau$ which are not in the image of $\phi$ are produced by the factors involving $D_0$. The result is
\begin{equation}\label{eq:KKK}
\mathsf{K}(\sigma,\tau) = \frac{(-1)^{\ell(\tau)-\ell(\sigma)}}{|\Aut(\tau)|}\sum_{\phi\,:\,\sigma\,\hookrightarrow\,\tau}\prod_{i\,\stackrel{\phi}{\mapsto}j}[D_i]_{T^j}\prod_{j\in(\tau\setminus\phi(\sigma))}[D_0]_{T^j}\, .
\end{equation}

We define the matrix $\mathsf{A}$ as follows.
For any partitions $\tau$ and $\mu$ of the same size, 
\[
\mathsf{A}_{\tau,\mu} = \sum_{\substack{\psi\,:\,\tau\to\mu \\ \text{refinement}}}\frac{|\Aut(\tau)|}{\prod_{k\in\mu}|\Aut(\psi^{-1}(k))|} \prod_{k\in\mu} \big(\ell(\psi^{-1}(k)) + 2k + 1\big)! \prod_{j\in\tau}\frac{1}{(2j+1)!!}\, ,
\]
where the sum runs over all partition refinements 
$$\psi:\tau\to\mu\, ,$$ 
functions from the set of parts of $\tau$ to the set of parts of $\mu$ 
such that the preimage of each part $k$ of $\mu$ is a partition of $k$.

We factor the sums appearing in the entries of the product matrix 
$\mathsf{MA}$:
\begin{equation}\label{eq:KA}
\sum_{\tau} \mathsf{K}(\sigma_-,\tau)
\mathsf{A}_{\tau,\mu} = \sum_{\xi:\sigma_-\to\mu}\prod_{\substack{k\in\mu \\ \sigma' = \xi^{-1}(k)}}\left(\sum_{\tau'}\mathsf{K}(\sigma',\tau')\mathsf{A}_{\tau',(k)}\right)\, ,
\end{equation}
where $\xi:\sigma_-\to\mu$ is a function from the set of parts of $\sigma_-$ 
to the set of parts of $\mu$.

In order to understand \eqref{eq:KA},  we must study the sum
\begin{equation}\label{jjj2}
\sum_{\tau}\mathsf{K}(\sigma_-,\tau)\mathsf{A}_{\tau,(k)}\, .
\end{equation}
After  expanding \eqref{jjj2} via formula \eqref{eq:KKK} 
for $\mathsf{K}(\sigma_-,\tau)$ and the definition of $\mathsf{A}_{\tau,(k)}$, 
the result is
\begin{multline}\label{papapa}
(-1)^{\ell(\sigma)}\sum_{\tau\, \vdash k}\, \sum_{\phi\,:\,\sigma_-\,\hookrightarrow\,\tau}\frac{(-1)^{\ell(\tau)}(\ell(\tau)+2k+1)!}{|\Aut(\tau)|}\\
\cdot \prod_{j\in\tau}\frac{1}{(2j+1)!!}\prod_{i\,\stackrel{\phi}{\mapsto} j}[D_i]_{T^j}\prod_{j\in(\tau\setminus\phi(\sigma_-))}[D_0]_{T^j}\, ,
\end{multline}
where the first sum is over all partitions $\tau$ of $k$.
Next, we include formal variable $t$ to keep track of the size of $\tau$ 
 and factor based on the values of the images of the parts of $\sigma_-$ under $\phi$ via the  series
\[
\widehat{D}_i(t) = \sum_{j\ge 1}[D_i]_{T^j}\frac{t^{j+\frac{1}{2}}}{(2j+1)!!}\, .
\]
After removing nonzero scaling factors, we rewrite \eqref{papapa} as 
\begin{equation}\label{eq:Dcoeff}
\left[\widehat{D}_0^{-2k-2}\prod_{i\in\sigma_-}\frac{\widehat{D}_i}{\widehat{D}_0}\right]_{t^{-1}}\, .
\end{equation}

Up to a triangular change of basis in the $\widehat{D}_i$, we have
\[
\widehat{D}_i = \sin\left(\frac{2i+1}{2}\sin^{-1}(\sqrt{t})\right)\, .
\]
We define
\[
\theta = \frac{1}{2}\sin^{-1}(\sqrt{t})
\]
and check the following two properties:
\[
\left[\frac{1}{\sin^4\theta}\right]_{t^{-1}} \ne 0\, ,
\]
but
\[
\left[\frac{1}{\sin^e\theta}\right]_{t^{-1}} = 0
\]
for every even $e\ge 6$.

Using these facts to compute \eqref{eq:Dcoeff}, we conclude 
\[
\sum_{\tau}\mathsf{K}(\sigma_-,\tau)\mathsf{A}_{\tau,(k)} = 0
\]
whenever $|\sigma_-| < k-1$, and 
\begin{equation}\label{fdd3}
\sum_{\tau}\mathsf{K}((k-1),\tau)\mathsf{A}_{\tau,(k)} \neq 0\, .
\end{equation}

We now return to the 
matrix $\mathsf{MA}$. Suppose that $\sigma$ and $\tau$ are partitions of $d$ containing $S$ and $T$ parts equal to 1 respectively and
 $$\mathsf{MA}_{\sigma\tau}\ne 0\,. $$ 
By the identity \eqref{eq:KA} and the analysis above, 
there exists a function 
$$\xi:\sigma_{-}\to\tau$$ for which
 the preimage of each part $k\in\tau$ is a partition of size at least $k-1$. 
Thus, we have
\begin{equation}\label{kkww22}
|\sigma| - \ell(\sigma) = |\sigma_{-}| \ge |\tau| - \ell(\tau)\, .
\end{equation}
Since $|\sigma| = |\tau| = d$, we see  $\ell(\tau) \ge \ell(\sigma)$.
Moreover, comparing lengths of the partitions, we obtain
\begin{equation}\label{kkww33}
\ell(\sigma) - S = \ell(\sigma_{-}) \ge \ell(\tau) - T\, .
\end{equation}
Adding the inequalities \eqref{kkww22} and
\eqref{kkww33}, we conclude $S\leq T$. 

If $S=T$, then the $\xi$-preimage of each part $k\in\tau$ must be of size $k-1$ and have length one if $k>1$, which implies $\tau = \sigma$. Thus,
 the matrix $\mathsf{MA}$ is triangular. The nonvanishing of the diagonal entries follows from the nonvanishing \eqref{fdd3}. \end{proof}
%\[
%\sum_{\tau}K((k-1),\tau)A_{\tau,(k)}
%\]
%described above.\end{proof}
%This completes the proof of Theorem~\ref{Thm:betti}.

\section{The semisimple point: 
$\widetilde{\tau} = (0, r \phi, 0,\ldots, 0)$}

\label{pfpfpf}
\subsection{Another shift}
The shift along the second basis vector $e_{1}\in V_r$ also yields
a semisimple CohFT with attractive properties.
The associated topological field theory $\omega^{r,\widetilde{\tau}}$
is very simple, much simpler than $\omega^{r,\tau}$,
but the $R$-matrix is not as explicit. A basic polynomiality
property of Witten's $r$-spin class will be proven using
$\widetilde{\tau}$. 

\subsection{The quantum product}\label{xxx123}
Recall the notation for genus $0$ correlators,
$$
\blangle a_1, \dots, a_n \brangle^r =\int_{\oM_{0,n}} W^r_{0,n}(a_1, \dots, a_n)
\,\in \mathbb{Q}\,  ,
$$
discussed in Section \ref{Sec:sl2}.
 As before, we will
often drop
the superscript $r$.

\begin{proposition}\label{lqq}
We have the evaluations
\begin{align*}
\blangle a,b,c\brangle & = 1 \quad \mbox{for } a+b+c = r-2\, ,\\
\blangle a,b,c,1 \brangle &= \frac1r \quad \mbox{for } a+b+c = 2r-3\, .
\end{align*}
All correlators involving an $a_i =1$ vanish whenever $n \geq 5$.
\end{proposition}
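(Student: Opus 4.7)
The plan is to prove the three assertions in order, relying on the initial conditions of Witten's class, Theorem~\ref{Thm:sl2}, and Proposition~\ref{Cor:vanishing}, all of which are already available at this point in the paper.

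For the first evaluation, the identity $\blangle a,b,c\brangle = 1$ when $a+b+c = r-2$ is simply the $3$-point initial condition \eqref{fred} integrated against $\oM_{0,3}$, so nothing further is required.

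For the second evaluation, I would apply Theorem~\ref{Thm:sl2} with $n=4$. Setting $A = r-2-a$, $B = r-2-b$, $C = r-2-c$, and using $a+b+c = 2r-3$, we obtain $A+B+C = r-3$. Theorem~\ref{Thm:sl2} then yields
\[
\blangle a,b,c,1\brangle = \frac{1}{r}\, \dim\Bigl[\rho_A\otimes\rho_B\otimes\rho_C\otimes\rho_{r-3}\Bigr]^{\sl_2}.
\]
The dimension in question equals the multiplicity of $\rho_{r-3}$ in $\rho_A\otimes\rho_B\otimes\rho_C$. Since the maximum weight appearing in $\rho_A\otimes\rho_B\otimes\rho_C$ equals $A+B+C = r-3$, and this highest weight occurs with multiplicity exactly one in any triple tensor product of irreducibles of $\sl_2$ (by a trivial application of the Clebsch–Gordan rule applied twice), the dimension is $1$, giving $\blangle a,b,c,1\brangle = 1/r$.

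Finally, the vanishing of all $n$-point correlators with some $a_i = 1$ for $n\geq 5$ is an immediate consequence of Proposition~\ref{Cor:vanishing}: for $n \geq 5$ the bound $n-3 \geq 2 > 1$ shows that the insertion $a_i = 1$ is strictly less than $n-3$, forcing the correlator to vanish. The main point to verify is only the numerical inequality, so there is no real obstacle; the structural work has all been done in proving Theorem~\ref{Thm:sl2} and its corollary.
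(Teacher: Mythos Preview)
Your proof is correct and follows essentially the same route as the paper: the $3$-point case is the initial condition \eqref{fred}, and the vanishing for $n\geq 5$ is exactly Proposition~\ref{Cor:vanishing}. For the $4$-point evaluation the paper simply asserts that the values are ``well-known'' and points to \eqref{fred}, whereas you actually verify the general $\blangle a,b,c,1\brangle = 1/r$ via Theorem~\ref{Thm:sl2} and the observation that $\rho_{r-3}$ is the top summand of $\rho_A\otimes\rho_B\otimes\rho_C$; this is a clean way to fill in what the paper leaves implicit, since \eqref{fred} only records the single value $\blangle 1,1,r-2,r-2\brangle$.
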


\begin{proof}  
The values of 3- and 4-point correlators are well-known (and
were stated in \eqref{fred} of Section \ref{wsc}).
The only nontrivial claim here is the vanishing for $n\geq 5$ which 
is a direct consequence of  Proposition \ref{Cor:vanishing}.
\end{proof}

By Proposition \ref{lqq},
the {\em quantum product} at $\widetilde{\tau}$ is given  by
$$
\d_a \bullet_{\widetilde{\tau}} \d_b = 
\left|
\begin{array}{lll}
\d_{a+b} & \mbox{if} & a+b \leq r-2\, ,\\
\phi \, \d_{a+b-r+1} & \mbox{if} & a+b \geq r-1\, .\\
\end{array}
\right.
$$
To simplify computations, we introduce a new frame{\footnote{The metric
is $\eta( \hhd_a,\hhd_b) = \phi^{-\frac{r-2}{r-1}}\delta_{a+b,r-2}$.}}   % DZ: Added "-" to the power of phi and replaced  r+2  with  r-2.
$$
\hhd_a = \phi^{-a/(r-1)} \d_a.  % DZ: Added "-" to the power of phi.
$$
The quantum multiplication then takes the form
$$
\hhd_a \bullet_{\widetilde{\tau}} \hhd_b = 
\left|
\begin{array}{lll}
\hhd_{a+b} & \mbox{if} & a+b \leq r-2\, ,\\
\hhd_{a+b-r+1} & \mbox{if} & a+b \geq r-1\, .\\
\end{array}
\right.
$$

\subsection{The topological field theory} \label{Ssec:TopFT2}

\begin{proposition} \label{Prop:TopFT2}
We have 
$$
\omega^{r,\widetilde{\tau}}_{g,n} (\hhd_{a_1} \otimes \cdots \otimes \hhd_{a_n})
= \phi^{(g-1)\frac{r-2}{r-1}}(r-1)^g \cdot \delta\, ,  % DZ: replaced (1-g) with (g-1) in the power of phi.
$$
where $\delta$ equals~$1$ if $g-1-\sum_{i=1}^n a_i$ is divisible by $r-1$ and~$0$ otherwise.
\end{proposition}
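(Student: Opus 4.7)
The plan is to recognize the Frobenius algebra at $\widetilde{\tau}$ as (a rescaling of) the group algebra of $\mathbb{Z}/(r-1)\mathbb{Z}$, diagonalize it explicitly by characters, and then apply the standard topological field theory formula in the idempotent basis. Indeed, the formula
$$\hhd_a \bullet_{\widetilde{\tau}} \hhd_b = \hhd_{(a+b) \bmod (r-1)}$$
identifies $\hhd_0,\dots,\hhd_{r-2}$ with the group elements of $\mathbb{Z}/(r-1)\mathbb{Z}$. Setting $\zeta = e^{2\pi i/(r-1)}$, the Fourier transform
$$e_k = \frac{1}{r-1}\sum_{a=0}^{r-2} \zeta^{-ak}\,\hhd_a\,, \qquad k\in\{0,\dots,r-2\}\,,$$
produces a basis of orthogonal idempotents, $e_k \bullet_{\widetilde\tau} e_l = \delta_{kl}\,e_k$, with inverse $\hhd_a = \sum_k \zeta^{ak} e_k$. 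This simultaneously proves semisimplicity of the shifted Frobenius algebra at $\widetilde{\tau}$.

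Next, I would compute the squares $\Delta_k := \eta(e_k,e_k)$. Using $\eta(\hhd_a,\hhd_b)=\phi^{-(r-2)/(r-1)}\delta_{a+b,r-2}$ and the character orthogonality $\sum_a \zeta^{a(k-l)}=(r-1)\delta_{kl}$, one obtains
$$\Delta_k = \frac{\phi^{-(r-2)/(r-1)}}{r-1}\,\zeta^{-(r-2)k}\,.$$
In particular the $\Delta_k$ are nonzero, reconfirming semisimplicity.

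Then I would appeal to the standard TFT evaluation in the idempotent basis, which follows from restricting $\mathsf W^{r,\widetilde\tau}_{g,n}$ to a maximally degenerate curve (exactly as in the proof of Proposition~\ref{Prop:QuantumProd1}): for any single idempotent $e_k$,
$$\omega^{r,\widetilde{\tau}}_{g,n}(e_k^{\otimes n}) = \Delta_k^{1-g}\,, \qquad \omega^{r,\widetilde{\tau}}_{g,n}(e_{k_1}\otimes\cdots\otimes e_{k_n})=0\ \text{unless all } k_i\text{ coincide.}$$
Expanding each $\hhd_{a_i}$ in the idempotent basis then yields
$$\omega^{r,\widetilde{\tau}}_{g,n}(\hhd_{a_1}\otimes\cdots\otimes\hhd_{a_n}) \;=\; \sum_{k=0}^{r-2} \zeta^{k\sum_i a_i}\,\Delta_k^{1-g}\,.$$
Substituting the value of $\Delta_k$ and collecting exponents gives a prefactor $\phi^{(g-1)(r-2)/(r-1)}(r-1)^{g-1}$ times the character sum $\sum_k \zeta^{k(\sum a_i + (r-2)(g-1))}$. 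Since $(r-2)(g-1)\equiv -(g-1)\pmod{r-1}$, this sum equals $r-1$ when $g-1-\sum_i a_i\equiv 0\pmod{r-1}$ and vanishes otherwise, which produces exactly the stated formula.

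The steps are essentially bookkeeping once the group-algebra identification is in hand; the only potential pitfall is keeping the various powers of $\phi$ and $r-1$ straight (coming from the rescaling $\hhd_a=\phi^{-a/(r-1)}\partial_a$ and from the metric $\eta$), so I would double-check the character sum and the exponent $(g-1)(r-2)/(r-1)$ by testing the base case $(g,n)=(0,3)$ against Proposition~\ref{lqq}.
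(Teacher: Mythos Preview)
Your proof is correct and complete. It does, however, take a genuinely different route from the paper's argument.

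The paper never writes down the idempotents at $\widetilde{\tau}$. Instead it computes $\omega^{r,\widetilde{\tau}}_{0,3}$ directly from Proposition~\ref{lqq}, restricts to a maximally degenerate curve, and \emph{counts} the admissible weightings of the half-edges by hand: the conditions $a_h+a_{h'}=r-2$ at each node and $\sum a+1\equiv 0 \pmod{r-1}$ on each trinion leave exactly one free $\mathbb{Z}/(r-1)$-choice per independent cycle of the dual graph, giving $(r-1)^g$ weightings when the global divisibility condition holds and none otherwise. The $\phi$ exponent is then read off from the contribution of each trinion and each node.

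Your approach is exactly the method the paper uses at the \emph{other} shift $\tau$ (Propositions~\ref{Prop:idempotents} and~\ref{Prop:QuantumProd1}): find the idempotents, compute their $\eta$-squares, and plug into the formula $\omega_{g,n}(e_k^{\otimes n})=\Delta_k^{1-g}$. At $\widetilde{\tau}$ this is particularly clean because the algebra is the group algebra of $\mathbb{Z}/(r-1)$ and the idempotents are literally given by characters, so the whole computation collapses to a character-orthogonality sum. What you gain is a uniform treatment of both shifts and an explicit idempotent basis (useful if one later wants normalized-idempotent formulas at $\widetilde{\tau}$); what the paper's direct count gains is that it avoids complex roots of unity and any appeal to semisimplicity, and makes the factor $(r-1)^g$ appear as a transparent count rather than as $(r-1)^{g-1}\cdot(r-1)$ from a character sum.
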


\begin{proof}
From Proposition~\ref{lqq} and the definition of $\hhd_a$ we get
$$
\omega^{r,\widetilde{\tau}}_{0,3} (\hhd_a \otimes \hhd_b \otimes \hhd_c)
= \left|
\begin{array}{ll}
\phi^{-\frac{r-2}{r-1}} & \mbox{ if } \; a+b+c = -1 \mod {r-1},\\
0 & \mbox{ else}.
\end{array}
\right.
$$ %DZ: Added this sentence and formula.
The topological field theory $\omega^{r,\widetilde{\tau}}_{g,n}$ for general $g$ and $n$  can be computed
by restricting the $\widetilde{\tau}$-shifted $r$-spin theory $\W^{r,\widetilde{\tau}}_{g,n}$ 
to $$[C]\in \oM_{g,n}\, ,$$
where $C$ is a completely degenerate curve with 
$2g-2+n$ rational components and $3g-3+n$ nodes.

The $3g-3+n$ nodes divide the $C$ into genus 0 components with 3 special points each. 
By the splitting axiom,
we must place insertions  $\{0, \dots, r-2 \}$ on every branch of every node
in a manner such that the following conditions are satisfied:
\begin{enumerate}
\item[(i)] the sum of the two insertions at each node equals $r-2$,
\item[(ii)] the sum of the three insertions on  each rational component of the curve plus 1
 is divisible by $r-1$.
\end{enumerate}
Conditions (i) and (ii) are impossible to satisfy if $g-1-\sum_{i=1}^n a_i$ is not divisible by $r-1$. 

If the divisibility condition is satisfied, we can first place an arbitrary insertion on 
a single branch of a node of every independent cycle of the dual graph of the curve. Then, the other 
insertions are uniquely determined. We find exactly $(r-1)^g$ possibilities.

Now each rational component contributes a factor of $\phi^{-\frac{r-2}{r-1}}$ and each node a factor $\phi^{\frac{r-2}{r-1}}$ (the inverse of the metric). Collecting all the factors we get
$$
\phi^{(g-1)\frac{r-2}{r-1}}(r-1)^g.
$$ %DZ: Added this paragraph.
\end{proof}

\subsection{Euler field and shifted degree} \label{Ssec:shift2}
The operator of quantum multiplication by the Euler field at $\widetilde{\tau}$,
$$E= (r-1) \phi^{\frac{r}{r-1}}\, \hhd_{1}\, ,$$ %DZ: replaced  (r-2)/(r-1)  with  r/(r-1)  in the power of phi.
is  given in the frame $\{\hhd_a\}$ by the matrix
$$
\xi = 
(r-1) \phi^{\frac{r}{r-1}} %DZ: replaced  (r-2)/(r-1)  with  r/(r-1)  in the power of phi.
\begin{pmatrix}
\nice 0 & \nice \cdots & \nice \cdots & \nice 0  & \nice 1\\
 \qquad  & \qquad & \qquad & \qquad & \qquad \\
\nice 1 & \nice 0 & &  & 0 \\
\\
\nice 0 & \nice 1 & \nice 0 &  & \nice 0 \\
\\
\nice \vdots & \nice \ddots & \nice \ddots  & \nice \ddots & \nice \vdots \\
\\
\nice 0 & \nice \cdots & \nice 0  & \nice 1 & \nice 0 
\end{pmatrix}
$$
or $\xi^0_{r-2} = \xi^{a+1}_a = (r-1) \phi^{\frac{r}{r-1}}$. %DZ: replaced  (r-2)/(r-1)  with  r/(r-1)  in the power of phi.
In the same frame, the shifted degree operator is  
$$
\mu = \frac1{2r}
\begin{pmatrix}
\nice -(r-2) & \nice 0 & \nice \cdots & \nice \cdots & \nice 0  \\
 \qquad & \qquad  & \qquad  & \qquad & \qquad \\
\nice 0 & -(r-4) & \nice 0 &  & 0 \\
\\
\nice \vdots & \nice \ddots & \nice \ddots  & \nice \ddots & \nice \vdots \\
\\
\nice 0 &  & \nice 0  & r-4 & \nice 0 \\
\\
\nice 0  & \nice \cdots  & \nice \cdots & \nice 0 & \nice r-2 
\end{pmatrix}
$$
or $\mu_a^a = \frac{2a-r+2}{2r}$.

\subsection{The $R$-matrix} \label{Ssec:Rmatrix2}

Define the polynomials $P_m(r,a)$ by the following recursive procedure.  Let $P_0(r,a) = 1$. For $m \geq 1$, let
\begin{align} \label{Eq:DefP}
P_m(r,a) &= 
\frac12\sum_{b=1}^a (2mr-r-2b) P_{m-1}(r,b-1)\\ \nonumber
& - \frac1{4mr(r-1)}
\sum_{b=1}^{r-2} (r-1-b)(2mr-b)(2mr-r-2b) P_{m-1}(r,b-1)\, .
\end{align}

The second summation can be extended to $r-1$ instead 
of $r-2$ because of the presence of the factor $r-1-b$. 
The second sum is then easily seen to be divisible both by $r$ and by $r-1$, so  $P_m$ is indeed a polynomial. The first few values are:
\begin{align*}
P_0 & = 1, \\
P_1 &= \frac12 a(r-1-a) - \frac1{24}(2r-1)(r-2),\\
P_2 & = \frac18 a^4 -\frac1{12} a^3(5r-1) + \frac1{48} a^2 (20r^2-5r-4)
-\frac1{48} a (r-1)(6r^2+7r-2)\\
& \ \ \ + \frac1{1152}(2r-1)(r-2)(2r^2+19r+2)\, .
\end{align*}
There appears to be no closed formula for the polynomials $P_m$. However, 
we will present a  closed expression for $P_m(0,a)$  in Proposition~\ref{Prop:Pcstterm} in terms of Bernoulli polynomials.

\begin{lemma} \label{Lem:P}
The polynomials $P_m$ satisfy the relations
\begin{align}
\label{Eq:P1}
P_m(r,a) - P_m(r,a-1) & = \frac12 (2mr-r-2a) P_{m-1}(r,a-1) \\
\label{Eq:P2}
P_m(r,0) &= P_m(r,r-1)
\end{align}
and are the unique solutions to these equations with initial condition $P_0=1$.
\end{lemma}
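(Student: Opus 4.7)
The plan is to verify \eqref{Eq:P1} directly from the definition, establish \eqref{Eq:P2} by induction on $m$, and then derive uniqueness by reading \eqref{Eq:P2} one index ahead as a constraint on the previous polynomial.

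The identity \eqref{Eq:P1} is immediate: the second summation in \eqref{Eq:DefP} is independent of $a$, while the first summation telescopes, so that $P_m(r,a)-P_m(r,a-1)$ is exactly the $b=a$ term, namely $\tfrac{1}{2}(2mr-r-2a)P_{m-1}(r,a-1)$.

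For \eqref{Eq:P2}, I induct on $m$. The base $m=0$ is trivial, and $m=1$ reduces to the elementary identity $\sum_{b=1}^{r-1}(r-2b)=0$. For $m\geq 2$, summing \eqref{Eq:P1} over $a=1,\ldots,r-1$ gives
$$P_m(r,r-1)-P_m(r,0)=\tfrac{1}{2}\sum_{b=1}^{r-1}(2mr-r-2b)\,P_{m-1}(r,b-1).$$
I write $P_{m-1}(r,b-1)=\widetilde{P}_{m-1}(r,b-1)+P_{m-1}(r,0)$, where $\widetilde{P}_{m-1}$ denotes the first summation in \eqref{Eq:DefP} taken at index $m-1$. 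Swapping the order of summation in the $\widetilde{P}_{m-1}$-contribution and applying the identity
$$\sum_{b=c+1}^{r-1}(2mr-r-2b)=(r-1-c)(2(m-1)r-c)$$
reduces that contribution to $\tfrac{1}{4}\sum_{b=1}^{r-2}(r-1-b)(2(m-1)r-b)(2(m-1)r-r-2b)P_{m-2}(r,b-1)$. The remaining $P_{m-1}(r,0)$-contribution equals $(m-1)r(r-1)\cdot P_{m-1}(r,0)$, using $\sum_{b=1}^{r-1}(2mr-r-2b)=2(m-1)r(r-1)$. Substituting the explicit formula for $P_{m-1}(r,0)$ from the second summation of \eqref{Eq:DefP} at index $m-1$, the two contributions cancel exactly. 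This cancellation is the whole point of the second summation in \eqref{Eq:DefP}: it is engineered to make this inductive step close.

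For uniqueness, \eqref{Eq:P1} determines each $P_m$, given $P_{m-1}$, up to the single additive constant $C_m=P_m(r,0)$. The relation \eqref{Eq:P2} at index $m+1$, expanded via \eqref{Eq:P1}, is equivalent to
$$\sum_{b=1}^{r-1}(2(m+1)r-r-2b)\,P_m(r,b-1)=0,$$
and the coefficient of $C_m$ here is $2mr(r-1)$, which is nonzero for $m\geq 1$ and $r\geq 2$, so $C_m$ is pinned down. Starting from $P_0=1$ and iterating, the entire sequence is uniquely determined.

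The main obstacle is the combinatorial rearrangement in the inductive step for \eqref{Eq:P2}: one has to reorder the nested sums correctly and recognize that the apparently ad hoc second summation in \eqref{Eq:DefP} is exactly the correction needed for the induction to close. Everything else is routine bookkeeping.
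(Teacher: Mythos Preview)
Your proof is correct and follows essentially the same approach as the paper's: both verify \eqref{Eq:P1} by telescoping, both prove \eqref{Eq:P2} by summing the difference relation and substituting the two-part definition \eqref{Eq:DefP} of the previous polynomial to obtain an exact cancellation (using the same summation identity $\sum_{b=c+1}^{r-1}(2mr-r-2b)=(r-1-c)(2(m-1)r-c)$, up to an index shift), and both pin down the $a$-independent constant via \eqref{Eq:P2} at the next index. One small remark: your ``induction on $m$'' framing is harmless but unnecessary, since for $m\geq 2$ your cancellation argument never invokes the inductive hypothesis $P_{m-1}(r,0)=P_{m-1}(r,r-1)$; it is a direct computation, just as in the paper.
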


\begin{proof} 
Given $P_{m-1}$, equation \eqref{Eq:P1} determines $P_m$ uniquely up to a polynomial in $r$ independent of~$a$. 
Equation~\eqref{Eq:P2} for $P_{m+1}$ then determines this polynomial in~$r$. 
The uniqueness statement is therefore established.

Equation~\eqref{Eq:P1} follows directly from the definition of $P_m$.
To show that the second equation is satisfied by $P_{m+1}$, a 
calculation is required. The definition of $P_m$ implies 
$$
P_{m+1}(r,r-1) - P_{m+1}(r,0) = \frac12  \sum_{a=0}^{r-2} (2mr+r-2a-2) P_m(r,a)\, ,
$$
where we have substituted $a=b-1$ for the summation variable. 
From the definition of $P_m$, we obtain
\begin{align} \label{Eq:sum}
\nonumber
P_{m+1}(r,r-1) - P_{m+1}(r,0) = \hspace{-10em}\\
%\label{Eq:sum}
 &\frac14 \sum_{a=0}^{r-2} \sum_{b=1}^a (2mr+r-2a-2)(2mr-r-2b) P_{m-1}(r,b-1)\\
 \nonumber
& - \frac1{8mr(r-1)} \sum_{a=0}^{r-2} (2mr+r-2a-2) \\
\nonumber
& \hspace{40pt} \times
\sum_{b=1}^{r-2} (r-1-b)(2mr-b)(2mr-r-2b) P_{m-1}(r,b-1)\, .
\end{align}
Using the evaluation
$$
\sum_{a=b}^{r-2} (2mr+r-2a-2) = (r-1-b)(2mr-b),
$$
we obtain
$$
\sum_{a=0}^{r-2} (2mr+r-2a-2) = 2mr(r-1)\, .
$$
In equation~\eqref{Eq:sum},
 we exchange the summation order in the first term and use the identities
 above. We obtain
\begin{align*}
P_{m+1}(r,r-1) - P_{m+1}(r,0) = \hspace{-10em}\\
 &\frac14 \sum_{b=1}^{r-2} (r-1-b)(2mr-b)(2mr-r-2b) P_{m-1}(r,b-1)\\
& - \frac{1}{8mr(r-1)} 2mr(r-1) \\
&\hspace{40pt} \times
\sum_{b=1}^{r-2} (r-1-b)(2mr-b)(2mr-r-2b) P_{m-1}(r,b-1)\, ,
\end{align*}
which clearly vanishes.
\end{proof}

\begin{proposition} \label{Prop:Rmatrix2}
The unique solution $R(z)=\sum\limits_{m=0}^\infty R_m z^m \in \End(V_r)[[z]]$ of the equations
$$
[R_{m+1},\xi] = (m+\mu) R_m
$$
with the initial condition $R_0=1$ has coefficients
$$
(R_m)^b_a = \left[-r(r-1)\phi^{\frac{r}{r-1}}\right]^{-m} \, P_m(r,r-2-b) \,, \ \ \ \  %DZ: replaced  (r-2)/(r-1)  with  r/(r-1)  in the power of phi.
 \mbox{if} \qquad  b+m = a \mod{r-1}
$$
and 0 otherwise. 
The inverse matrix $R^{-1}(z)$ has coefficients
$$
(R_m^{-1})^b_a = \left[r(r-1)\phi^{\frac{r}{r-1}}\right]^{-m} \, P_m(r,a)\,, \ \ \ \  %DZ: replaced  (r-2)/(r-1)  with  r/(r-1)  in the power of phi.
 \mbox{if} \qquad  b+m = a \mod{r-1}
 $$
 and 0 otherwise.
\end{proposition}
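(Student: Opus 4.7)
The plan is to verify directly that the matrix $R(z)$ described in the Proposition satisfies the recursion $[R_{m+1},\xi]=(m+\mu)R_m$ with initial condition $R_0 = 1$, and to invoke uniqueness from the semisimplicity of the Frobenius algebra at $\widetilde{\tau}$. Semisimplicity here is essentially immediate because, in the frame $\{\hhd_a\}$, the quantum product is the group algebra of $\Z/(r-1)\Z$, as is visible from Section~\ref{Ssec:TopFT2}.

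The first step is to unpack the commutator $[R_{m+1},\xi]^b_a$ using the explicit form of $\xi$. Since $\xi$ acts on the frame as the cyclic shift $\sigma\colon a \mapsto a+1 \pmod{r-1}$ scaled by $(r-1)\phi^{r/(r-1)}$, we obtain
\[
[R_{m+1},\xi]^b_a = (r-1)\phi^{r/(r-1)}\bigl[(R_{m+1})^b_{\sigma(a)} - (R_{m+1})^{\sigma^{-1}(b)}_a\bigr].
\]
A quick check of the congruence $b+m\equiv a\pmod{r-1}$ shows that the two terms on the right, together with the right-hand side $(m+\mu)^b_b(R_m)^b_a$ of the recursion, are simultaneously nonzero or zero, so there is nothing to prove outside the cyclic support.

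When all three are nonzero, introducing $a'=r-2-b$ and collecting prefactors turns the recursion into the scalar identity
\[
P_{m+1}(r, a'\oplus 1) - P_{m+1}(r, a') = \tfrac{1}{2}(2mr+r-2-2a')\,P_m(r, a'),
\]
where $a'\oplus 1$ equals $a'+1$ if $a'\le r-3$ and $0$ if $a'=r-2$. For $a'\le r-3$ this is exactly equation~\eqref{Eq:P1} of Lemma~\ref{Lem:P} after a shift of indices, so no additional input is needed. The main point to get right is the wraparound case $a'=r-2$: here one applies \eqref{Eq:P1} at $a = r-1$ and then uses the boundary identity \eqref{Eq:P2}, $P_{m+1}(r,0) = P_{m+1}(r,r-1)$, to rewrite the conclusion in terms of $P_{m+1}(r,0)$. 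This cyclic compatibility is the conceptual heart of the argument and is the reason the seemingly extraneous condition \eqref{Eq:P2} is exactly what is required for the polynomials $P_m(r,a)$ to assemble into an $R$-matrix.

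Finally, the formula for $R^{-1}(z)$ will follow from the symplectic condition $R^{-1}(z) = R^{*}(-z)$ for Givental $R$-matrices. Using the metric $\langle \hhd_a, \hhd_b \rangle = \phi^{-(r-2)/(r-1)}\delta_{a+b,r-2}$, a short calculation gives $(R^{*})^c_d = R^{r-2-d}_{r-2-c}$; combining this with $(R_m^{-1})^c_d = (-1)^m(R_m^{*})^c_d$ and simplifying the indices via $r-2-(r-2-d)=d$ yields the stated formula, the sign $(-1)^m$ being precisely what converts the factor $-r(r-1)\phi^{r/(r-1)}$ into $r(r-1)\phi^{r/(r-1)}$.
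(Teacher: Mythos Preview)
Your proof is correct and follows essentially the same route as the paper's: verify the recursion directly by reducing to the difference identity \eqref{Eq:P1} for $P_m$, handle the cyclic wraparound via \eqref{Eq:P2}, and appeal to semisimplicity for uniqueness. Your derivation of the $R^{-1}$ formula from the symplectic condition $R^{-1}(z)=R^{*}(-z)$ is in fact more explicit than the paper's own proof of this Proposition, which stops after checking the recursion and leaves the inverse unaddressed (though the same symplectic argument is used in the analogous $\tau$-shift proposition in Section~\ref{Ssec:Rmatrix1}).
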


\begin{proof} The {\em uniqueness} of the solution follows from the semisimplicity of the Frobenius manifold $V_r$ at $\tau$ (proven
in Section \ref{xxx123}). Since $P_0=1$, 
the formula for $R_0$ yields the identity matrix. 
We must check that the formula 
for $R$ is indeed a solution of the recursion
$$
[R_{m+1},\xi] = (m+\mu) R_m.
$$
Explicitly, we must show
$$
(R_{m+1})_{a+1}^b \xi^{a+1}_a - \xi^b_{b-1} (R_{m+1})_a^{b-1}
= ( m + \mu_b^b) (R_m)_a^b,
$$
or, equivalently,
$$
(r-1)\phi^{\frac{r}{r-1}} %DZ: replaced  (r-2)/(r-1)  with  r/(r-1)  in the power of phi.
 \left[(R_{m+1})_{a+1}^b -  (R_{m+1})_a^{b-1} \right]
= \frac{2mr -r+2b+2}{2r} (R_m)_a^b,
$$
where both $a+1$ and $b-1$ are understood modulo $r-1$.

The nonvanishing condition $b+m = a \pmod{r-1}$ is simultaneously satisfied or not satisfied in all three terms of the equality.
The formula for $R_m$ contains the factor
$\left[-r(r-1) \phi^{\frac{r}{r-1}}\right]^{-m}$. %DZ: replaced  (r-2)/(r-1)  with  r/(r-1)  in the power of phi.
After using these two observations, we obtain
 the final form of the equality to be checked:
\begin{equation} \label{Eq:diff1}
P_{m+1}(r,r-1-b) - P_{m+1}(r,r-2-b) = \frac12 (2mr - r +2b+2) P_m(r,r-2-b)\, .
\end{equation}
Here, the argument $r-1-b$ of the polynomials should be taken modulo $r-1$. In other words, when $b=0$ the equality reads
\begin{equation} \label{Eq:diff2}
P_{m+1}(r,0) - P_{m+1}(r,r-2) = \frac12 (2mr - r +2) P_m(r,r-2)\, .
\end{equation}
We first prove \eqref{Eq:diff1}. After replacing $m$ with $m+1$ in \eqref{Eq:P1},
 we obtain
$$
P_{m+1}(r,a) - P_{m+1}(r,a-1) = \frac12 (2mr+r-2a) P_m(r,a-1)\, .
$$
After substituting $a=r-1-b$, we have
\begin{align*}
P_{m+1}(r,r-1-b) - P_{m+1}(r,r-2-b)  = \hspace*{-7em} \\
& = \frac12 \left( 2mr+r-2(r-1-b)\right) P_m(r,r-2-b)
\\
&= \frac12  (2mr - r +2b+2) P_m(r,r-2-b),
\end{align*}
which is exactly \eqref{Eq:diff1}. In particular, for $b=0$, we find
$$
P_{m+1}(r,r-1) - P_{m+1}(r,r-2)  = \frac12 (2mr - r +2) P_m(r,r-2).
$$
Equation \eqref{Eq:diff2} now follows from  
the equality $P_{m+1}(r,r-1) = P_{m+1}(r,0)$ of equation
\eqref{Eq:P2}.
\end{proof}

Let $\mathsf{W}^{r,\widetilde{\tau}}$ be the cohomological field theory given by the shift of Witten's $r$-spin class by the vector $\widetilde{\tau} = (0, r \phi, 0 , \dots, 0)$.
Define
%$\Omega^{r,\widetilde{\tau}}_{g,n}$ be the cohomological field theory given by the shift of Witten's $r$-spin class by the vector $\widetilde{\tau} = (0, r \phi, 0 , \dots, 0)$,
$$\Omega^{r,\widetilde{\tau}}= R.\omega^{r,\widetilde{\tau}} $$ 
by the action of the $R$-matrix of Proposition~\ref{Prop:Rmatrix2} 
on the topological field theory $\omega^{r,\widetilde{\tau}}_{g,n}$ of Proposition~\ref{Prop:TopFT2}.
Using Teleman's classification and the dimension analysis of
Section \ref{dddd4},
we obtain the following result parallel to Theorem 
\ref{Thm:TautRel1} and Theorem \ref{Thm:rWitten} for the shift by 
${\tau} = (0,  \dots, 0, r\phi)$. As an outcome,
we obtain a second formula for Witten's $r$-spin class.

\begin{theorem} \label{Thm:SecondShift}
$W^r_{g,n}(a_1, \dots, a_n)$ equals
the part of  $\Omega^{r, \widetilde{\tau}}_{g,n}$ of degree 
$$
\D^r_{g,n}(a_1, \dots, a_n)=\frac{(r-2)(g-1) + \sum_{i=1}^n a_i}r\, 
$$
in $H^*(\overline{\mathcal{M}}_{g,n})$.
The parts of $\Omega^{r, \widetilde{\tau}}_{g,n}$ of degree
higher than $\D^r_{g,n}$ vanish. 
\end{theorem}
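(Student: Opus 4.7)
The plan is to mirror the proof of Theorem~\ref{Thm:rWitten} in Section~\ref{dddd4}, now applied at the second semisimple point $\widetilde{\tau}=(0,r\phi,0,\ldots,0)$. All the pieces are in place: Section~\ref{Ssec:TopFT2} shows that the Frobenius algebra at $\widetilde{\tau}$ is semisimple (it is the group algebra of $\Z/(r-1)\Z$ up to a scalar), Proposition~\ref{Prop:TopFT2} computes the associated topological field theory $\omega^{r,\widetilde{\tau}}$, and Proposition~\ref{Prop:Rmatrix2} gives the unique $R$-matrix determined by the recursion $[R_{m+1},\xi]=(m+\mu)R_m$. Teleman's classification of semisimple CohFTs therefore reconstructs the shifted theory:
$$
\mathsf{W}^{r,\widetilde{\tau}}_{g,n} \;=\; R.\omega^{r,\widetilde{\tau}}_{g,n} \;=\; \Omega^{r,\widetilde{\tau}}_{g,n}\, .
$$

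Next I would perform the degree count, completely parallel to the one in Section~\ref{dddd4} but with the single nonzero component of $\widetilde{\tau}$ sitting at $e_1$ rather than $e_{r-2}$. By definition,
$$
\mathsf{W}^{r,\widetilde{\tau}}_{g,n}(e_{a_1}\otimes\cdots\otimes e_{a_n}) = \sum_{m\geq 0}\frac{(r\phi)^m}{m!}\, p_{m*}\, W^r_{g,n+m}(a_1,\ldots,a_n,\underbrace{1,\ldots,1}_{m})\, .
$$
Each extra insertion $e_1$ increases the complex degree of Witten's class by $1/r$, while the push-forward $p_{m*}$ lowers degree by $m$. Thus the $m$-th term has complex degree
$$
\frac{(r-2)(g-1)+\sum_i a_i + m}{r}-m \;=\; \D^r_{g,n}(a_1,\ldots,a_n) - \frac{m(r-1)}{r}\, ,
$$
which is strictly less than $\D^r_{g,n}(a_1,\ldots,a_n)$ for every $m\geq 1$. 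The term $m=0$ is exactly $W^r_{g,n}(a_1,\ldots,a_n)$, so the top-degree part of $\mathsf{W}^{r,\widetilde{\tau}}_{g,n}=\Omega^{r,\widetilde{\tau}}_{g,n}$ is Witten's class and there is nothing in strictly higher degree.

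There is no real obstacle here beyond careful bookkeeping: the heavy lifting was done in establishing Proposition~\ref{Prop:Rmatrix2} (solving the recursion via the polynomials $P_m(r,a)$) and in checking semisimplicity of $\omega^{r,\widetilde{\tau}}$ in Section~\ref{Ssec:TopFT2}. The only point requiring a small amount of care is the interpretation of degrees in the shifted sum: one must know that push-forward $p_{m*}$ strictly lowers complex degree by $m$ and that Witten's class $W^r_{g,n+m}$ is homogeneous of degree $\D^r_{g,n+m}$. With these facts in hand, the proof reduces to invoking Teleman once and reading off the two claims from the degree inequality above.
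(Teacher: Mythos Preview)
Your proposal is correct and follows exactly the paper's own approach: the paper states just before the theorem that the result is obtained ``using Teleman's classification and the dimension analysis of Section~\ref{dddd4},'' and your argument reproduces precisely that, with the degree computation adapted from the $e_{r-2}$ shift (contribution $-2m/r$) to the $e_1$ shift (contribution $-m(r-1)/r$). Nothing further is needed.
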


A more explicit formula for the cohomological
field theory $\Omega^{r, \widetilde{\tau}}$
 is provided in Proposition~\ref{Prop:SecondShift} of Section \ref{polypoly}.

\subsection{Proof of Theorem \ref{Thm:poly}}\label{polypoly}

We prove here Theorem~\ref{Thm:poly}: for $\sum_{i=1}^n a_i = 2g-2$, 
the rescaled Witten class
\begin{equation}\label{vttp}
\mathcal{W}_{g;a_1, \dots, a_n}(r) = r^{g-1} W^r_{g,n}(a_1, \dots, a_n)\ \in 
RH^{g-1}(\overline{\cM}_{g,n})
\end{equation}
is polynomial for sufficiently large values of~$r$. Our proof will show
the polynomial \eqref{vttp} is always divisible by $r-1$.

Theorems \ref{Thm:rWitten} and \ref{Thm:SecondShift} provide two formulas for Witten's $r$-spin class obtained
by shifting by $e_{r-2}$ and $e_1$ respectively. The formula
of Theorem \ref{Thm:rWitten} has an explicit $R$-matrix, but the topological field theory is not explicitly polynomial in~$r$. 
We will therefore use the formula
of Theorem \ref{Thm:SecondShift} which is explicitly polynomial. 
From now on, we will always assume that $r$ is sufficiently large.

Denote by $\mathsf{G}_{g,n+k}$ the set of stable graphs with $n+k$ legs.
The legs marked by $n+1$ to $n+k$, termed the {\em $\kappa$-legs}, will
correspond to marked points forgotten by a forgetful map.
The $\psi$ classes corresponding to the
$\kappa$-legs  push-forward to $\kappa$ classes.

Theorem~\ref{Thm:SecondShift} applied when  $\sum_{i=1}^n a_i = 2g-2$ expresses the rescaled Witten class $\mathcal{W}_{g;a_1, \dots, a_n}(r)$ as a sum over stable graphs with weightings.

\begin{definition} \label{Def:weighting}
Consider a stable graph $\Gamma$ of genus~$g$ with $n+k$ marked legs. 
A {\em weighting} $\mathbf{a}$ of\/ $\Gamma$ is a function on the set of
half-edges
$$\mathsf{H}(\Gamma) \rightarrow \{0, \dots, r-2 \}\, , \ \ \ h \mapsto a_h$$
 satisfying the following properties:
\begin{itemize}
\item[$\bullet$] if $h$ and $h'$ are the two half-edges of a single edge, 
then $a_h + a_{h'}=r-2$,
\item[$\bullet$] If $h$ 
corresponds to  the leg $i$ for $1 \leq i \leq n$, then\footnote{If $r-1$ were smaller than $a_i$, the weighting of the leg would be $a_i \mod r-1$, but here we assume $r$ is large.} $a_h=a_i$,
\item[$\bullet$] If $h$ is a $\kappa$-leg, then $a_h = 0$.
\end{itemize}
\end{definition}

To every vertex $v$ of a stable graph, we assign a formal variable $x_v$ satisfying $x_v^{r-1}=1$. For a polynomial $\Pi$ in variables $x_v$, we will denote by $\{ \Pi \}_x$ the term of degree~0 in all variables~$x_v$.

%Define the polynomials $p_m(r)$ by
%$$
%\sum_{m \geq 1} p_m(r)z^m = -\ln \left[ \sum_{m \geq 0} P_m(r,0) z^m \right].
%$$
Given a stable graph $\Gamma$ with weighting, we assign 
%to each vertex $v$ the {\em vertex factor}
%$$
%X(v) =  x_v^{g-1}.
%\exp \left[\sum_{m \geq 1} p_m(r) x^m \kappa_m \right].
%$$
%Here $x = x_v$ is the variable of the vertex and $\kappa_m$ is the $\kappa$-class on the moduli space $\oM_v$ corresponding to~$v$.
%
%Further, 
to each edge $e\in \mathsf{E}(\Gamma)$ 
%we assign 
the {\em edge factor}
$$
\Delta(e) = \frac1{x^a y^b}\, \frac{1- \sum\limits_{m,\ell \geq 0} 
P_m(r,a) P_\ell(r,b) (x\psi')^m  (y\psi'')^\ell }{\psi' + \psi''}\, .
$$
Here, $a$ and $b$ are the weightings of the half-edges of~$e$, $\psi'$ and $\psi''$ are the corresponding cotangent line classes, and $x,y$ are the vertex variables 
corresponding to the vertices adjacent to the edge (if the edge is a loop,
 then $x=y$).

To each leg $i$ for $1 \leq i \leq n$ we assign the {\em leg factor}
$$
L(i) = \frac1{x^{a_i}_v} \sum_{m \geq 0} P_m(r,a_i)  (x_v\psi_i)^m,
$$
where $a_i$ is the weighting of the leg, $\psi_i$ is the 
cotangent line associated to the leg, and $x_v$ is the vertex variable of the vertex to which the leg is attached.

Finally, to each $\kappa$-leg $i$ for $n+1 \leq i \leq n+k$, we assign the {\em $\kappa$-factor}
$$
K(i) = - \psi_i \sum_{m \geq 1} P_m(r,0)  (x_v\psi_i)^m,
$$
where $\psi_i$ is the cotangent line class of the leg, and $x_v$ is the vertex variable of the vertex to which the leg is attached.

\begin{proposition} \label{Prop:SecondShift}
The class $\mathcal{W}_{g;a_1, \dots, a_n}(r)$ is  given by the degree $g-1$ part of the mixed degree cohomology class
$$
\sum_{k \geq 0} \sum_{
\substack{\Gamma \in \mathsf{G}_{g,n+k}\\
\mbox{\rm \scriptsize weightings} \; \mathbf{a}}
} \frac{(r-1)^{1-h^1(\Gamma)}}{|\Aut(\Gamma)|} p_*
\left\{  \prod_v x_v^{g_v-1} \prod_e \Delta(e) \prod_{i=1}^{n} L(i) 
\prod_{i=n+1}^{n+k} K(i) \right\}_x,
$$
where $p: \oM_\Gamma \to \oM_{g,n}$ is the natural boundary map forgetting the last $k$ marked points.
\end{proposition}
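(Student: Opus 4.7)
The plan is to unpack Theorem~\ref{Thm:SecondShift} using the stable graph formula for Givental's $R$-matrix action on the topological field theory $\omega^{r,\widetilde{\tau}}$, as described in \cite[Section 2]{PaPiZv}. Since $\sum a_i = 2g-2$ implies $\D^r_{g,n}(a_1,\ldots,a_n) = g-1$, the rescaled class $\mathcal{W}_{g;a_1,\ldots,a_n}(r)$ is $r^{g-1}$ times the degree-$(g-1)$ part of $\Omega^{r,\widetilde{\tau}}_{g,n}(e_{a_1}\otimes\cdots\otimes e_{a_n})$. Expanding $\Omega^{r,\widetilde{\tau}} = R.\omega^{r,\widetilde{\tau}}$ as a graph sum gives a sum over $\Gamma \in \mathsf{G}_{g,n+k}$ (the extra $k$ legs being the kappa-legs arising from the translation part of the action) in which each vertex $v$ contributes $\omega^{r,\widetilde{\tau}}_{g_v,n_v+k_v}$, each ordinary leg $i$ contributes an $R^{-1}(\psi_i)$ action on the insertion, each edge $e$ contributes the Givental bi-vector $\bigl[\eta^{-1} - \bigl(R^{-1}(\psi'_e)\otimes R^{-1}(\psi''_e)\bigr)\eta^{-1}\bigr]/(\psi'_e+\psi''_e)$, and each kappa-leg contributes a translation term obtained from $(\mathbb{1} - R^{-1}(\psi))\widetilde{\tau}$.

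The second step is to substitute the explicit formulas. By Proposition~\ref{Prop:TopFT2}, the vertex factor at $v$ equals $\phi^{(g_v-1)(r-2)/(r-1)}(r-1)^{g_v}$ times the Kronecker delta enforcing $\sum_h a_h \equiv g_v - 1 \pmod{r-1}$, where $h$ runs over all half-edges, legs, and kappa-legs incident to $v$. By Proposition~\ref{Prop:Rmatrix2}, the entry $(R^{-1}_m)^b_a$ is nonzero only when $b+m \equiv a \pmod{r-1}$ and equals $[r(r-1)\phi^{r/(r-1)}]^{-m}P_m(r,a)$; this forces weightings in the sense of Definition~\ref{Def:weighting} after the change of basis $e_a \leftrightarrow \hhd_a$. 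The vertex divisibility constraints at $v$ are then encoded elegantly by introducing the formal variables $x_v$ with $x_v^{r-1} = 1$: placing $x_v^{g_v-1}$ at each vertex, absorbing the factor $x_v^{-a_h}$ into each incident leg/edge/kappa-leg contribution, and encoding the $\psi$-exponents $m_h$ by $x_v^{m_h}$, the $\{\cdot\}_x$ extraction picks out exactly the terms satisfying the divisibility conditions. Rearranging the resulting sums identifies the leg factor $L(i)$, the kappa-leg factor $K(i)$ (which comes from the translation term $T(z) = z(\mathbb{1}-R^{-1}(z))e_1$ evaluated at the shift vector $\widetilde{\tau}$, explaining the absent $m=0$ term), and the edge factor $\Delta(e)$ (using the symplectic identity $R(z)R^*(-z) = \mathbb{1}$ to regularize the numerator).

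The final step is bookkeeping of the scaling factors. Using $\sum_v g_v = g - h^1(\Gamma)$ and $\sum_v (g_v-1) + E = g-1$ for connected $\Gamma$, the vertex TFT factors and the edge inverse-metric factors $\phi^{(r-2)/(r-1)}$ combine to $\phi^{(g-1)(r-2)/(r-1)}(r-1)^{g-h^1(\Gamma)}$. The $\phi$-dependence coming from all $R^{-1}$ insertions along the graph equals $\phi^{-(r/(r-1))M}$ where $M = \sum_h m_h$ is the total $\psi$-power, and the degree constraint on the extracted class forces these to combine with the vertex $\phi$ to give a $\phi$-independent result at degree $g-1$. The prefactor $r^{g-1}$ from $\mathcal{W}$ together with the $[r(r-1)]^{-M}$ from the $R^{-1}$ insertions then reduces, after expressing everything in terms of $P_m(r,\cdot)$ and $x_v$'s, to the claimed $(r-1)^{1-h^1(\Gamma)}$. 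The main obstacle will be this last bookkeeping step: the precise balance of $\phi$, $r$, and $r-1$ factors from vertices, edges, legs, and kappa-legs must be tracked carefully and shown to collapse to the single prefactor $(r-1)^{1-h^1(\Gamma)}/|\mathrm{Aut}(\Gamma)|$; all the other steps are direct substitutions into the general Givental graph-sum machinery already developed in \cite[Section 2]{PaPiZv}.
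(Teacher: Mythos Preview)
Your approach is essentially the same as the paper's: both unpack Theorem~\ref{Thm:SecondShift} via the stable graph formula for $R.\omega^{r,\widetilde{\tau}}$, encode the mod~$(r-1)$ divisibility constraints from Proposition~\ref{Prop:TopFT2} using the formal variables $x_v$, and then do the $r$-- and $(r-1)$--bookkeeping. The paper streamlines your third step by simply setting $\phi=1$ at the outset (since Witten's class is independent of the shift parameter) and by observing that the cohomological degree on $\oM_{g,n}$ of a term equals the total index $M=\sum_h m_h$; hence stripping $r^{-m}$ from each $R^{-1}_m$ multiplies the degree-$(g-1)$ part by exactly $r^{g-1}$, and stripping $(r-1)^{-m}$ combines with the vertex factors $(r-1)^{g_v}$ to give $(r-1)^{g-h^1(\Gamma)}\cdot(r-1)^{-(g-1)}=(r-1)^{1-h^1(\Gamma)}$.

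One small correction: the dilaton translation is $T(z)=z(\mathbb{1}-R^{-1}(z))\mathbb{1}$ with the \emph{unit} $\mathbb{1}=e_0$, not $e_1$; this is why the $\kappa$-factor $K(i)$ involves $P_m(r,0)$ and why Definition~\ref{Def:weighting} assigns weighting $0$ to the $\kappa$-legs. The direction $e_1$ of the shift $\widetilde{\tau}$ plays no role in the translation term.
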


\begin{proof}
The formula is essentially a reformulation of the $R$-matrix action described in Theorem~\ref{Thm:SecondShift}. To simplify the computations, we set $\phi=1$.

The powers of  $x_v$ keep track of the remainders modulo~$r-1$. 
More precisely, the base vector $e_a$ corresponds to $x^{-a}$. 
The bi-vector $\eta^{ab}$ is then encoded by the expression $\frac{1}{x^ay^b}$
 with $a+b=r-2$. The matrix $R^{-1}_m$ takes $e_a$ to a multiple of $e_b$
 with $b=a-m \mod r-1$. Therefore the coefficients of $\psi^m$ in the formulas come with an $m$th power of the corresponding vertex variable. 
Finally, putting a factor $x_v^{g-1}$ on each vertex and taking the coefficient of $x_v^0$ allows one to encode the condition 
$$
g-1 - \sum a_i = 0 \mod r-1
$$
which appears in topological field theory $\omega^{r,\widetilde{\tau}}_{g,n}$. 

In order to take into account the rescaling by $r^{g-1}$, we have 
removed the factor $r^m$ from the denominator of the $m$th term $R_m$ of the $r$-matrix. Thus the degree $d$ part of the resulting mixed degree class is multiplied by $r^d$. In particular, the degree $g-1$ part, which corresponds to Witten's 
$r$-spin class, is multiplied by $r^{g-1}$ giving the rescaled class. 

Finally, we account for all the occurrences of $r-1$. There is a factor of $(r-1)^m$ in the denominator or $R_m$, leading, as above, 
to a global factor of $(r-1)^{1-g}$. There is also a factor $(r-1)^{g_v}$ in the topological field theory at the vertex~$v$. The latter yield  $(r-1)^{g-h^1(\Gamma)}$. 
After multiplying the two factors, we obtain $(r-1)^{1-h^1(\Gamma)}$.
\end{proof}

We will use  Proposition~\ref{Prop:SecondShift}
to prove the polynomiality assertion of Theorem \ref{Thm:poly}.
 However, we will temporarily remove the division by 
$\psi' + \psi''$ from the edge factor $\Delta$.
We will study the polynomiality in~$r$ of the formula
of Proposition~\ref{Prop:SecondShift} {\em without} the division by 
$\prod_{e\in \mathsf{E}(\Gamma)} (\psi'_e+\psi''_e)$.
For each stable graph $\Gamma$, we will prove the
polynomiality in $r$ of  the degree $$g-1+|\mathsf{E}(\Gamma)|$$ part of 
the formula of Proposition~\ref{Prop:SecondShift}
without  denominators, where $|\mathsf{E}(\Gamma)|$ is the number of edges.

The division by $\prod_{e\in \mathsf{E}(\Gamma)} (\psi'_e+\psi''_e)$ will be taken afterwards
by the following argument.
Consider the expression of Proposition~\ref{Prop:SecondShift} as an element of the strata algebra {\em not} quotiented by any tautological relations, not even the relations due to the degree of the cohomology class supported by $\oM_v$ being higher than the dimension of $\oM_v$ for some vertex~$v$. 
Then, $\psi'_e+\psi''_e$ is not a zero divisor in the
strata algebra. Division by $\prod_{e\in \mathsf{E}(\Gamma)} \psi'_e + \psi''_e$, when possible at all, 
is therefore uniquely defined and preserves the property of being a polynomial in~$r$. 

%We will study the formula of Proposition~\ref{Prop:SecondShift} without the factors $\psi'+\psi''$ in the denominators of the edge terms.

Let $\Gamma$ be a stable graph with $n+k$ legs. Let $\mathbf{m}$ be a function 
$$\mathbf{m}: \mathsf{H}(\Gamma) \rightarrow \mathbb{Z}_{\geq 0}\ , \ \ \ \ 
h \mapsto m_h$$
satisfying the constraint $\sum_{h\in \mathsf{H}(\Gamma)} m_h = g-1$ and the
condition
\begin{enumerate}
\item[$\bullet$]  if $h$ and $h'$ are the two half-edges of a single edge, then\\
 $(m_h,m_{h'}) \neq (0,0)$.
\end{enumerate}
Define the sum
$$
S_{\Gamma, \mathbf{m}} = \sum_{\mbox{\rm \scriptsize weightings} \; \mathbf{a}} p_*
\left\{ \prod_v x_v^{g_v-1} \prod_h P_{m_h}(r,a_h)x_{v(h)}^{m_h-a_h} \right\}_x\, .
$$
By Lemmas \ref{Lem:poly} and \ref{Lem:div} below, 
for $r$ large enough, {\em $S_{\Gamma, \mathbf{m}}$ is a polynomial in $r$ divisible by $(r-1)^{h^1(\Gamma)}$}.
By  writing the coefficients of the formula of Proposition~\ref{Prop:SecondShift}
without denominators in terms of the $S_{\Gamma,\mathbf{m}}$, we obtain
the polynomiality required for Theorem \ref{Thm:poly}.
Moreover, since the prefactor in Proposition~\ref{Prop:SecondShift}
for the rescaled Witten class is $(r-1)^{1-h^1(\Gamma)}$, we 
also conclude that the rescaled Witten class is a polynomial in $r$ 
{\em divisible by $r-1$}.

%If we sum $S_{\Gamma, \mathbf{m}}$ over all choices of $\mathbf{m}$, we will obtain exactly the degree $g-1+E(\Gamma)$ part of the expression  of Proposition~\ref{Prop:SecondShift} multiplied  by $\prod_e (\psi_e' + \psi_e'')$.

%Our goal is now is prove that . The polynomiality of the rescaled Witten class $\mathcal{W}$ will follow by the considerations above.  

\begin{lemma} \label{Lem:poly}
The sum $S_{\Gamma, \mathbf{m}}$ is a polynomial in~$r$ for $r$ large enough.
\end{lemma}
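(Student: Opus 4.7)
The plan is to unfold the constant-term operator $\{\cdot\}_x$, linearize the resulting congruences into finitely many integer branches, and then invoke standard polynomial-summation for lattice points in parametric polytopes.

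First, I would unpack $\{\cdot\}_x$. Collecting factors, the total power of $x_v$ at each vertex $v$ in the integrand equals
\[
g_v - 1 + \sum_{h \text{ at } v}(m_h - a_h),
\]
and $\{\cdot\}_x$ (enforcing $x_v^{r-1}=1$) retains only those weightings $\mathbf{a}$ for which this quantity is divisible by $r-1$ at every vertex. Setting $c_v := g_v - 1 + \sum_{h\text{ at }v} m_h$, an integer independent of $r$ and $\mathbf{a}$, the condition reads $\sum_{h \text{ at } v} a_h \equiv c_v \pmod{r-1}$ at every $v$. The pushforward $p_*$ acts on cohomology classes but does not affect the $r$-dependence of the coefficients, so it can be set aside for this question.

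Next, I would linearize the congruences. Since $a_h \in [0, r-2]$, the sum $\sum_{h\text{ at }v} a_h$ lies in an interval of length at most $n_v(r-2)$, where $n_v$ is the valence of $v$; hence each congruence at $v$ lifts to a finite, $r$-independent disjunction of integer equalities
\[
\sum_{h\text{ at }v} a_h = c_v + k_v(r-1), \quad k_v \in \{0, 1, \ldots, n_v\}.
\]
Splitting $S_{\Gamma,\mathbf{m}}$ as a finite sum over tuples $\mathbf{k} = (k_v)$, the polynomiality claim reduces to showing that each restricted sum
\[
\sum_{\mathbf{a} \in \Pi(r,\mathbf{k}) \cap \mathbb{Z}^{|\mathsf{E}(\Gamma)|}} \prod_h P_{m_h}(r, a_h)
\]
is polynomial in $r$, where $\Pi(r,\mathbf{k})$ is the parametric polytope cut out by $0 \leq a_e \leq r-2$ together with the above integer equalities. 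One checks moreover that $\sum_v k_v = |\mathsf{E}(\Gamma)|$ is forced by the total-degree identity, making one of the vertex equations redundant.

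Since $P_m(r,a)$ is polynomial in $(r,a)$, the summand is a polynomial in $(r, \{a_e\})$. The coefficient matrix of the vertex equalities is the signed incidence matrix of the non-loop subgraph of $\Gamma$, whose entries lie in $\{0, \pm 1\}$; thus eliminating dependent variables via $\pm 1$ integer operations expresses the sum as a polynomial-weighted lattice-point count in $h^1(\Gamma)$ free variables ranging in a parametric polytope whose defining inequalities are linear in $r$ with integer coefficients. By iterated Faulhaber summation, or equivalently Ehrhart theory for integer parametric polytopes, such a sum is polynomial in $r$ for all sufficiently large $r$. The main obstacle lies in this final step: ensuring the lattice-point sum is a genuine polynomial rather than merely a quasi-polynomial. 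This is guaranteed precisely because the signed incidence matrix has $\{0, \pm 1\}$ entries, so elimination does not introduce rational denominators and the free-variable polytope remains a lattice polytope.
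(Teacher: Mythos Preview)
Your approach is essentially the same as the paper's: both unwind the vertex congruences $\{\cdot\}_x$ into finitely many exact linear branches (your tuples $\mathbf{k}=(k_v)$ correspond precisely to the paper's choices of $b_v$), and then invoke a polynomial-summation result over the resulting constrained lattice. The paper outsources this last step to a black-box result, Proposition~A1 of \cite{JPPZ}, whereas you sketch it directly via total unimodularity of the signed incidence matrix and iterated Faulhaber summation; the two arguments are equivalent in content.

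One small point worth tightening: the claim that ``entries lie in $\{0,\pm 1\}$'' suffices to avoid quasi-polynomial behavior is not quite the right justification. What you need is that the signed incidence matrix of a graph is \emph{totally unimodular}, so that every square submatrix has determinant in $\{0,\pm 1\}$; this is what guarantees that after elimination the free-variable polytope has vertices that are integer-affine in $r$, and hence that Faulhaber summation yields a genuine polynomial rather than a quasi-polynomial. Once stated this way, your argument goes through.
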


\begin{proof}
The proof here follows closely Pixton's proof of polynomiality 
in \cite[Appendix]{JPPZ}.
We will use \cite[Proposition~A1]{JPPZ}, but with Pixton's $r$
replaced with $r-1$ (which we assume to be large enough). 
Let $\Gamma'$ be the graph obtained from $\Gamma$ by adding a vertex 
at the end of each leg and in the middle of each edge. 
Let $\mathsf{M}$ be the edge-vertex adjacency matrix of $\Gamma'$. 
The matrix $\mathsf{M}$ satisfies the assumptions of \cite[Proposition~A1]{JPPZ}. 
The vector $\mathsf{x}$ of  \cite[Proposition~A1]{JPPZ} 
assigns an integer $x_h$ to each edge of $\Gamma'$ or, in other words, 
to each half-edge $h$ of $\Gamma$. The vectors $\mathsf{a}$ and $\mathsf{b}$ 
of  \cite[Proposition~A1]{JPPZ}
assign an integer to each vertex of $\Gamma'$. 
The following table summarizes what these integers are 
for each vertex and what conditions the equation 
$$\mathsf{M} \mathsf{x} = \mathsf{a} + (r-1) \mathsf{b}$$ imposes. 
In fact, the latter conditions are equivalent to $\mathsf{x}$ being a weighting.

\bigskip

\noindent
\begingroup
\renewcommand*{\arraystretch}{2}
\begin{tabular}{|p{8em}|c|c|p{8em}|}
\hline
type of vertex of $\Gamma'$& $\mathsf{a}$ & $ \qquad \mathsf{b} \qquad$ & effect on $\mathsf{x}$\\
\hline
midpoint of edge $h$--$h'$ in $\Gamma$ & $r-2$ & $0$ & $x_h + x_{h'} = r-2$ \\
endpoint of leg $h$ in $\Gamma$ & $a_h$ & $0$ & $x_h = a_h$ \\
vertex $v$ of $\Gamma$ & $g_v-1+ \sum\limits_{\substack{h \mapsto v}}
%\substack{h \mbox{\scriptsize{} adjacent}\\ \mbox{\scriptsize to } v}} 
%\!\!\!\!
m_h$ & $b_v$ & TopFT condition mod $r-1$ at $v$\\
\hline
\end{tabular}
\endgroup

\bigskip

%We see that the equation $M \mathsf{x} = \mathsf{a} + (r-1) \mathsf{b}$ is equivalent to $\mathsf{x}$ being a weighting. 
The numbers $b_v$ in the table can take different values for different weightings.
However, 
for a given graph $\Gamma$ and a given choice of integers $m_h$, there are only {\em finitely} many 
 possible values $b_v$. Thus, the sum $S_{\Gamma, \mathbf{m}}$ over all 
weightings can be decomposed into a finite number of sums of the form of
\cite[Proposition~A1]{JPPZ}. Hence, by
\cite[Proposition~A1]{JPPZ},
 $S_{\Gamma, \mathbf{m}}$ is a polynomial in~$r$.
\end{proof}

\begin{lemma} \label{Lem:div}
The polynomial $S_{\Gamma, \mathbf{m}}$ is divisible by $(r-1)^{h^1(\Gamma)}$. 
\end{lemma}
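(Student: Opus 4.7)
The plan is to refine the lattice-point-counting argument of Lemma~\ref{Lem:poly} by identifying an $h^1(\Gamma)$-dimensional free action on the set of valid weightings and upgrading the cardinality count to a polynomial divisibility.

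First I would unfold the constant-term operation $\{\cdot\}_x$: since $\{x_v^k\}_x$ equals $1$ when $k \equiv 0 \pmod{r-1}$ and $0$ otherwise, $S_{\Gamma, \mathbf{m}}$ becomes $\sum_{\mathbf{a} \text{ valid}} p_* \prod_h P_{m_h}(r, a_h)$, where $\mathbf{a}$ is ``valid'' if it satisfies the edge constraints $a_h + a_{h'} = r-2$, the fixed leg weightings, and a TopFT congruence $\sum_{h \mapsto v}(m_h - a_h) \equiv 1 - g_v \pmod{r-1}$ at each vertex $v$. Using a spanning tree $T$ of $\Gamma$, the $h^1(\Gamma)$ non-tree edge weightings can be chosen freely in $\{0,\ldots,r-2\}$, and the tree edge weightings are then determined by the TopFT conditions (the last vertex's condition being automatic by summation). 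This produces $(r-1)^{h^1(\Gamma)}$ valid weightings for large $r$. Equivalently, the abelian group $G := (\mathbb{Z}/(r-1)\mathbb{Z})^{h^1(\Gamma)}$, indexed by a basis of $H_1(\Gamma,\mathbb{Z})$, acts freely on the valid weightings by ``cycle shifts'' preserving all constraints; the identity $P_m(r,0) = P_m(r,r-1)$ of Lemma~\ref{Lem:P} is precisely what makes this a well-defined action on the half-edge labels.

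To promote this to a polynomial divisibility statement for $S_{\Gamma, \mathbf{m}}$, I would proceed by induction on $h^1(\Gamma)$. The base case $h^1(\Gamma) = 0$ (trees) is trivial. For the inductive step, cut a non-tree edge $e$ with half-edges $h, h'$ to obtain a graph $\Gamma'$ with $h^1(\Gamma') = h^1(\Gamma)-1$ and two additional legs. Writing $S_{\Gamma, \mathbf{m}}$ as a sum over the weighting $a \in \{0,\ldots,r-2\}$ of $h$ (with $h'$ assigned $r-2-a$) of the reduced sum $S_{\Gamma',\mathbf{m}'}(a)$, the inductive hypothesis applied uniformly in $a$ yields divisibility by $(r-1)^{h^1(\Gamma)-1}$. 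It then remains to show the summation $\sum_{a=0}^{r-2}$ introduces one additional factor of $r-1$. This reduces to the model identity $\sum_{a=0}^{r-2} P_m(r,a)P_{m'}(r,r-2-a) \equiv 0 \pmod{r-1}$, which can be verified by observing that the sum is polynomial in $r$ (by Pixton's Proposition A1, as in Lemma~\ref{Lem:poly}) and vanishes at $r=1$ since the summation range is empty there. The explicit evaluation $\sum_{a=0}^{r-2} P_1(r,a) = (r-1)(r-2)/24$ is a sanity check.

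The main obstacle is controlling the inductive step: cutting $e$ modifies the TopFT conditions at the two endpoints of $e$ and introduces a dependence on $a$ in the new leg weights, so the polynomial $S_{\Gamma', \mathbf{m}'}(a)$ depends on $a$ both through the sum index and through the leg constraints at the cut. One must check that the vanishing-at-$r=1$ extrapolation still applies after summing over $a$, i.e.\ that the composed $r$-polynomial in $\sum_a S_{\Gamma',\mathbf{m}'}(a)$ still has an empty-sum-forced zero at $r=1$. This is where the combinatorics of Pixton's lattice-point matrix enters: the kernel of the constraint matrix has dimension exactly $h^1(\Gamma)$, and each independent cycle contributes an independent empty-range-vanishing at $r=1$. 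The identity $P_m(r,0) = P_m(r,r-1)$ of Lemma~\ref{Lem:P} is the mechanism that aligns the summation range with the $r-1$-periodic structure and makes the extraction clean.
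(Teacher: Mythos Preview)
Your inductive strategy is genuinely different from the paper's argument, and the core intuition---extracting one factor of $r-1$ per independent cycle via the empty-sum-at-$r=1$ trick---is sound in spirit. However, the inductive step as you have written it has a real gap.

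When you cut a non-tree edge $e$ and write $S_{\Gamma,\mathbf m}=\sum_{a=0}^{r-2}S_{\Gamma',\mathbf m'}(a)$, the two new legs on $\Gamma'$ carry weights $a$ and $r-2-a$. The second weight depends on $r$, so $S_{\Gamma',\mathbf m'}(a)$ is \emph{not} an instance of the quantity to which your inductive hypothesis applies: that hypothesis was formulated for $S_{\Gamma',\mathbf m'}$ with fixed integer leg weights. To make the induction close, you would need to strengthen the hypothesis to a statement of the form ``$S_{\Gamma',\mathbf m'}$ is a polynomial in $r$ and in the leg weights $a_i$ jointly, divisible by $(r-1)^{h^1(\Gamma')}$ as a polynomial in $r$ uniformly in the $a_i$.'' Establishing this joint polynomiality and uniform divisibility is itself nontrivial and essentially requires revisiting the machinery of Lemma~\ref{Lem:poly} with varying leg data. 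Your reduction to the ``model identity'' $\sum_{a=0}^{r-2}P_m(r,a)P_{m'}(r,r-2-a)\equiv 0\pmod{r-1}$ is only valid for a single loop on a single vertex; in general the summand $S_{\Gamma',\mathbf m'}(a)$ carries the full complexity of the remaining graph, and you have not justified why the quotient $S_{\Gamma',\mathbf m'}(a)/(r-1)^{h^1(\Gamma)-1}$ is polynomial in $(r,a)$ so that summing over $a$ and evaluating at $r=1$ makes sense.

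The paper sidesteps all of this by invoking Pixton's $p$-divisibility estimate from \cite[Appendix]{JPPZ} directly: for $p=r-1$ prime, a sum of a polynomial over an affine subspace of $\mathbb F_p^N$ cut out by $D$ linear equations is automatically divisible by $p^{N-D}$. One checks $N=|\mathsf E(\Gamma)|$, $D=|\mathsf V(\Gamma)|-1$ (the vertex congruences, minus one redundancy coming from the global constraint $\sum a_i=2g-2$), so $N-D=h^1(\Gamma)$. Since $S_{\Gamma,\mathbf m}$ is already known to be polynomial in $r$ by Lemma~\ref{Lem:poly}, divisibility by $(r-1)^{h^1(\Gamma)}$ at infinitely many primes $r-1$ forces divisibility as a polynomial. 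This extracts all $h^1(\Gamma)$ factors in one stroke and avoids the bookkeeping of your induction entirely.
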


\begin{proof}
Once again we follow Pixton's proof in \cite[Appendix]{JPPZ}. 
Let $\mathsf{Q}$ be a polynomial in $N$ variables with ($p$-integral) $\mathbb{Q}$-coefficients. 
According to \cite[Equation 33]{JPPZ}, the sum
\begin{equation}\label{fxxf2}
\sum_{0 \leq w_1, \dots, w_N \leq p} \mathsf{Q}(w_1, \dots, w_N)
\end{equation}
over the $N$-tuples satisfying $D$ integral linear equations mod $p$ is 
divisible by $p^{N-D}$ for every large enough prime $p$. 
In our case, the sum $S_{\Gamma, \mathbf{m}}$ has exactly the form 
\eqref{fxxf2} 
if we take $p=r-1$ to be prime. 

The number $N$ of variables is equal to the number $|\mathsf{E}(\Gamma)|$ of 
edges of the graph $\Gamma$. 
The number of mod~p linear equations is equal to $|\mathsf{V}(\Gamma)|-1$, 
where $V(\Gamma)$ is vertex set of $\Gamma$. 
Indeed, there is one mod $p$ condition per vertex, but one condition is redundant, 
since the sum of the conditions is equal to  
\begin{equation}\label{x99x}
2g-2-\sum a_i = 0 \mod p\, , 
\end{equation}
a condition that is automatically satisfied. 

We check the assertion that the sum of the vertex
condition yields \eqref{x99x} as follows. First, we sum
the vertex conditions
$$
\sum_{e\in \mathsf{E}(\Gamma)} w_e + (r-2-w_e) + \sum_{i=1}^n a_i - \sum_{h\in \mathsf{H}(\Gamma)} m_h = \sum_{v\in \mathsf{V}(\Gamma)} (g_v - 1)  \mod r-1\, .
$$
We rewrite the above as
$$
(r-2) |\mathsf{E}(\Gamma)| + \sum_{i=1}^n a_i - (g-1) = g-1-|\mathsf{E}(\Gamma)| \mod 
r-1\, ,
$$
or equivalently,
$$
(r-1) |\mathsf{E}(\Gamma)| + \sum_{i=1}^n a_i = 2g-2 \mod r-1\, , 
$$
which is exactly \eqref{x99x}.
%$$
%\sum a_i = 2g-2 \mod r-1,
%$$
%which is always satisfied.)

Thus, by \cite[Appendix A.3]{JPPZ},
 $S_{\Gamma, \mathbf{m}}$ is divisible by 
$$(r-1)^{N-D} = (r-1)^{|\mathsf{E}(\Gamma)|-|\mathsf{V}(\Gamma)|+1} = (r-1)^{h^1(\Gamma)}$$
 for $r-1$ prime and large. Since we already know that
 $S_{\Gamma, \mathbf{m}}$ is a polynomial in $r$ for $r$ large enough, 
we conclude the polynomial is divisible by $(r-1)^{h^1(\Gamma)}$.
\end{proof}

\pagebreak

\begin{appendix}

\section{Holomorphic differentials}

$$\hspace{-10pt} {\text{\em by F. Janda, R. Pandharipande, A. Pixton, D. Zvonkine}}$$

\vspace{10pt}
\noindent{A.1\, \bf{Moduli space.}}
Let $g$ and $n$ be in the stable range $2g-2+n>0$, and  
let $$(a_1,\ldots,a_n)\,, \ \ \ \ \sum_{i=1}^n a_i=2g-2$$
be a partition with  $a_i\geq 0$ for all $i$.
 
We define the {\em moduli space of holomorphic differentials} 
as the closed substack
$$\HH_{g}(a_1, \dots, a_n)=\Big\{\, [C, {\mathsf{p}_1}, \ldots, 
{\mathsf{p}_n}]\in \cM_{g,n}\ \Big| \ {\mathcal{O}}_C\Big(\sum_{i=1}^n 
a_i {\mathsf{p}}_i\Big)=\omega_C \, \Big\}\, \subset \cM_{g,n}\, . $$
Since $\HH_{g}(a_1, \dots, a_n)$ is the locus of points
$$[C, {\mathsf{p}}_1, \ldots, {\mathsf{p}}_n]\in \cM_{g,n}$$ for which the evaluation map
$$H^0(C, \omega_C)\rightarrow H^0\bigl(C, \omega_{C | 
a_1{\mathsf{p}_1}+\cdots+a_n {\mathsf{p}_n}}\bigr)$$ is not injective, every component of $\HH_{g}(a_1, \dots, a_n)$ has dimension at least $2g-2+n$
in $\cM_{g,n}$ by degeneracy loci considerations \cite{Fulton}.
Polishchuk~\cite{Pol} has shown
that $\HH_{g}(a_1, \dots, a_n)$ is a \emph{nonsingular} substack of $\cM_{g,n}$ of
pure dimension $2g-2+n$. Hence, the Zariski closure
$$\HH_{g}(a_1, \dots, a_n) \subset \overline{\HH}_{g}(a_1, \dots, a_n) \subset \oM_{g,n}$$
defines a cycle class
$$[\overline{\HH}_{g}(a_1, \dots, a_n)] \in H^{2(g-1)}(\oM_{g,n})\, .$$
Our goal in the Appendix is to relate $[\overline{\HH}_{g}(a_1, \dots, a_n)]$
to a certain limit of Witten's $r$-spin classes. 

A compact  moduli space of twisted canonical divisors which
includes the moduli of holomorphic differentials $\HH_g(a_1,\ldots,a_n)$
is defined in \cite{FarP}.
A detailed study of the points of the closure
$\overline{\HH}_{g,n}(a_1, \dots, a_n)$
can be found in \cite{BCGGM}.
In \cite[Appendix]{FarP}, a conjecture determining 
$$[\overline{\HH}_{g}(a_1, \dots, a_n)] \in A^{g-1}(\oM_{g,n})\, $$
in terms the fundamental classes of the moduli spaces
of twisted canonical divisors and 
a formula of Pixton is presented.

The relationship
of the conjecture of \cite[Appendix]{FarP} to our conjecture
here is a direction for future study. 

\vspace{10pt}
\noindent{A.2\, \bf{The limit $r=0$.}}
Let $(a_1,\ldots,a_n)$ be a partition of $2g-2$
with non-negative parts (as in A.1). For 
$$r-2\geq \text{max}\{a_1,\ldots,a_n\}\, ,$$
Witten's $r$-spin class 
$W^r_{g,n}(a_1, \dots, a_n)$
is well-defined and of degree {\em independent} of $r$,
$${\mathsf{D}}^r_{g,n}(a_1,\ldots,a_n) = \frac{(r-2)(g-1) + \sum_{i=1}^n a_i}{r} =g-1\, .$$
By Theorem~\ref{Thm:poly},
 after scaling by $r^{g-1}$, 
%the rescaled Witten class
$$
\mathcal{W}_{g; a_1, \dots, a_n}(r)=r^{g-1}\cdot W^r_{g,n}(a_1, \dots, a_n)\in RH^{g-1}(\oM_{g,n})
$$
is a {\em polynomial} in $r$ for all sufficiently large $r$.

\vspace{+10pt}
\begin{conjecture} \label{Conj:Hol}
We have
$$
(-1)^g \mathcal{W}_{g; a_1, \dots, a_n}(0) = [\overline{\HH}_{g}(a_1, \dots, a_n)]\in H^{2(g-1)}(\oM_{g,n}).
$$
\end{conjecture}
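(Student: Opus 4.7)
The plan is to combine the explicit stable graph formula of Proposition~\ref{Prop:SecondShift} with the polynomiality-in-$r$ analysis underlying the proof of Theorem~\ref{Thm:poly}. Starting from the rewriting of $\mathcal{W}_{g;a_1,\ldots,a_n}(r)$ as a sum of the polynomials $S_{\Gamma,\mathbf{m}}$ weighted by $(r-1)^{1-h^1(\Gamma)}$, I would evaluate at $r=0$ using the closed form of $P_m(0,a)$ in terms of Bernoulli polynomials (Proposition~\ref{Prop:Pcstterm}). The prefactor becomes $(-1)^{1-h^1(\Gamma)}$, and the resulting expression is a concrete tautological class written as a sum over stable graphs decorated by Bernoulli polynomials on legs, $\kappa$-legs, and edges.

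The second step is to match this expression against Pixton's formula in \cite[Appendix]{FarP} for the class of the moduli of twisted canonical divisors. That formula is also a sum over stable graphs with Bernoulli-polynomial decorations, and $\overline{\HH}_g(a_1,\ldots,a_n)$ is an irreducible component of the twisted canonical divisor locus with all zero multiplicities non-negative. The main obstacle is a precise term-by-term identification: one must show that graphs contributing to meromorphic boundary strata in \cite[Appendix]{FarP} either do not appear at $r=0$ in our formula or cancel, and that graphs corresponding to the holomorphic locus contribute with matching Bernoulli coefficients. This likely requires reorganizing the sums via the divisibility by $r-1$ of each $S_{\Gamma,\mathbf{m}}$ established in Lemma~\ref{Lem:div}, so that the sign $(-1)^g$ is absorbed naturally into the graph sum.

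As a complementary verification, I would test the conjecture inductively through the CohFT splitting axiom: the restriction of the $r$-spin CohFT to boundary divisors is expressible through lower-genus $r$-spin classes, and polynomiality in $r$ passes through this restriction; on the geometric side, \cite{BCGGM} describes the boundary of $\overline{\HH}_g(a_1,\ldots,a_n)$ in terms of analogous moduli of lower-genus components. A match of boundary restrictions, combined with base-case checks in small genus and on $\oM_{0,n}$, would reduce the conjecture to controlling the interior contribution. Here the true difficulty emerges: classes with vanishing boundary restrictions form a nontrivial subspace of $RH^{g-1}(\oM_{g,n})$, so a final identification of the interior piece must come from an additional geometric ingredient, for example a direct comparison with a degeneracy-locus representative obtained from the evaluation map $H^0(C,\omega_C) \to H^0(C,\omega_C|_{\sum a_i x_i})$ used by Polishchuk~\cite{Pol} to analyze $\HH_g(a_1,\ldots,a_n)$ on the interior.
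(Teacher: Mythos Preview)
The statement you are attempting to prove is labeled \emph{Conjecture}~\ref{Conj:Hol} in the paper, and the paper does \emph{not} prove it. What the paper provides is evidence: explicit verification in genus~1 and in the two genus~2 cases $(a_1)=(2)$ and $(a_1,a_2)=(1,1)$, together with a check (in the final Remark of Section~A.4) that the restriction of the conjectural formula to $\cM_{g,n}$ agrees with the Thom--Porteous class of the degeneracy locus of the evaluation map $H^0(C,\omega_C)\to H^0(C,\omega_C|_{\sum a_i x_i})$. There is no proof to compare your proposal against.

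Your proposal is a research plan, not a proof, and you have correctly identified the genuine obstacles yourself. The first step (rewriting $(-1)^g\mathcal{W}_{g;a_1,\ldots,a_n}(0)$ via Propositions~\ref{Prop:SecondShift} and~\ref{Prop:Pcstterm}) is exactly what the paper does in Proposition~\ref{Prop:Hol}, and the interior check via Thom--Porteous is what the paper records in its final Remark. But the crucial second step --- matching the resulting stable-graph expression with Pixton's formula from \cite[Appendix]{FarP}, or otherwise identifying the boundary contributions with $[\overline{\HH}_g(a_1,\ldots,a_n)]$ --- is not carried out anywhere, and the paper explicitly says that the relationship between the two conjectures ``is a direction for future study.'' Your own remark that classes with vanishing boundary restriction form a nontrivial subspace of $RH^{g-1}(\oM_{g,n})$ is precisely why the inductive/splitting approach cannot close the argument. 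In short, you have reproduced the paper's partial evidence and correctly diagnosed why a full proof remains open; no further proof exists in the paper to review.
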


\vspace{8pt}
If the polynomiality of Theorem \ref{Thm:poly} were to hold in Chow
(which we expect), then Conjecture
\ref{Conj:Hol} could also be formulated in $A^{g-1}(\overline{\cM}_{g,n})$.
By Conjecture \ref{Conj:Hol}, 
$[\overline{\HH}_{g}(a_1, \dots, a_n)]$ is a tautological class --  a claim
which has been proven{\footnote{In fact, Sauvaget
proves $[\overline{\HH}_{g}(a_1, \dots, a_n)]$ is tautological in Chow.}}
by A. Sauvaget \cite{Sauv}.

\pagebreak
\vspace{10pt}
\noindent{A.3\, \bf{Evidence.}}

\paragraph{Genus~1.}
Witten's class $W^r_{1,n}(0, \dots, 0)$ has degree~0. To evaluate the class,
the topological field theory is enough 
(we do not need the $R$-matrix). 
A simple computation shows that Witten's class here is equal to $r-1$. 
Indeed,
\begin{eqnarray*}
W^r_{1,n}(0, \dots, 0) &=& \omega_{1,n}(0, \dots, 0)\\
&=& \sum_{a=0}^{r-2} \omega_{0,n+2}(0, \dots, 0, a,r-2-a)\\
&=& \sum_{a=0}^{r-2} 1\ =\ r-1\, .
\end{eqnarray*}
Thus $\mathcal{W}_{1; 0, \dots, 0}(r) = r-1$, 
$\mathcal{W}_{1;0, \dots, 0}(0) = -1$. 
According to the conjecture we have
$$
(-1)^g \cdot (-1) = 1 =  [\overline{\HH}_{1}(0, \dots, 0)]\in RH^0(\oM_{1,n}),
$$
which is, indeed, true.

\paragraph{Genus 2, $n=1$, $a_1=2$.}

The tautological space $RH^1(\oM_{2,1})$ is spanned by the classes
$\psi_1$, $\delta_{\rm sep}$, and $\delta_{\rm nonsep}$, where the indices sep and nonsep refer to the boundary divisors with a separating or a nonseparating node. The class $\kappa_1$ may be expressed as 
\begin{equation}\label{d122}
\kappa_1 = \psi_1 + \frac75 \delta_{\rm sep} + \frac15 \delta_{\rm nonsep}.
\end{equation}
Equation \eqref{d122} is obtained by  pulling back the relation on 
$\oM_2$ expressing $\kappa_1$ in terms of boundary divisors. 

Theorem~\ref{Thm:SecondShift} gives an expression for Witten's class as a linear combination of classes $\kappa_1$, $\psi_1$, $\delta_{\rm sep}$ and $\delta_{\rm nonsep}$ with the following coefficients:
\begingroup
\renewcommand*{\arraystretch}{2}
$$
\begin{array}{lccc}
\mbox{Class} & R\mbox{-matrix} & \mbox{TopFT} & \mbox{Coefficient} \\
\kappa_1  & -\frac{P_1(r,0)}{r(r-1)} &  (r-1)^2 &\ \frac{(r-1)(r-2)(2r-1)}{24r}\\
\psi_1 & \frac{P_1(r,2)}{r(r-1)} &  (r-1)^2 &\ -\frac{(r-1)(2r^2-29r+74)}{24r}\\
\delta_{\rm sep} & -\frac{P_1(r,1)}{r(r-1)} &  (r-1)^2 &\ \frac{(r-1)(r-2)(2r-13)}{24r} \\
\delta_{\rm nonsep} &- \sum\limits_{a=0}^{r-2} \frac{P_1(r,a)}{r(r-1)} &  r-1 &\ -\frac{(r-1)(r-2)}{24r}
\end{array}
$$
\endgroup

After multiplying Witten's class
 by $(-1)^g r^{g-1} = r$ and extracting the constant term in~$r$, we obtain
$$
\frac1{12} (-\kappa_1 + 37 \psi_1 - 13 \delta_{\rm sep} - \delta_{\rm nonsep})\, .
$$
We remove $\kappa_1$ using equation \eqref{d122}. Conjecture
\ref{Conj:Hol} predicts
$$
%\frac1{12} \left(36 \psi_1 - \frac{72}5 \delta_{\rm sep} - \frac65 \delta_{\rm nonsep}\right) = 
[\overline{\HH}_2(2)] = 
3 \psi_1 - \frac65 \delta_{\rm sep} - \frac1{10} \delta_{\rm nonsep} \ \in
RH^1(\overline{\mathcal{M}}_{2,1})\, .
$$
The result coincides with the well-known formula for the locus of the 
Weierstrass points, see \cite[Lemma~5]{bp}.

\paragraph{Genus~2, $n=2$, $a_1=a_2=1$.}
The tautological space $RH^1(\oM_{2,2})$ is spanned by six classes
$\psi_1$, $\psi_2$, $\alpha$, $\beta$, $\gamma$ and $\delta_{\rm nonsep}$ where
\begin{itemize}
\item
$\alpha$ is the locus of curves with a rational component carrying both markings and a genus 2 component,
\item
$\beta$ is the locus of curves with two elliptic components carrying one marking each,
\item
$\gamma$ is the locus of curves with two elliptic components one of which carries both markings and the other one no markings,
\item
$\delta_{\rm nonsep}$ is the locus of curves with a nonseparating node.
\end{itemize}
The class $\kappa_1$ is expressed as 
$$
\kappa_1 = \psi_1 + \psi_2 + \alpha +\frac75 \beta + \frac75 \gamma + \frac15 \delta_{\rm nonsep}\, 
$$
by pulling back the boundary relation on $\oM_2$. 

Theorem~\ref{Thm:SecondShift} gives an expression for Witten's class as a linear combination of classes $\kappa_1$, $\psi_1$, $\psi_2$, $\alpha$, $\beta$, $\gamma$ and $\delta_{\rm nonsep}$ with the following coefficients:
\begingroup
\renewcommand*{\arraystretch}{2}
$$
\begin{array}{lccc}
\mbox{Class} & R\mbox{-matrix} & \mbox{TopFT} & \mbox{Coefficient} \\
\kappa_1  & -\frac{P_1(r,0)}{r(r-1)} &  (r-1)^2 &\ \ \frac{(r-1)(r-2)(2r-1)}{24r}\\
\psi_1 & \frac{P_1(r,1)}{r(r-1)} &  (r-1)^2 &\ \ -\frac{(r-1)(r-2)(2r-13)}{24r}\\
\psi_2 & \frac{P_1(r,1)}{r(r-1)} &  (r-1)^2 &\ \ -\frac{(r-1)(r-2)(2r-13)}{24r}\\
\alpha & -\frac{P_1(r,2)}{r(r-1)} &  (r-1)^2 &\ \ \frac{(r-1)(2r^2-29r+74)}{24r} \\
\beta & -\frac{P_1(r,0)}{r(r-1)} &  (r-1)^2 &\ \ \frac{(r-1)(r-2)(2r-1)}{24r} \\
\gamma & -\frac{P_1(r,1)}{r(r-1)} &  (r-1)^2 &\ \ \frac{(r-1)(r-2)(2r-13)}{24r} \\
\delta_{\rm nonsep} &- \sum\limits_{a=0}^{r-2} \frac{P_1(r,a)}{r(r-1)} &  r-1 & \ -\frac{(r-1)(r-2)}{24r}
\end{array}
$$
\endgroup

After multiplying Witten's class by $(-1)^g r^{g-1} = r$ and extracting the constant term in~$r$, we obtain
$$
\frac1{12} (-\kappa_1 +13 \psi_1 +13 \psi_2  -
37 \alpha - \beta - 13 \gamma - \delta_{\rm nonsep})\, .
$$
We remove $\kappa_1$ using equation \eqref{d122}. Conjecture
\ref{Conj:Hol} predicts
%
%$$
%\frac1{12} \left(12 \psi_1 + 12 \psi_2 - 36 \alpha - \frac{12}5 \beta - \frac{7%2}5 \gamma - \frac65 \delta_{\rm nonsep}\right) 
%$$
$$ [\overline{\HH}_2(1,1)]
=\psi_1 +\psi_2 - 3 \alpha - \frac15 \beta - \frac65 \gamma - \frac1{10} \delta_{\rm nonsep}\ \in RH^1(\overline{\mathcal{M}}_{2,2})\, .
$$
%This coincides with the expression for the locus of genus~2 curves with two conjugate points computed in~\cite{bp}, Lemma~6.
The result coincides with the well-known formula for the locus of genus
2 curves with a pair of conjugate points, see \cite[Lemma~6]{bp}.

\vspace{10pt}
\noindent{A.4\, \bf{The constant term.}}
%Recall that $\mathcal{W}$ is the rescaled Witten class:
%$$
%\mathcal{W}_{g;a_1, \dots, a_n}(r) = r^{g-1} W_g(a_1, \dots, a_n).
%$$
We will now present a more explicit approach to the constant term 
$$
(-1)^g \mathcal{W}_{g;a_1, \dots, a_n}(0) \in RH^{g-1}(\overline{\mathcal{M}}_{g,n})
$$
which, according to Conjecture~\ref{Conj:Hol}, equals  
$[\overline{\HH}_{g}(a_1, \dots, a_n)]$. 

We will use the shift along $e_1$ studied
in Section \ref{pfpfpf}. The corresponding
$R$-matrix involves a sequence of polynomials $P_m(r,a)$ for which we
know no closed formula. However, in Proposition \ref{Prop:Pcstterm} below,
we obtain a closed formula for the 
polynomials $P_m(0,a)$.

Let $B_m(x)$ be the Bernoulli polynomials defined by 
$$
\frac{t e^{xt}}{e^t-1}= \sum_{m=0}^\infty B_m(x) \frac{t^m}{m!}\, .
$$
Let $P_m(r,a)$ be the polynomials defined by equation~\eqref{Eq:DefP}. 

\begin{proposition} \label{Prop:Pcstterm}
We have
$$
\sum_{m \geq 0} z^m P_m(0,a) = \exp \left(  - \sum_{m \geq 1} z^m \frac{B_{m+1}(a+1)}{m(m+1)} \right).
$$
\end{proposition}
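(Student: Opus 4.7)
Let $F(z,a) := \sum_{m\ge 0} z^m P_m(0,a)$ and $\widetilde F(z,a) := \exp\bigl(-\sum_{m\ge 1} z^m B_{m+1}(a+1)/(m(m+1))\bigr)$. My strategy is to show $F = \widetilde F$ by establishing the same multiplicative recursion in $a$ for both sides and then matching the value at $a=0$.

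First, setting $r=0$ in the recursion \eqref{Eq:P1} gives $P_m(0,a) - P_m(0,a-1) = -a\,P_{m-1}(0,a-1)$, equivalently
\[
F(z,a) = (1-az)\,F(z,a-1).
\]
Using the classical identity $B_{m+1}(a+1) - B_{m+1}(a) = (m+1)\,a^m$, one computes
\[
\log\widetilde F(z,a) - \log\widetilde F(z,a-1) = -\sum_{m\ge 1} a^m z^m / m = \log(1-az),
\]
so $\widetilde F$ satisfies the same recursion. Iterating, both $F(z,a)$ and $\widetilde F(z,a)$ equal $(\text{value at }a=0) \cdot \prod_{k=1}^a(1-kz)$, valid as identities of formal power series in $z$ whose $z^m$-coefficient is a polynomial in $a$ (via Faulhaber's formula $\sum_{k=1}^a k^m = (B_{m+1}(a+1) - B_{m+1}(1))/(m+1)$). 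Consequently, the ratio $F(z,a)/\widetilde F(z,a)$ is a power series $C(z)$ independent of $a$.

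The remaining step, which is the crux of the proof, is to verify $C(z) = 1$, i.e.\ that $P_m(0,0) = [z^m]\exp\bigl(-\sum_{k\ge 1}z^k B_{k+1}/(k(k+1))\bigr)$ for every $m$. For this I would return to the explicit definition \eqref{Eq:DefP} at $a=0$,
\[
P_m(r,0) = -\frac{1}{4mr(r-1)} \sum_{b=1}^{r-2}(r-1-b)(2mr-b)(2mr-r-2b)\,P_{m-1}(r,b-1),
\]
interpret the sum over $b$ by Bernoulli--Faulhaber summation as a polynomial identity in $r$, and extract the constant term after the $r(r-1)$ cancellation guaranteed by Lemma~\ref{Lem:P}. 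Proceeding by induction on $m$, the inductive hypothesis puts $P_{m-1}(0,\cdot)$ in the exponential-Bernoulli form, and the identity at $r=0$ reduces to a Bernoulli-number manipulation flowing from the standard generating function $z/(e^z-1)$ together with the symmetries $B_n(1) = B_n(0)$ (for $n \ge 2$) and $B_n(x+1) - B_n(x) = n x^{n-1}$. This final Faulhaber bookkeeping is the main obstacle: although each ingredient is elementary, matching the $r^0$ coefficient at each step requires careful tracking of boundary contributions from $b=-1$ arising in the Bernoulli-extended summation convention.
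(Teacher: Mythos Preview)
Your first step is exactly the paper's: both sides satisfy the finite-difference recursion in $a$ (the paper writes it as $Q_m(a)-Q_m(a-1)=-a\,Q_{m-1}(a-1)$, the $r=0$ specialization of \eqref{Eq:P1}), so the two generating series agree up to a factor $C(z)$ independent of~$a$.

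For pinning down $C(z)$, the paper does \emph{not} attempt your direct Faulhaber extraction from \eqref{Eq:DefP}. Instead it invokes the $r=0$ specialization of \eqref{Eq:P2}, namely $Q_m(-1)=Q_m(0)$, verifies this on the exponential side via $B_{m+1}(0)=B_{m+1}(1)$, and asserts that \eqref{Eq:Q1}, \eqref{Eq:Q2} and $Q_0=1$ together determine the $Q_m$ uniquely. You should notice, however, that plugging $a=0$ into \eqref{Eq:Q1} already gives $Q_m(0)-Q_m(-1)=-0\cdot Q_{m-1}(-1)=0$; so \eqref{Eq:Q2} is a consequence of \eqref{Eq:Q1} and does not by itself constrain $C(z)$. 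In other words, the paper's argument as written leaves open the very point you flag as the crux.

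Your proposed route---compute $P_m(0,0)$ from \eqref{Eq:DefP} by interpreting $\sum_{b=1}^{r-2}$ as a polynomial in $r$ via Faulhaber, dividing out the guaranteed factor $r(r-1)$, and evaluating at $r=0$ inductively---is a legitimate strategy, but you have only sketched it: the induction step and the resulting Bernoulli identity are not actually carried out. Until that computation is completed (or an alternative genuinely independent constraint is exhibited, for instance by differentiating \eqref{Eq:P2} in $r$ at $r=0$, which brings in $\partial_a P_m(0,-1)$ rather than just $P_m(0,-1)$), the proof remains incomplete.
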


\begin{proof} Define polynomials $Q_m(a)$ by 
\begin{equation}\label{llw}
\sum_{m \geq 0} z^m Q_m(a) = \exp \left(  - \sum_{m \geq 1} z^m \frac{B_{m+1}(a+1)}{m(m+1)} \right)\, .
\end{equation}
We will show that the polynomials $Q_m(a)$ satisfy the mod~$r$ reduction of equations~\eqref{Eq:P1} and~\eqref{Eq:P2} of Lemma~\ref{Lem:P},
\begin{equation} \label{Eq:Q1}
Q_m(a) - Q_m(a-1) = -a Q_{m-1}(a-1)\, ,
\end{equation}
\begin{equation} \label{Eq:Q2}
Q_m(-1) = Q_m(0)\, .
\end{equation}
Together with the initial condition $Q_0=1$,
these properties determine the polynomials $Q_m$ uniquely and immediately imply  $$Q_m(a) = P_m(0,a)\,.$$

\noindent $\bullet$
 Property~\eqref{Eq:Q2} is implied by the simple equality 
$$B_{m+1}(0) = B_{m+1}(1)$$ for every~$m\geq 1$ and definition
\eqref{llw}. 

\vspace{8pt}
\noindent $\bullet$ Property~\eqref{Eq:Q1} follows from
a generating function calculation:
$$
\sum_{m \geq 0} z^m Q_m(a) - \sum_{m \geq 0} z^m Q_m(a-1)
= -a z \sum_{m \geq 0} z^m Q_m(a-1)\, 
$$
which is equivalent to
$$
\sum_{m \geq 0} z^m Q_m(a) 
= (1-a z) \sum_{m \geq 0} z^m Q_m(a-1)\, .
$$
After taking the logarithm
$$
\log \sum_{m \geq 0} z^m Q_m(a) 
= \log (1-a z) + \log \sum_{m \geq 0} z^m Q_m(a-1)\, ,
$$
we must show
$$
\sum_{m \geq 1} z^m \frac{B_{m+1}(a+1)}{m(m+1)}
= \sum_{m \geq 1} z^m \frac{a^m}{m}  + \sum_{m \geq 1} z^m 
\frac{B_{m+1}(a)}{m(m+1)}\, .
$$
The latter follows from
$$
B_{m+1}(a+1) = (m+1) z^m + B_{m+1}(a)  
$$
which is a well-known property of Bernoulli polynomials.
\end{proof}

Using Proposition \ref{Prop:Pcstterm},
we can derive a more explicit formula for 
the constant term
$$(-1)^g \mathcal{W}_{g,(a_1, \dots, a_n)} (0)$$ as a sum over stable graphs.
% with weightings (following the notation of Definition~\ref{Def:weighting}). 
We will require here only stable graphs with exactly $n$ legs.
We will replace the push-forward of the $\psi$ classes on the $\kappa$-legs
 with an equivalent vertex factor involving $\kappa$ classes. The equivalence
requires the following well-known equality. If
$$f  = \sum_{m \geq 1} c_m z^m$$ is a power series without constant term and 
$F = \exp(f)$ is the exponential, then
\begin{equation} \label{Eq:logexp}
\sum_{k \geq 0} p_{k*} \left[ \prod_{i=n+1}^{n+k} \psi_i (1-F(\psi_i))\right]
= \exp\left(-\sum_{m\geq 1} c_{m} \kappa_m\right),
\end{equation}
where $p_k :\oM_{g,n+k} \to \oM_{g,n}$ is the forgetful map.

As before, to every vertex $v$ of a stable graph we assign a formal variable $x_v$ satisfying $x_v^{r-1}=1$. For a polynomial $\Pi$ in variables $x_v$, we will denote by $\{ \Pi \}_x$ the term of degree~0 in all variables~$x_v$.

Given a stable graph,  to each edge $e$ we assign the {\em edge factor}{\footnote{Division
 by $\psi'+\psi''$ is only possible for the constant term in $r$ of the edge factor.}}
%$$
%\Delta(e) = \frac1{x^a y^b} \frac{1- \exp\left[- \sum\limits_{m \geq 1} 
%\frac{B_{m+1}(a+1) \cdot (x\psi')^m + B_{m+1}(b+1) \cdot (y\psi'')^m }{m(m+1)} %\right]}{\psi' + \psi''}.
%$$
$$
\widetilde{\Delta}(e) = \sum_{a+b=r-2}\frac1{x^a y^b} \frac{1- \exp\left[- \sum\limits_{m \geq 1} 
\frac{B_{m+1}(a+1) \cdot (x\psi')^m + B_{m+1}(b+1) \cdot (y\psi'')^m }{m(m+1)} 
\right]}{\psi' + \psi''}.
$$
Here, $a$ and $b$ are non-negative integers, $\psi'$ and $\psi''$ are
the cotangent line classes corresponding to the half-edges of $e$, 
and $x,y$ are the vertex variables corresponding to the vertices adjacent to the edge (if the edge is a loop, then $x=y$).

Furthermore, to each leg $i$ we assign the {\em leg factor}
$$
L(i) = \frac1{x_v^{a_i}} \exp\left(- \sum_{m \geq 1} \frac{B_{m+1}(a_i+1)}{m(m+1)}  (x_v\psi_i)^m\right),
$$
where $\psi_i$ is the cotangent line class associated to the leg,
and $x_v$ is the vertex variable of the vertex
to which the leg is attached.

Finally, we assign to each vertex $v$ the {\em vertex factor}
$$
 \kappa(v) = x_v^{g-1} \exp \left[\sum_{m \geq 1} \frac{B_{m+1}(1)}{m(m+1)} x_v^m \kappa_m\right].
$$
Here, $x_v$ is the variable of the vertex and $\kappa_m$ is the $\kappa$ class on the moduli space $\oM_v$ corresponding to~$v$.

\begin{proposition} \label{Prop:Hol}
Let $a_1, \dots a_n\in \mathbb{Z}_{\geq 0}$ satisfy  $$\sum_{i=1}^n a_n=2g-2\, .$$
 The coefficient 
$(-1)^g \mathcal{W}_{g; a_1, \dots, a_n}(0)$ of the rescaled Witten 
class $$(-1)^g r^{g-1} \cdot W^r_g(a_1, \dots, a_n)$$ is given by 
the $r^0$ coefficient of 
the degree $g-1$ part of
$$
\sum_{\Gamma\in \mathsf{G}_{g,n}} \frac{(-1)^{g-1+h^1(\Gamma)}}{|\Aut(\Gamma)|} p_*
\left\{ \prod_e \widetilde{\Delta}(e)
\prod_{i=1}^n L(i) \prod_v \kappa(v)  \right\}_x,
$$
where $p: \oM_\Gamma \to \oM_{g,n}$ is the natural boundary map.
\end{proposition}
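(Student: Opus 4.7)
The plan is to derive the formula by extracting the $r^0$ coefficient of the graph-sum formula of Proposition~\ref{Prop:SecondShift}, applying Proposition~\ref{Prop:Pcstterm} to collapse the polynomials $P_m(0,a)$ into exponentials of Bernoulli series, and then invoking identity~\eqref{Eq:logexp} to resum the $\kappa$-leg contributions into exponentials of $\kappa$ classes at each vertex.

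First I would extract the constant term in $r$ of the expression in Proposition~\ref{Prop:SecondShift}, which is legitimate by Theorem~\ref{Thm:poly}. The prefactor $(r-1)^{1-h^1(\Gamma)}/|\Aut(\Gamma)|$ specializes to $(-1)^{1-h^1(\Gamma)}/|\Aut(\Gamma)|$ at $r=0$, and combined with the external $(-1)^g$ from the rescaling produces the claimed sign $(-1)^{g-1+h^1(\Gamma)}/|\Aut(\Gamma)|$, using the parity identity $g+1\equiv g-1\pmod 2$.

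Next, by Proposition~\ref{Prop:Pcstterm}, at $r=0$ the product $\bigl[\sum_m P_m(0,a)(x\psi')^m\bigr]\bigl[\sum_\ell P_\ell(0,b)(y\psi'')^\ell\bigr]$ in the numerator of $\Delta(e)$ factors into a single exponential of Bernoulli series, so after summing over the weighting pairs $(a,b)$ with $a+b=r-2$ the edge factor $\Delta(e)$ of Proposition~\ref{Prop:SecondShift} specializes to $\widetilde{\Delta}(e)$, and the leg factor $L(i)$ of Proposition~\ref{Prop:SecondShift} is transformed analogously into the leg factor of Proposition~\ref{Prop:Hol}. For the $\kappa$-legs attached at a vertex $v$, the factor $K(i)$ at $r=0$ becomes $\psi_i(1-F_v(\psi_i))$ with $F_v(z) = \exp\bigl(-\sum_{m\ge 1}\tfrac{B_{m+1}(1)}{m(m+1)}(x_vz)^m\bigr)$; applying identity~\eqref{Eq:logexp} with $c_m = -\tfrac{B_{m+1}(1)}{m(m+1)}x_v^m$ resums the contributions of any number of $\kappa$-legs at $v$ into $\exp\bigl(\sum_{m\ge 1} \tfrac{B_{m+1}(1)}{m(m+1)}x_v^m\kappa_m\bigr)$, exactly the exponential factor of $\kappa(v)$. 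The remaining $x_v^{g_v-1}$ is inherited unchanged from the original vertex weight in Proposition~\ref{Prop:SecondShift}.

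The main obstacle is the division by $\prod_{e \in \mathsf{E}(\Gamma)}(\psi'_e+\psi''_e)$: individual edge numerators need not be divisible in the tautological ring, and the division becomes legitimate only after the full sum is assembled. Following the strategy of Section~\ref{polypoly}, I would carry out all the above manipulations in the un-quotiented strata algebra, where division by each $\psi'_e+\psi''_e$ is uniquely defined and preserves polynomiality in $r$; the constant term in $r$ then inherits the divisibility. A secondary bookkeeping point is that the relation $x_v^{r-1}=1$ of Proposition~\ref{Prop:SecondShift} becomes vacuous at $r=0$, so the extraction $\{\cdot\}_x$ in Proposition~\ref{Prop:Hol} is simply the $x_v^0$-coefficient with no residual cyclic identification; checking that the reorganization of $\sum_{k}\sum_{\Gamma\in\mathsf{G}_{g,n+k}}$ into $\sum_{\Gamma\in\mathsf{G}_{g,n}}$ together with distributions of $\kappa$-legs matches the automorphism factors demanded by \eqref{Eq:logexp} is a final combinatorial verification.
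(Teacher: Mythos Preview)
Your proposal is correct and follows essentially the same route as the paper's own proof: start from the graph-sum of Proposition~\ref{Prop:SecondShift}, replace $P_m(r,a)$ by $P_m(0,a)$ via Proposition~\ref{Prop:Pcstterm}, collapse the $\kappa$-legs into the vertex exponential using~\eqref{Eq:logexp}, and track the sign $(r-1)^{1-h^1(\Gamma)}\to(-1)^{1-h^1(\Gamma)}$ together with the external $(-1)^g$.

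One small correction to your bookkeeping aside: the relation $x_v^{r-1}=1$ does \emph{not} become vacuous in Proposition~\ref{Prop:Hol}. The formula there is still genuinely $r$-dependent --- through the edge summation $a+b=r-2$ and through the cyclic identification $x_v^{r-1}=1$ --- and the statement asks for its $r^0$ coefficient, not its value at $r=0$. The point of the ``mod~$r$ reduction'' is that replacing $P_m(r,a)$ by $P_m(0,a)$ and $(r-1)^{1-h^1}$ by $(-1)^{1-h^1}$ alters the degree $g-1$ part of the graph sum only by terms divisible by~$r$ (the polynomiality and $(r-1)^{h^1}$-divisibility of Lemmas~\ref{Lem:poly} and~\ref{Lem:div} continue to hold with $P_m(0,a)$ in place of $P_m(r,a)$, since those arguments depend only on the polynomial shape in the weights, not on the specific coefficients). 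Hence the $r^0$ coefficients of the two graph sums coincide. This does not affect your main line of argument, which is sound.
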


\begin{proof}
The result is obtained by a mod $r$ reduction of the formula
 of Proposition~\ref{Prop:SecondShift} for the rescaled Witten class. 
The polynomials $P_m(r,a)$ are replaced by the expression for $$Q_m(a) = P_m(0,a)$$ 
given in Proposition~\ref{Prop:Pcstterm}. 
The sum over weightings $\mathbf{a}$ has been moved to
the sum in the definition of the edge term $\widetilde{\Delta}(e)$.
The push-forward of the $\kappa$-factors associated to
 the legs marked by $n+1 \leq i \leq n+k$ adjacent to a vertex~$v$ is collected in the vertex factor $\kappa(v)$. 
The factor $r-1$ becomes $-1$. Finally, we multiply the resulting expression by the global factor of $(-1)^g$.
\end{proof}

The advantage of Proposition \ref{Prop:Hol}
is that all the inputs are explicit. Proposition \ref{Prop:SecondShift} is 
more general (and determines the full $r$
dependence of $\mathcal{W}_{g; a_1, \dots, a_n}(r)$), but
involves the polynomials $P(r,a )$ for which we know 
no closed formula.

\begin{remark}
The class $[\mathcal{H}_g(a_1,\ldots,a_n)]\in H^{2(g-1)}(\cM_{g,n})$ on the
moduli of nonsingular curves is easily determined by
the classical Thom-Porteous formula{\footnote{See, for example, 
\cite[Section 2]{Chen} for
the Thom-Porteous approach.}} to be
\begin{equation}\label{tptp}
c_{g-1}\left( 
R^1 \pi_*\omega\left(-\sum_{i=1}^n a_i \mathsf{p}_i\right) - R^0\pi_* \omega\left(-\sum_{i=1}^n a_i \mathsf{p}_i\right)\right)\, ,
\end{equation}
where $\pi:\mathcal{C} \rightarrow \cM_{g,n}$ is the universal curve.
We have checked that the restriction of 
Proposition~\ref{Prop:Hol} to $\cM_{g,n}$ agrees with
\eqref{tptp} calculated by
Grothendieck-Riemann-Roch. As a result, the restriction of Conjecture A.1
to $\cM_{g,n}$ is correct.

\end{remark}

\end{appendix}

\vspace{+16 pt}
\noindent Departement Mathematik, ETH Z\"urich\\ 
\noindent rahul@math.ethz.ch

\vspace{+8 pt}
\noindent
Department of Mathematics, MIT\\
apixton@mit.edu

\vspace{+8 pt}
\noindent
CNRS, Institut Math\'ematique de Jussieu\\
dimitri.zvonkine@ump-prg.fr

\end{document}